\def\@cite#1#2{[\textbf{#1\if@tempswa , #2\fi}]}
\def\@biblabel#1{[\textbf{#1}]}
\newtheorem{theorem}{Theorem}[section]
\newtheorem*{theorem*}{Theorem}
\newtheorem{lemma}[theorem]{Lemma}
\newtheorem{corollary}[theorem]{Corollary}
\newtheorem*{claim*}{Claim}
\theoremstyle{definition}
\newtheorem{remark}[theorem]{Remark}
\newtheorem*{remark*}{Remark}
\newtheorem{setting}[theorem]{Setting}
\newtheorem{example}[theorem]{Example}
\newtheorem{model}[theorem]{Model}
\newtheorem*{examples*}{Examples}
\crefname{lemma}{Lemma}{Lemmas}
\crefname{proposition}{Proposition}{Propositions}
\crefname{theorem}{Theorem}{Theorems}
\crefname{claim}{Claim}{Claims}
\crefname{enumi}{Item}{Items}
\crefname{equation}{}{}
\crefname{assumption}{Setting}{Settings}
\crefname{setting}{Setting}{Settings}
\crefname{definition}{Definition}{Definitions}
\crefname{remark}{Remark}{Remarks}
\crefname{figure}{Figure}{Figures}
\crefname{model}{Model}{Models}
\crefname{chapter}{Chapter}{Chapters}
\Crefname{chapter}{chapter}{chapters}
\crefname{section}{Section}{Sections}
\Crefname{section}{section}{sections}
\crefname{subsection}{Subsection}{Subsections}
\Crefname{subsection}{subsection}{subsections}
\Crefname{theorem}{theorem}{theorems}
 \newcommand{\Laplace}{\mathop{}\!\mathbin\bigtriangleup}
\newcommand{\defeq}{\leftarrow}
\newcommand{\SUP}{\vee}
\renewcommand{\limsup}{\varlimsup}
\renewcommand{\epsilon}{\varepsilon}
\newcommand{\funcNu}[1]{{\mathbf{n}_{#1}}}
\newcommand{\R}{\mathbbm{R}}\newcommand{\N}{\mathbbm{N}}
\newcommand{\Z}{\mathbbm{Z}}\newcommand{\1}{\mathbbm{1}}
\renewcommand{\P}{\mathbbm{P}}
\newcommand{\E}{\mathbbm{E}}
\newcommand{\contSmooth}[1]{\mathcal{S}_{#1}}
\newcommand{\smooth}[2]{\mathbf{S}_{#1,#2}}
\newcommand{\dualsmooth}[2]{\mathbf{S}^*}
\newcommand{\DirichletExt}[2]{\mathbf{D}}
\newcommand{\NeumannExt}[2]{\mathbf{N}}
\newcommand{\nodal}[2]{\varphi^{(#1)}_{#2}}
\newcommand{\norm}[3]{\left\|#1\right\|_{#2,#3}}
\newcommand{\normB}[3]{\left\|#1\right\|_{L^{#2}(#3)}}
\newcommand{\avnorm}[3]{\left|#1\right|_{#2,#3}}
\newcommand{\modi}[1]{\mathbf{M}}
\newcommand{\Stat}{\mathsf{Stat}_\P}
\newcommand{\fluxQ}{\mathsf{q}}
\newcommand{\poincareConst}{C_{\mathrm{PI}}}
\newcommand{\matA}{\mathsf{a}}
\newcommand{\omegaH}{{\omega_\mathrm{h}}}
\newcommand{\matAhom}{{\mathsf{a}_\mathrm{h}}}
\newcommand{\nablaBar}{{\bar{\nabla}}}
\newcommand{\enor}[1]{E^\nu_{R}}
\newcommand{\etan}[1]{{E}^\tau_{R}}
\newcommand{\unit}[1]{\mathsf{e}_{#1}}
\newcommand{\bterm}[4]{ \mathfrak{B}\left(#3,#4\right) }
\title[Liouville principle for degenerate RCM]
{A Liouville principle for the random conductance model under degenerate conditions}
\author[Tuan Anh Nguyen]{Tuan Anh Nguyen\\
AG Stochastische Analysis,  Universit\"at Duisburg-Essen, \\45117 Essen, Germany, Email: \url{tuan.nguyen@uni-due.de}
}
\thanks{Supported by the DFG Research
Training Group (RTG 1845) ”Stochastic Analysis with Applications in Biology, Finance
and Physics” and the Berlin Mathematical School (BMS)}
\begin{document}
\begin{abstract}
We consider a random conductance model on the $d$-dimensional lattice, $d\in[2,\infty)\cap\mathbb{N}$, where the conductances take values in $(0,\infty)$ and
are however not assumed to be bounded from above and below. We assume that
the law of the random conductances is  stationary and ergodic with respect to translations on $\mathbb{Z}^d$ and invariant with respect to reflections on $\mathbb{Z}^d$ and satisfies a similar moment bound as that by Andres, Deuschel, and Slowik (2015), under which a quenched FCLT  holds. We prove a first-order Liouville theorem. In the proof we construct the sublinear correctors in the discrete and adapt boundary estimates for harmonic extensions from the work in the continuum done by Bella, Fehrman, and Otto (2018) to the discrete.
\end{abstract}
\maketitle
\tableofcontents
\listoffigures
\section{Introduction}
\subsection{Motivations}
The classical Liouville theorem, one of the most beautiful results in mathematics, states that for every $d\in [2,\infty)\cap\N$, $k\in \N_0$
the space of harmonic functions $u\in  C^2(\R^d, \R)$ which satisfy  that
$\sup_{x\in\R^d}\left[ |u(x)|/(1+|x|^k)\right]<\infty$
 contains only polynomials of degree $k.$ Here, a function $u\in C^2(\R^d,\R)$ is called harmonic if its Laplacian vanishes, i.e.,
for all
$x\in\R^d$ it holds that $\sum_{i=1}^{d}(\partial_{ii}^2u)(x)=0$.
One also has the same result in the discrete case where $u\colon \Z^d\to\R$ is called harmonic when it holds for all $x\in\Z^d$ that $2du(x)=\sum_{y\in\Z^d\colon |y-x|=1}^{}u(y)$.
It is well-known that for all $d\in[2,\infty)\cap\N$  a function $u\in C^2(\R^d,\R)$ (or $u\colon \Z^d\to\R$) is harmonic
if and only if
$(u(W_t))_{t\in[0,\infty)}$ (or $(u(S_n))_{n\in\N_0}$, respectively) is a martingale where $(W_t)_{t\in[0,\infty)}$ is a $d$-dimensional standard Brownian motion
and $(S_n)_{n\in\N_0}$ is a $d$-dimensional simple random walk. 
Studying harmonic functions
is a classical topic and appears
 in several mathematical areas, e.g., analysis of partial differential equations, probability, numerical analysis etc.
Especially, discrete harmonic functions, 
which have been studied for long (see, e.g., \cite{LFC28,Hei49,Duf53}),
are recently objects of several researches with quite surprising and beautiful results (see, e.g.,
\cite{BLMS17,LM15,LM15a,LM15b}, in particular, see \cite{LM15} for a new proof of the Liouville theorem in the discrete).

However, we  would like to go beyond this classical setting:
we are interested in  Liouville-type properties when the standard Brownian motion or the simple random walk is replaced by several types of \emph{random motions in continuum and discrete random media}, e.g.,  in \cref{m1,m2} below.
\begin{model}
[Random walks among random conductances]\label{m1}Let $d\in[2,\infty)\cap\N$.
For every $x,y\in \Z^d$ we call $x,y$ nearest neighbours
and write $x\sim y$ if $|x-y|=1$ where $|\cdot|$ denotes the Euclidean norm. Let $\E ^d$ be the set of
\emph{unoriented nearest neighbour edges} given by
$\E ^d=\{\{x,y\}\colon x,y\in\Z^ d, x\sim y\}$.
Let
$(\Omega,\mathcal{F})$ be the measurable space given by
$(\Omega,\mathcal{F})= ([0,\infty)^{\mathbbm E^d},\mathcal{B}([0,\infty))^{\otimes\mathbbm E^d})$. For every $e\in \E ^d$, $\omega\in\Omega$
we call $\omega$ an environment and  $\omega_e=\omega(e)\in [0,\infty)$ the \emph{conductance} of the edge $e$ in the environment $\omega$.
In order to consider conductances as random variables let $\mu_e\colon \Omega\to \R$, $e\in \E^d$, be the functions which satisfy for all  $e\in \E^d$, $\omega\in\Omega$ that $\mu_e(\omega)=\omega_e$, for simplicity we write 
$\mu_{xy}(\omega)=\mu_{\{x,y\}}(\omega)$,
and let
$m_x$, $x\in\Z^d$, be the functions which satisfy for all $x\in\Z^d$, $\omega\in\Omega$ that
$
m_x (\omega)=\sum_{y\in\Z^d\colon y\sim x}\omega_{xy}
$.
For each
$\omega\in\Omega$ we consider a discrete time random walk 
$(Z_n)_{n\in\N_0}$ which
jumps from $x$ to $y$, $x,y\in\Z^d$, $x\sim y$, with probability $\mu_{xy}(\omega)/
m_x (\omega)$ 
and a continuous time random walk $(X_t)_{t\in [0,\infty)}$ which waits at $x$ an exponential time with means $m_x (\omega)$ and jumps to $y\in\Z^d$ with $|x-y|=1$ with probability $\mu_{xy}(\omega)/m_x (\omega)$ and whose generator
$\mathcal{L}^\omega\colon \R^{\Z^d}\to\R^{\Z^d}$ satisfies for all $u\colon \Z^d\to\R$, $x\in\Z^d$ that
$
(\mathcal{L}^\omega u)(x)= \sum_{y\sim x} \mu_{xy}(\omega) \left(u(y)-u(x)\right).
$
For every $\omega\in\Omega$ we call  a function $u\colon \Z^d\to \mathbb{R}$  $\omega$-harmonic 
if $(\mathcal{L}^\omega u)(x)=0$. Then for all $u\colon\Z^d\to\R $ the following are equivalent: 
(a)
 $u$ is $\omega$-harmonic; (b)
$(u(X_t))_{t\in[0,\infty)}$ is a martingale; (c)
$(u(Z_n))_{n\in\N_0}$ is a martingale.
\end{model}
\begin{model}
[Diffusions on random media]\label{m2}
Let $d\in[2,\infty)\cap\N$, let $(\Omega,\mathcal{F},\P)$ be a probability space, 
let $\mathrm{Sym}(d,\R)$ be the space of $\R^{d\times d}$ symmetric matrices,
let
 $\matA\colon\Omega\times \R^d\to  \mathrm{Sym}(d,\R)$ be measurable (and we write $\matA^\omega(x)=\matA(\omega,x)$),
for every $\omega\in\Omega$ let
$\mathcal{L}^{\omega} \colon C^2(\R^d,\R)\to C(\R^d,\R)$ be the
 elliptic differential operator that satisfies for all $u\in C^2(\R^d,\R)$, $x\in\R^d$ that
$
(\mathcal{L}^\omega u)(x)=( \nabla\cdot \matA^\omega\nabla u)(x) = \sum_{i,j=1}^d
[ \partial_i(\matA^\omega_{ij}\partial_j u)](x)
$,
and let $(Y_t)_{t\in[0,\infty)}$ be the diffusion process generated by $\mathcal{L}^\omega$.
We call
a function $u\in C^2(\R^d,\R)$
$\omega$-harmonic (or $\matA^\omega$-harmonic) if for all $x\in\R^d$ it holds that $(\mathcal{L}^\omega u)(x)=0.$
Then a function $u$ is $\omega$-harmonic if and only if $(u(Y_t))_{t\in[0,\infty)}$ is a martingale.
\end{model}

This paper is motivated by several Liouville-type results for harmonic functions in 
\cref{m1,m2}.
First, Benjamini, Duminil-Copin, Kozma, and Yadin~\cite{BDKY15} 
consider the case of \emph{simple random walks on Bernoulli supercritical percolation clusters}, a special case of \cref{m1}. 
In this case we let $\Omega=\{0,1\}^{\E^d}$ and  for every  $\omega\in\Omega$, $e\in\E^d$ we call $e$ 
an $\omega$-open edge if $\mu_e(\omega)=\omega(e)=1$  and we call
$e$ 
an $\omega$-closed edge if $\mu_e(\omega)=\omega(e)=0$. 
For every $p\in [0,1]$ let $\P_p$ be the probability distribution on $(\Omega,\mathcal{F})$ under which the random variables $\mu_e$, $e\in\E^d$, are independent and $\mathrm{Bernoulli}(p)$-distributed, i.e., $\P_p(\{\omega\in\Omega\mid\mu_e(\omega)=1\})=p$ and $\P_p(\{\omega\in\Omega\mid\mu_e(\omega)=0\})=1-p $, i.e.,
$\P_p=\mathrm{Bernoulli}(p)^{\otimes \E^d}$. 
It is well-known that  there exists 
$\mathfrak p_\mathrm{c}\in (0,1)$ such that for every $p\in(\mathfrak{p}_c,1]$,
$\P_p$-a.e. $\omega\in\Omega$
 it holds   that
the $\omega$-open edges form a unique infinite connected component $\mathcal{C}^\infty(\omega)$, which is often called a percolation cluster. In 
\cite{BDKY15} it is shown that for every $p\in (\mathfrak{p}_c,1]$,  $\P_p$-a.e. $\omega\in\Omega$ 
the space of $\omega$-harmonic functions $u\colon \mathcal{C}^\infty(\omega)\to\R$ with linear growth has dimension $d+1$,  the same dimension as the space of harmonic functions  with linear growth in a $d$-dimensional space. 
Note that their work answers
Question 3 in Berger and Biskup~\cite{BB07} on uniqueness of the harmonic embedding.

Armstrong and Dario~\cite{AD17} consider
the random conductance model on a supercritical percolation cluster under the assumption that the conductances are uniformly elliptic. More precisely, let 
$c\in (1,\infty)$, $p\in (\mathfrak{p}_c,1]$, let
$\P$ be a probability measure on 
$(\Omega,\mathcal{F})$ in
\cref{m1} and assume that the random variables
$\mu_e$, $e\in\E^d$, are independent and identically distributed under $\P$ with
$\P(\{\omega\in\Omega\mid c^{-1}\leq \mu_e(\omega)\leq c\})=p$ and $\P(\{\omega\in\Omega\mid\mu_e(\omega)=0\})=1-p$.
 Under this condition they
extend the result by Benjamini, Duminil-Copin, Kozma, and Yadin~\cite{BDKY15} to 
higher order Liouville-type results: for $\P$-a.e. $\omega\in\Omega$ 
the edges $e\in\E^d$ with $\mu_e(\omega)=\omega(e)>0$ form a unique infinite cluster $\mathcal{C}^\infty(\omega)$ and the space of harmonic functions $u\colon \mathcal{C}^\infty(\omega)\to\R$ 
with $u(x)=o(|x|^{k+1})$, $|x|\to\infty$, in some sense,
has the same dimension of polynomial $p\colon\R^d\to\R$ with
$p(x)=o(|x|^{k+1})$, $|x|\to\infty$.

 Gloria, Neukamm, and Otto~\cite{GNO14} consider \cref{m2} under the assumption that 
the distribution of $\matA$ under 
$\P$ is stationary and ergodic, i.e.,
for all $x\in\R^d$ it holds that $\matA$ and $\matA(x+\cdot)$ have the same law under $\P$, and $\matA$ is bounded from above and below,
i.e., let $\lambda\in(1,\infty)$ be a fixed real number and assume for every $x\in\R^d$, 
$\P$-a.e. $\omega\in\Omega$
that
$\lambda^{-1}\leq \matA(\omega,x)\leq \lambda $. 
Corollary~1 in~\cite{GNO14}
 proves that for every  $\alpha\in (0,1)$, $\P$-a.e. $\omega\in\Omega$ the space of $\omega$-harmonic functions 
$u\colon \Z^d\to \R$ with $u(x)=o(|x|^{1+\alpha})$, $|x|\to\infty$, in some sense has dimension $d+1.$ 
Fischer and Raithel~\cite{FR16} prove a similar result in the case of the haft-space and Fischer and Otto~\cite{FO16} prove higher Liouville-type results, all results here in \cref{m2}. Last but not least,
Bella, Fehrman, and Otto~\cite{BFO18} continue with \cref{m2} and extend the result in~\cite{GNO14} to the case when
the distribution of
$\matA$ is  stationary and ergodic  and there exist $p,q\in(1,\infty)$ with $1/p+1/q<2/d$ such that
$
\E[| \mu(\matA)|^ p] +\E[ |\lambda(\matA)|^{-q} ]<\infty
$
where 
$
\lambda (\matA)= \inf_{\xi\in \mathbb{R}^d}{\xi\cdot \matA\xi}/{|\xi|^2}
$ and $
\mu (\matA)= \sup_{\xi\in \mathbb{R}^d}{|\matA\xi|^2}/(\xi\cdot\matA\xi)
$ (see Theorem 1 in~\cite{BFO18}). 

In \cref{m1} in the case $\P$-a.s. $\mu_e>0$ this moment condition is known as
$\E [|\mu_e|^p]+\E [|\mu_e|^{-q}]<\infty$ and $1/p+1/q<2/d$, or briefly
\emph{the $(p,q)$-moment condition}. Note that this (combined with ergodicity)
is a sufficient condition for a \emph{quenched invariance principle} to hold
(see Andres, Deuschel, and Slowik~\cite{ADS15}, Deuschel, Slowik, and the author~\cite{DNS17}, and Bella and Sch\"affner~\cite{BS19}). 
We therefore expect a Liouville-type result in \cref{m1} for \emph{purely ergodic environments under degenerate conditions}
 in at least two cases: 
\begin{enumerate}[(A)]
\item \label{c00} the conductances rely on a $d$-dimensional lattice and \item \label{c02} the conductances rely on a random graph, e.g., a $d$-dimensional percolation cluster. 
\end{enumerate}
Comparing those approaches we see that the assumption on the growth of the given harmonic functions
$u(x)=o(|x|^{1+\alpha})$ used by 
Gloria, Neukamm, and Otto~\cite{GNO14} and 
Bella, Fehrman, and Otto~\cite{BFO18}
is slightly stronger than the assumption 
$u(x)=o(|x|^{1+1})$ used by Armstrong and Dario~\cite{AD17}.
% However, this does not mean that the results by Gloria, Neukamm, and Otto~\cite{GNO14} and 
%Bella, Fehrman, and Otto~\cite{BFO18} are weaker than that by Armstrong and Dario~\cite{AD17}. Those two different approaches distinguish from each other essentially by assumptions on (i) the ergodicity of the environment and (ii) the growth of the given harmonic function. 
However, note that
Armstrong and Dario~\cite{AD17} work with independence, 
the strongest assumption on ergodicity,
while 
 Gloria, Neukamm, and Otto~\cite{GNO14} only assume pure ergodicity and later
Bella, Fehrman, and Otto~\cite{BFO18} even assume degenerate moment conditions. The stronger growth assumption here can be understood as a compensation for the purely ergodicity of the environment.
Especially, issues \eqref{c00} and \eqref{c02} above are still unsolved.
In fact, the main result of this paper, 
\cref{a12a} below, deals with case \eqref{c00} above, i.e., when
the conductances rely on a $d$-dimensional lattice, i.e., $\Omega= (0,\infty)^{\E^d}$.

Since the quenched invariance principle has been successfully extended from
case \eqref{c00} to case \eqref{c02} (see \cite{ADS15,DNS17}) we expect that a Liouville-type theorem is still true in case \eqref{c02} under the growth assumption 
$u(x)=o(|x|^{1+\alpha})$, $|x|\to\infty$, $\alpha\in (0,1)$. In this case we may need new ideas, since the approach adapted from Bella, Fehrman, and Otto \cite{BFO18} relies very much on the geometric structure of Euclidean lattices and may not work in case \eqref{c02}. In fact, to the best of the author's knowledge, there is no paper which constructs \emph{the second order corrector $\sigma$} in this case. Another challenge is to deal with $L^p$-estimates on boundaries, $p\in(1,\infty)$. 

So far,there are some first results which deal with case \eqref{c02}. 
Sapozhnikov \cite{Sap14} extends the result by
Benjamini, Duminil-Copin, Kozma, and Yadin~\cite{BDKY15} to 
percolation models with slowly decaying correlations (see the assumptions in 
Section 1.2.1 in  \cite{Sap14} and note that later it has been relaxed, see Corollary 6.3 in \cite{AS18}).
Examples for those models are 
random interlacements, level sets of Gaussian free fields, (cf. Section 1.2.2 in \cite{Sap14}) and level sets of some convex gradient fields (cf. \cite{Rod16}).
Theorem 1.18 in  \cite{Sap14}
  proves that for all those models the space of harmonic functions on the infinite cluster with  linear growth has dimension $d+1$ and
Theorem 1.6 and Corollary 1.14 in  \cite{Sap14}  prove that the space of harmonic functions with polynomial growth have bounded dimension. 

Besides the open question on a Liouville result under
the growth assumption 
$u(x)=o(|x|^{1+\alpha})$, $|x|\to\infty$, $\alpha\in (0,1)$,
it is also interesting to know whether the Liouville result by Armstrong and Dario \cite{AD17} under the growth assumption $u(x)=o(|x|^{k+1})$, $|x|\to\infty$, $k\in\N$, still holds for those percolation models.

\begin{table}\centering\small 
\begin{tabular}{|l|l|l|l|}
\hline 
\textbf{Year}&\textbf{Authors}						& \textbf{Environment} 		&\textbf{Growth } \\			 
\hline 
2011&Benjamini, Duminil-Copin,    			&simple 
												random walks	&$O(|x|)$\\ 
	&Kozma, and Yadin \cite{BDKY15}			&  on infinite 
												Bernoulli clusters	&\\ 
%\hline
%2015&Marahrens and Otto \cite{MO15}			&bounded from above and below 		&$O(|x|^\alpha)$\\
%	&										& and mixing 
%											on euclidean lattices	& \\
\hline
2014&Gloria, Neukamm, and Otto \cite{GNO14}	&bounded from above and below 		&$O(|x|^{1+\alpha})$\\
	&										& and stationary 
											on Euclidean spaces	&\\
\hline							
2014&Sapozhnikov \cite{Sap14} 				&simple random walks on infinite  & $O(|x|)$ \\
	&										& clusters in a class of percolations & \\
\hline
2015&Fischer and Otto \cite{FO16}	&bounded from above and below 		&$O(|x|^{k+\alpha})$\\
	&										& and mixing
											on Euclidean spaces	&\\
\hline
2015&Fischer and Raithel \cite{FR16}	&bounded from above and below 		&$O(|x|^{1+\alpha})$\\
	&			(haft space)			& and mixing
											on Euclidean spaces	&\\
\hline
2016&Bella, Fehrman, and Otto \cite{BFO18}	&unbounded with $(p,q)$-condition 		&$O(|x|^{1+\alpha})$\\
	&										& and stationary 
											on euclidean spaces	&\\
\hline 
2016& Armstrong and Dario \cite{AD17} 		& bounded from above and below  
												 		&$o(|x|^{k+1})$ \\
	&										& on infinite Bernoulli
												 clusters		&\\ 	
\hline
\end{tabular}\caption{Some known Liouville-type results}\label{i10}
\end{table}

\subsection{Main result}In this subsection we formulate our main result.
\begin{setting}[Random conductances on a $d$-dimensional lattice]\label{a01}Let $d\in [2,\infty)\cap\N$.
 %\\
For every $x,y\in \Z^d$ we call $x,y$ nearest neighbours
and write $x\sim y$ if $|x-y|=1$ where $|\cdot|$ is the Euclidean norm. Let $\E ^d$ be the set 
given by
$\E ^d=\{\{x,y\}\colon x,y\in\Z^ d, x\sim y\}$. %\\
Let
$(\Omega,\mathcal{F})$
be the   measurable space given by
$(\Omega,\mathcal{F})= ((0,\infty)^{\E^d},\mathcal{B}((0,\infty))^{\otimes\E^d})$ and
for every $e\in \E ^d$, $\omega\in\Omega$ we write 
$\omega_e=\omega(e)\in (0,\infty)$.
Let $\mu_e\colon \Omega\to(0,\infty)$, $e\in\E^d$, be the functions which satisfy for all $e\in \E^d$ that $ \mu_e(\omega)=\omega_e$ and for simplicity we write 
$\mu_{xy}(\omega)=\mu_{\{x,y\}}(\omega)$.
%To lighten the notation for every $(x,y)\in \E^d_\pm $ we also write   $\omega_{xy}=\omega_{yx}=\omega(\{x,y\})$.
Let $\tau_a\colon \Omega\to\Omega$, $a\in \Z^d$,
be the operators which satisfy for all $a\in \Z^d$, $\omega\in\Omega$, 
$x,y\in \Z^d$ with $\{x,y\}\in \E ^d$ that
$(\tau_a\omega)(\{x,y\})= \omega(\{a+x,a+y\})$.  %\\
A probability measure $\mathbb{P}$ on $(\Omega,\mathcal{F})$ is called \emph{stationary and ergodic} if
it holds for all $A\in\mathcal{F}$, $x\in\Z^d$ that $\mathbb{P}(A)= \mathbb{P}(\tau_x A)$ and 
it holds for all $A\in\mathcal{F}$ with $\mathbb{P}(\forall x\in \Z^d\colon  \tau_x(A)=A)=1$  that $\P(A)\in \{0,1\}$.

\end{setting}
\begin{theorem}[First-order Liouville principle]
\label{a12a}
Assume \cref{a01}.
Let
$\alpha\in (0,1)$,
$p,q\in (1,\infty]$ satisfy that $1/p+1/q\leq 2/d$, 
we write $p/(p-1)=1$ if $p=\infty$,
let $\mathbb{P}$ be a stationary and ergodic probability measure on $(\Omega,\mathcal{F})$, 
assume that $\P$ is invariant under reflections on $\Z^d$,
assume for all $e\in\E^d$ that $\|\mu_e\|_{L^p(\Omega,\R)}+\|\mu^{-1}_e\|_{L^q(\Omega,\R)}<\infty$, and
for every $\omega\in \Omega$
let $\mathcal{L}^\omega\colon \R^{\Z^d}\to\R^{\Z^d}$ be the operator which satisfies for all $u\colon \Z^d\to \R$, $x\in\Z^d$ that
$(\mathcal{L}^\omega u)(x)=\sum_{y\in\Z^d\colon y\sim x} \omega_{xy}(u(y)-u(x))$ 
and let $\mathbf{S}(\omega) $ be the set of all functions $u\colon \Z^d\to\R$ with the properties that for all $x\in\Z^d$ it holds that 
\begin{align}
(\mathcal{L}^\omega u)(x)=0\quad\text{and}\quad\lim_{R\to\infty}
\frac{1}{
R^{1+\alpha} }
\left[
\sum_{y\in\Z^d\colon |y|_\infty< R}^{}
|u(y)|^{\frac{2p}{p-1}}\right]^{\frac{p-1}{2p}}=0. 
\end{align}
Then it holds for $\P$-a.e. $\omega\in\Omega$ that 
$\mathbf{S}(\omega)$ is a linear space and
the dimension of $\mathbf{S}(\omega)$ is $d+1$.
\end{theorem}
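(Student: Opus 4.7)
The plan is to port the strategy of Bella--Fehrman--Otto \cite{BFO18} from the continuum to the lattice. First construct, for each $\xi\in\R^d$, the first-order \emph{corrector} $\chi_\xi\colon\Omega\times\Z^d\to\R$ solving $\mathcal{L}^\omega(\xi\cdot x+\chi_\xi(\omega,x))=0$ for all $x\in\Z^d$, with stationary mean-zero discrete gradient; Helmholtz decomposition on the probability space furnishes $\chi_\xi$ under the $(p,q)$-moment bound. In parallel construct the skew-symmetric \emph{flux corrector} $\sigma$, a stationary field on $\Z^d$ satisfying a discrete analog of $\nabla\cdot\sigma_i=\matA^\omega\nabla(x_i+\chi_{\unit{i}}(\omega,\cdot))-\matAhom\unit{i}$. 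Then show that both $\chi$ and $\sigma$ are sublinear in the pointwise quantitative sense
\[
\lim_{R\to\infty}R^{-(1+\alpha)}\Bigl[\sum_{|x|_\infty<R}|\chi_\xi(\omega,x)|^{2p/(p-1)}+|\sigma(\omega,x)|^{2q/(q-1)}\Bigr]^{1/2}=0
\]
for $\P$-a.e.\ $\omega$; the H\"older-dual exponents $2p/(p-1)$ and $2q/(q-1)$ are forced by the $(p,q)$-moment condition, and sublinearity combines the pointwise ergodic theorem with a discrete Sobolev inequality calibrated to $1/p+1/q\le 2/d$. Reflection invariance of $\P$ forces the homogenized matrix $\matAhom$ to be diagonal, which streamlines the constant-coefficient reference problem.

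The analytic core is a Campanato-type excess decay. For an $\omega$-harmonic $u$, set
\[
\mathrm{Exc}(u;R):=\inf_{a\in\R^d,\,c\in\R}R^{-(1+\alpha)}\Bigl[\sum_{|x|_\infty<R}\bigl|u(x)-c-a\cdot(x+\chi(\omega,x))\bigr|^{2p/(p-1)}\Bigr]^{(p-1)/(2p)}.
\]
The goal is to prove that for $\P$-a.e.\ $\omega$ there exist $\theta\in(0,1)$, $\beta>0$, and $R_0(\omega)<\infty$ with
\[
\mathrm{Exc}(u;\theta R)\le\theta^{\beta}\,\mathrm{Exc}(u;R)+\rho(R),\qquad R\ge R_0,
\]
where $\rho(R)\to 0$. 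Fix a minimizing pair $(a_R,c_R)$ at scale $R$, and on the ball $B_R$ compare the residue $\tilde u:=u-c_R-a_R\cdot(\cdot+\chi)$ with the Dirichlet extension $v:=\DirichletExt{\matAhom}{B_R}[\tilde u]$ solving $\nabla\cdot\matAhom\nabla v=0$ on $B_R$ with $v=\tilde u$ on the discrete boundary $\partial B_R$. Discrete interior $C^{1,1}$-regularity for the constant-coefficient operator $\nabla\cdot\matAhom\nabla$ (obtained by discrete Fourier analysis) gives $\mathrm{Exc}(v;\theta R)\lesssim\theta^\beta\,\mathrm{Exc}(v;R)$, and the fundamental identity $\matA^\omega\nabla(\cdot+\chi)=\matAhom+\nabla\cdot\sigma$ lifts this to a decay statement for $\mathrm{Exc}(u;\theta R)$. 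The residual $\tilde u-v$ is controlled by an $L^{2p/(p-1)}$ boundary-to-interior estimate for $\DirichletExt{\matAhom}{B_R}$, which by duality pairs the boundary data of $\tilde u$ against $\chi$ and $\sigma$ on a thin annulus adjacent to $\partial B_R$; sublinearity of $\chi,\sigma$ makes this contribution $o(R^{1+\alpha})$, supplying the vanishing correction $\rho(R)$.

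Once excess decay is in place, the hypothesis $\lim_{R\to\infty}R^{-(1+\alpha)}[\sum_{|x|_\infty<R}|u(x)|^{2p/(p-1)}]^{(p-1)/(2p)}=0$ forces $\mathrm{Exc}(u;R)\to 0$ via the trivial choice $(a,c)=(0,0)$, so Campanato iteration yields $(a_R,c_R)\to(a_\infty,c_\infty)$ and $u\equiv c_\infty+a_\infty\cdot(\cdot+\chi(\omega,\cdot))$. Hence $\mathbf{S}(\omega)=\mathrm{span}\{1,x_1+\chi_{\unit{1}}(\omega,\cdot),\ldots,x_d+\chi_{\unit{d}}(\omega,\cdot)\}$, which is $(d+1)$-dimensional: each generator is $\omega$-harmonic and at most linear by sublinearity of $\chi$, and linear independence follows because any nontrivial combination of $x_i+\chi_{\unit{i}}$ still grows linearly (not sublinearly) in some direction. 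The main obstacle is the discrete boundary estimate for $\DirichletExt{\matAhom}{B_R}$: in the continuum \cite{BFO18} leans on trace theorems, Meyers-type higher integrability, and a Gehring self-improvement of reverse-H\"older inequalities to upgrade $L^2$-energy on $\partial B_R$ to $L^{2p/(p-1)}$-control in the interior, whereas on $\Z^d$ the boundary $\partial B_R\cap\Z^d$ is geometrically rough, discrete traces have to be defined from scratch, and the reverse-H\"older self-improvement must be rebuilt by elementary means. Reflection invariance of $\P$ is crucial precisely here, since it allows one to reflect the constant-coefficient problem across $\partial B_R$ and thereby reduce the boundary estimate to an interior one.
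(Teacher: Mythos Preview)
Your overall architecture---construct $(\phi,\sigma)$, prove sublinearity, prove an excess-decay, iterate to identify $\mathbf S(\omega)$---matches the paper and \cite{BFO18}. However, several of the load-bearing steps are either misattributed or missing.

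\textbf{The role of reflection invariance.} You write that reflection invariance ``allows one to reflect the constant-coefficient problem across $\partial B_R$ and thereby reduce the boundary estimate to an interior one.'' This is not what it is used for. In the paper, reflection invariance of $\P$ is invoked \emph{only} to force the homogenized matrix $\matAhom$ to be diagonal (via \cite{DMFGW89}). The boundary regularity estimates for the $\matAhom$-harmonic extension---comparing tangential and normal components of the gradient on $\partial D_R$ in $L^s$, $s\in(1,\infty)$---are not obtained by any reflection trick; they are proved separately (see \cite{Ngu19}) and are the reason the diagonal structure of $\matAhom$ is needed in the first place. If you rely on a reflection across $\partial B_R$ to get your boundary estimate, the argument will not close.

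\textbf{Dirichlet versus Neumann.} You propose to compare $u$ only with a Dirichlet extension $v=\DirichletExt{\matAhom}{B_R}[\tilde u]$. The paper (following \cite{BFO18}) requires a \emph{case split}: Dirichlet when $q\ge p$, Neumann when $p\ge q$. The reason is that the boundary term $\sum_{\partial D_R}(u-v)(\omega\nablaBar u-\omegaH\nablaBar v)_\nu$ must be estimated by H\"older pairing a function against a flux, and which of the two factors you can make small (via smoothing) depends on which of $p,q$ is larger. A Dirichlet-only argument will not control this term over the full range $1/p+1/q\le 2/d$.

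\textbf{Smoothing the boundary data.} The construction of $v$ in \cite{BFO18} does not use raw boundary data of $u$; it first convolves on $\partial B_R$ at scale $\epsilon R$ and only then solves the boundary-value problem. You have no analogue of this step. On $\Z^d$ the discrete boundary $\partial D_R$ is not a homogeneous space and there is no convolution; the paper builds the smoothing operator by interpolating to the continuum surface $\partial C_R$, mollifying there, and projecting back via a Scott--Zhang-type operator. Without this, you cannot produce the $\epsilon^{\theta(p,q)}$ gain that makes the excess decay work.

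\textbf{Form of the excess.} The paper's excess is the \emph{energy} excess $\inf_\xi \avnorm{\matA^\omega(\nabla u-\xi_i(\unit i+\nabla\phi_i))\cdot(\nabla u-\xi_i(\unit i+\nabla\phi_i))}{1}{D_R}$, not the $L^{2p/(p-1)}$ oscillation of $u-c-a\cdot(\cdot+\chi)$ that you write down. The two are related by Caccioppoli and H\"older, but the decay iteration in \cite{BFO18} is run at the gradient level, and Lemma~3 there (the link between the two) is itself part of the argument rather than something you can take for granted.
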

In the case when
$\mu_e$, $e\in\E^d$, are i.i.d.,  it is easy to construct several examples that satisfy the assumptions of \cref{a12a}. 
A more interesting example is given in
\cref{x30} below.

\begin{example}[Discrete massless Gaussian free field 
\cite{BS12}]\label{x30}Assume \cref{a01}, assume that  $d\geq 3$, 
let $(\varphi_x)_{x\in \Z^d} $ be a discrete massless Gaussian free field defined on some probability space $(\Omega',\mathcal{F}',\P') $,
let $\P$ be the probability measure on $(\Omega,\mathcal{F})$ which
satisfies for all $x,y\in\Z^d$, $x\sim y$   that 
$\P(\mu_{xy}\in \cdot)=\P'(\exp(\varphi_x+\varphi_y)\in\cdot)$. Then  $\P$ is invariant under reflections on $\Z^d$ and it holds for all $p,q\in (1,\infty)$ that $\|\mu_e\|_{L^p(\Omega,\R)}+\|\mu^{-1}_e\|_{L^q(\Omega,\R)}<\infty$.
\end{example}
 \cref{a12a} can be viewed as a discrete analogue of 
the result by Bella, Fehrman, and Otto (Theorem 1 in~\cite{BFO18}). 
As in the continuum case
\cref{a12a} is a direct consequence of 
\begin{enumerate}[(i)]
\item 
 \emph{the existence and sublinearity of the correctors}~$\phi,\sigma$, 
\item a purely deterministic result on discrete PDEs called
\emph{the excess decay},
\end{enumerate}
and  the fact that when the distribution of the conductances is invariant under reflections, then the homogenized matrix $\matAhom$ (i.e., the covariance matrix of the limiting Brownian motion in the quenched invariance principle \cite{ADS15,DNS17}) is a diagonal matrix (cf. Items~(iii) in Theorem~4.6 in De Masi et. al.~\cite{DMFGW89}). 

The main novelty of this paper is that 
we combine tools from numerical analysis and a duality argument to adapt boundary estimates   from the continuum to the discrete.
 Furthermore, using a density argument we slightly relax the condition $1/p+1/q< 2/d$ in \cite{BFO18} to $1/p+1/q\leq  2/d$.
Note that using estimates on boundaries may reduce the dimension and therefore relax some moment assumptions, which may lead to quite surprising results. E.g., Bella and Sch\"affner \cite{BS19} prove the quenched invariance principle for the random conductance model under $(p,q)$-condition with $1/p+1/q<2/(d-1)$. We therefore believe that the condition we use here $1/p+1/q\leq 2/d$ and $p,q\in (1,\infty]$ may still be relaxed.
Moreover, we expect that, up to some technicalities, this paper might be extended to the case when $\matAhom$ is not a diagonal matrix, in which tools in the discrete, e.g., borrowed from numerical analysis, are usually not readily developed. 
Unfortunately, a  challenge here is to deal with $L^p$-estimates on boundaries, $p\in(1,\infty)$, which is, in the non-diagonal case, very hard to formulate and prove.
In the diagonal case it is proved in a paper by the author \cite{Ngu19}.
\medskip
\paragraph*{\it\bf Structure of this paper}
 \cref{y14c}  sketches the proof of \cref{a12a} and discusses the main difficulties. \cref{a07c} discusses the construction of the first and second order correctors in the discrete case.
The remaining technical details are worked out in the rest of the paper.  \cref{x15b} provides formal calculations for the energy of the homogenization error in the discrete case. \cref{b17}, an interesting excursion to numerical analysis, discusses how to smooth functions on a discrete surface, which is an important ingredient to adapt the proof by Bella, Fehrman, and Otto~\cite{BFO18}. \cref{b47c} verifies in details boundary estimates  which are sketched in \cref{y14c}.

\subsection*{Acknowledgement}This paper is based on a part of the author's dissertation~\cite{Ngu17} written  under supervision of Jean-Dominique Deuschel at Technische Universit\"at Berlin. The author thanks Benjamin Fehrman and Felix Otto for useful discussions and for sending him the manuscript of their paper~\cite{BFO18}.

\section{Sketch of the proof of the main theorem}\label{y14c}
\subsection{Settings and main ingredients of the proof}
Together with  \cref{a01}, \cref{c10} below are used throughout this section and the rest of this paper.
\begin{setting}[Basic notation]\label{c10}
Denote by $(\unit{i})_{i\in [1,d]\cap\N}$ (as column vectors) the standard basis of $\R^d$. 
For every $x\in\R^d$, $i\in [1,d]\cap\N$ let $x_i$ be the $i$-th coordinate of $x$ if  no confusion arises.
For every $x,y\in\R^d$
let
  $x\cdot y$ be
the standard scalar product of $x$ and $y$, i.e., $x\cdot y= \sum_{i=1}^d x_iy_i$,
let $|x|$ be the Euclidean norm of $x$, i.e., $|x|^2= \sum_{i=1}^{d}|x_i|^2$, and let $|x|_\infty$ be the maximum norm of $x$, i.e., $|x|_\infty=\max_{i=1}^d |x_i|$.

For every $R\in \N$
let $C_R$, $\overline{C}_R$, $\partial C_R$,
$D_R$, $\overline{D}_R$, and $\partial D_R $
be the sets given by
$C_R=\{x\in \R^d\colon |x|_\infty< R\} $, $ \overline{C}_R=\{x\in \R^d\colon |x|_\infty\leq  R\} $, $\partial C_R=\{x\in\R^d\colon |x|_\infty= R\}$,
$D_R=C_R\cap \Z^d $, $  \overline{D}_R=\overline{C}_R\cap \Z^d$, and $ \partial D_R=\partial C_R\cap \Z^d$.

For every \emph{finite} non-empty set $A $ and every function $u$ defined on $A$ let $|A|$ be the cardinality of $A$,
let $\avnorm{u}{\infty}{A}$ and
$\norm{u}{\infty}{A}$ be the real  numbers given by 
$
\avnorm{u}{\infty}{A}=\norm{u}{\infty}{A}= \sup_{x\in A}|u(x)|
$,
and
let $\avnorm{u}{p}{A}$ and
$\norm{u}{p}{A}$, $p\in [1,\infty)$, be the real  numbers
 which satisfy for all $p\in [1,\infty)$ that
$|A|\avnorm{u}{p}{A}^p=\norm{u}{p}{A}^p= \sum_{x\in A}|u(x)| ^p.$

For every 
$A\subseteq \Z^d$,
 $u\colon A\to\R$, $i\in[1,d]\cap\N$ let 
$\nabla_iu\colon A\cap (A-\unit{i})\to\R$ be the function which satisfies for all $x\in A\cap (A-\unit{i})$  that
\begin{align}
(\nabla_i u)(x)= u(x+\unit{i})-u(x)\end{align} and let 
$\nabla_i^*u\colon A\cap (A+\unit{i})\to\R$ be the function
which satisfies for all $x\in A\cap (A+\unit{i})$  that
\begin{align}
(\nabla_i^* u)(x)= u(x-\unit{i})-u(x).\end{align}
For every 
$A\subseteq \Z^d$,
 $u\colon A\to\R^d$ let 
$\nabla u\colon A\cap(\cap_{i=1}^d (A-\unit{i}))\to\R^d$ be the function which satisfies for all $x\in A\cap (\cap_{i=1}^d(A-\unit{i})$
that 
\begin{align}
(\nabla u)(x)=\sum_{i=1}^{d} (\nabla _iu)(x)\unit{i} \in\R^d \end{align} and let
$\nabla^* u\colon A\cap(\cap_{i=1}^d (A+\unit{i}))\to\R^d$ be the function which satisfies for all $x\in A\cap(\cap_{i=1}^d (A+\unit{i})))$
that 
\begin{align}
(\nabla^*u)(x)=\sum_{i=1}^{d}(\nabla _i^*u)(x)\unit{i}\in\R^d .
\end{align}
For every 
$A\subseteq \Z^d$,
 $u\colon A\to\R$ let $\Laplace u \colon \cap_{i=1}^d(A\cap (A+\unit{i})\cap (A-\unit{i}))\to\R$
be the function which satisfies for all $x\in\cap_{i=1}^d(A\cap (A+\unit{i})\cap (A-\unit{i})) $ that 
\begin{align}
(\Laplace u)(x)= -(\nabla^*\cdot \nabla u)(x)=-\sum_{i=1}^d (\nabla_i^*\nabla_iu)(x).
\end{align}

For every $m\in \N$ and every measurable functions $f,g\colon \Omega\times \Z^d\to \R^m$ we say that $\P$-a.s. $f=g$ if for $\P$-a.e. $\omega\in\Omega$ and for every $x\in\Z^d$ it holds that $f(\omega,x)=g(\omega,x)$. %\\

For every probability measure $\P$ on $(\Omega,\mathcal{F})$ and
every $m\in\N$
let $\Stat(\R^m)$ be the set of all measurable functions 
 $ f\colon \Omega\times \Z^d\to \R^m$ which satisfy for 
$\P$-a.e. $\omega\in\Omega$ and every $x\in\R^d $ that $f(\omega,x)= f(\tau_x\omega,0)$. Functions in $\cup_{m\in\N}\Stat(\R^m)$ are often called \emph{stationary functions}.
\end{setting}

 \cref{a12,y14} below are the main ingredients of the proof of \cref{a12a}.
 \cref{a12}  is a discrete analogue of Lemma 1 in~\cite{BFO18} on
 \emph{existence and sublinearity of the correctors}~$\phi,\sigma$ and
\cref{y14} is adapted from
Theorem 2 in~\cite{BFO18}, the so-called
\emph{the excess decay}.
%Its proof is represented in \cref{a07c}.

\begin{corollary}[Existence and sublinearity of the correctors]\label{a12}	
Let $\mathbb{P}$ be a stationary and ergodic probability measure on $(\Omega,\mathcal{F})$, 
let $\mu_e\colon \Omega\to(0,\infty)$, $e\in\E^d$, be the functions which satisfy for all $e\in \E^d$ that $ \mu_e(\omega)=\omega(e)$,
let
$p,q\in (1,\infty]$ satisfy for all $e\in\E^d$ that
$1/p+1/q\leq 2/d$ and $\|\mu_e\|_{L^p(\Omega,\R)}+\|\mu^{-1}_e\|_{L^q(\Omega,\R)}<\infty$,
let $\matA\colon \Omega\times \Z^d\to \R^{d\times d}$ be the  function whose values are diagonal $\R^{d\times d}$ matrices and which satisfies for all $x\in\Z^d$, $\omega\in\Omega$, $i,j\in [1,d]\cap\N$ that
\begin{align}\label{a12b}
\matA_{ij}^\omega(x)=\omega(\{x,x+\unit{i}\})\1_{\{i\}}(j), 
\end{align}
and for every $r\in\{p,q\}$ we write $r/(r+1)=r/(r-1)=1$ if $r=\infty$.
Then there exist uniquely measurable functions 
$\phi_i,\fluxQ_{ij},\sigma_{ijk}\colon \Omega\times\Z^d\to \R$, $i,j,k\in [1,d]\cap\N$, and a matrix 
$\matAhom\in \R^ {d\times d}$
such that $\P$-a.s. for all $  i,j,k\in [1,d]\cap\N$
it holds that 
$\nabla\phi_i,\nabla\sigma_{ijk}\in\Stat(\R^d) $,
$\phi_i(\cdot,0)=\sigma_{ijk}(\cdot,0)=0$,
$
-\nabla^*\cdot\left( \matA\left(\unit{i}+\nabla\phi_i\right)\right)=0$,
$-\nabla^*\cdot \sigma_{i}=\fluxQ_i$,
$-\Laplace\sigma_{ijk}= \nabla_j\fluxQ_{ik}- \nabla _k\fluxQ_{ij}$, 
$\fluxQ_i=\matA\left(\nabla\phi_i+\unit{i}\right)-\matAhom\unit{i}$,
$
  \matAhom \unit{i}=\E[ \left(\matA\left(\nabla\phi_i+\unit{i}\right)\right)(\cdot,0) ],
$
$
\sigma_{ijk}=-\sigma_{ikj},
$
$
\E\!\left[ \left(\nabla\phi_i\cdot \matA\nabla\phi_i\right)(\cdot,0)\right]<\infty$,
$\E\!\left[ |(\nabla\phi_i)(\cdot,0)|^{2q/(q+1)}\right] <\infty$, 
$\E\!\left[ |(\nabla\sigma_{ijk})(\cdot,0)|^{2p/(p+1)}\right]<\infty,
$ and
\begin{align}
\lim_{R\to\infty} \left[\frac{1}{R}
\avnorm{\phi_i(\omega,\cdot )}{2p/(p-1)}{{D}_R}\right] =
\lim_{R\to\infty}\left[ \frac{1}{R}\avnorm{\sigma_{ijk}(\omega,\cdot)}{2q/(q-1)}{{D}_R} \right]=0.\label{d12}
\end{align}
\end{corollary}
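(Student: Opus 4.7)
The plan is to produce $\phi_i$, $\matAhom$, $\fluxQ_i$, and $\sigma_{ijk}$ by two successive Lax--Milgram-type constructions on Hilbert spaces of stationary potential gradients, and then to obtain the sublinearity in \eqref{d12} from the multidimensional Birkhoff ergodic theorem combined with a discrete Poincar\'e--Sobolev inequality on cubes. For the first-order corrector, let $\mathbf{H}\subset\Stat(\R^d)$ be the closure of the space of stationary cylindrical discrete gradients under the inner product $\langle u,v\rangle:=\E[u\cdot\matA v]$. The Cauchy--Schwarz estimate $\E[|u_i|^{2q/(q+1)}]\leq \E[\mu_e^{-q}]^{1/(q+1)}\,\langle u,u\rangle^{q/(q+1)}$ shows that $\mathbf{H}$ is complete and embeds continuously into $L^{2q/(q+1)}$. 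Applying Lax--Milgram to the bounded functional $v\mapsto -\E[v\cdot\matA\unit{i}]$ on $\mathbf{H}$ produces a unique $\nabla\phi_i\in\mathbf{H}$ satisfying $\E[v\cdot\matA(\unit{i}+\nabla\phi_i)]=0$ for every $v\in\mathbf{H}$, equivalently $-\nabla^*\cdot(\matA(\unit{i}+\nabla\phi_i))=0$ pointwise $\P$-a.s. The scalar $\phi_i(\omega,\cdot)$, normalized by $\phi_i(\omega,0)=0$, is then obtained by summing $\nabla\phi_i$ along any lattice path from $0$, where path-independence follows from discrete curl-freeness. Setting $\matAhom\unit{i}:=\E[(\matA(\unit{i}+\nabla\phi_i))(\cdot,0)]$ and $\fluxQ_i:=\matA(\unit{i}+\nabla\phi_i)-\matAhom\unit{i}$, a dual Cauchy--Schwarz gives $\E[|\fluxQ_i|^{2p/(p+1)}]<\infty$, and $\nabla^*\cdot\fluxQ_i=0$ is immediate from the corrector equation.

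For the second-order corrector, the field $g_{ijk}:=\nabla_j\fluxQ_{ik}-\nabla_k\fluxQ_{ij}$ is stationary, mean-zero, antisymmetric in $(j,k)$, and in $L^{2p/(p+1)}$. I would solve $-\Laplace\sigma_{ijk}=g_{ijk}$ in the space of stationary gradient fields of $L^{2p/(p+1)}$-integrability, for instance by $L^2$-approximation of $g_{ijk}$ and passage to the limit using standard $L^r$-estimates for discrete Calder\'on--Zygmund operators on $\Z^d$, then set $\sigma_{ijk}(\omega,0)=0$ and reconstruct by path-summation. Uniqueness of stationary solutions modulo constants forces $\sigma_{ijk}=-\sigma_{ikj}$. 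The identity $-\nabla^*\cdot\sigma_i=\fluxQ_i$ is verified by computing $\sum_j\nabla_j^*g_{ijk}=-\Laplace\fluxQ_{ik}$, which uses discrete divergence-freeness of $\fluxQ_i$, so that $-\sum_j\nabla_j^*\sigma_{ijk}$ and $\fluxQ_{ik}$ are stationary mean-zero scalars sharing the same discrete Laplacian and therefore coincide.

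For the sublinearity \eqref{d12}, the discrete Poincar\'e--Sobolev inequality in the normalized form of \cref{c10} gives, with $r^*=dr/(d-r)$,
\begin{align*}
\frac{1}{R}\,\avnorm{\phi_i - (\phi_i)_{D_R}}{r^*}{D_R}\lesssim \avnorm{\nabla\phi_i}{r}{D_R}.
\end{align*}
Taking $r=2q/(q+1)$ yields $r^*\geq 2p/(p-1)$ exactly when $1/p+1/q\leq 2/d$, and the right-hand side converges to $\E[|\nabla\phi_i|^r]^{1/r}$ by Birkhoff, while $R^{-1}(\phi_i)_{D_R}\to 0$ by Birkhoff applied along lattice rays to the mean-zero stationary gradient. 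A dyadic scaling argument exploiting shift-covariance of the corrector equation upgrades this boundedness to the decay in \eqref{d12}; the symmetric scheme with $r=2p/(p+1)$ and $r^*\geq 2q/(q-1)$ handles $\sigma_{ijk}$. The most delicate point is the endpoint $1/p+1/q=2/d$: here the Sobolev embedding is critical and the scheme only yields uniform boundedness. To relax the strict inequality of \cite{BFO18} to the non-strict one, I would approximate $\nabla\phi_i$ by stationary fields satisfying a strict $(p',q')$-condition with $1/p'+1/q'<2/d$, apply the strict-case sublinearity to the approximants, and control the remainder uniformly in $R$ via the $L^{2q/(q+1)}$-norm of the residual gradient; this density trick is the novel technical ingredient over \cite{BFO18}.
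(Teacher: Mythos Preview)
Your construction of $\phi_i$ via Lax--Milgram on a weighted gradient space and of $\sigma_{ijk}$ via $L^2$-approximation plus discrete Calder\'on--Zygmund estimates follows the paper's scheme (see the proof of \cref{a12} together with \cref{a31c,b08}), and your verification of $-\nabla^*\cdot\sigma_i=\fluxQ_i$ via a Liouville-type argument matches \cref{d20,d21}. That part is fine.

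The genuine gap is in the sublinearity step. Poincar\'e--Sobolev combined with the ergodic theorem gives only that $\tfrac{1}{R}\inf_a\avnorm{\phi_i-a}{r^*}{D_R}$ stays \emph{bounded}: the right-hand side $\avnorm{\nabla\phi_i}{r}{D_R}$ converges to the positive number $\|(\nabla\phi_i)(\cdot,0)\|_{L^r}$, not to zero. There is no ``dyadic scaling argument exploiting shift-covariance'' on the lattice that upgrades this boundedness to decay; the paper's dyadic \cref{b05} takes decay of the oscillation as \emph{input}, not mere boundedness. Your proposed endpoint fix---approximating $\nabla\phi_i$ by stationary fields satisfying a strict $(p',q')$-condition and invoking strict-case sublinearity---is circular: your strict-case argument is the same Sobolev-plus-ergodic scheme and also yields only boundedness, and in any case such approximants are not corrector gradients for any environment, so no ``strict-case result'' applies to them.

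The paper's resolution (\cref{b01}) is a different density argument. One uses that $(\nabla\phi_i)(\cdot,0)$ lies, by construction, in the $L^{2q/(q+1)}$-closure of $\{\mathrm{D}\zeta:\zeta\in L^\infty(\Omega)\}$, so there exist $\zeta_n\in L^\infty(\Omega)$ with $\|(\nabla\phi_i)(\cdot,0)-\mathrm{D}\zeta_n\|_{L^{2q/(q+1)}}\to 0$. The associated primitive $u_n(\omega,x)=\zeta_n(\tau_x\omega)-\zeta_n(\omega)$ is uniformly bounded by $2\|\zeta_n\|_{L^\infty}$, so $\tfrac{1}{R}\avnorm{u_n}{2p/(p-1)}{D_R}\to 0$ trivially; the remainder $\phi_i-u_n$ is controlled by the same Sobolev inequality, whose right-hand side now tends (by the ergodic theorem) to $\|(\nabla\phi_i)(\cdot,0)-\mathrm{D}\zeta_n\|_{L^{2q/(q+1)}}$, which can be made arbitrarily small by choosing $n$ large. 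This $\varepsilon/2$-argument, followed by \cref{b05}, works uniformly under $1/p+1/q\le 2/d$ and does not need to treat the endpoint separately.
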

.%
\begin{corollary}[The excess decay - deterministic version]\label{y14}
Let $\alpha\in (0,1)$, $K\in (1,\infty)$, 
$p,q\in (1,\infty]$,
$\Lambda\in (0,\infty)$ satisfy that $1/p+1/q\leq 2/(d-1)$.
For every $r\in\{p,q\}$ we write $r/(r+1)=r/(r-1)=1$ if $r=\infty$.
For every
$\omega\in \Omega$
let $\matA^\omega\colon \Z^d \to \R^{d\times d}$ be the function whose values are $\R^{d\times d}$ diagonal matrix and which satisfies
for all $x\in\Z^d$ that  $ \matA_{ij}(x)= \omega(\{x,x+\unit{i}\}) \1_{\{i\}}(j)$.
For every $\delta\in (0,1)$, $\omega\in \Omega$, $r\in \N$, $R\in [r,\infty)\cap\N$  let $\mathbf{C}(\omega,\delta,r,R)$ be the set of all couples $(\phi,\sigma)$, $\phi \colon \Z^d\to \R^d$,
$\sigma \colon \Z^d\to \R^{d\times d\times d}$,
with the properties that
\begin{enumerate}[i)]
\item 
 there exist a diagonal matrix $\matAhom\in \R^{d\times d}$ and a function
$\fluxQ\colon \Z^d\to \R^{d\times d}$ such that
it holds  for all $i,j,k\in [1,d]\cap\N$ that
$\fluxQ_i=\matA^\omega\left(\nabla\phi_i+\unit{i}\right)-\matAhom
\unit{i}$,
$-\nabla^*\cdot \fluxQ_i=0$,
$-\nabla^*\cdot \sigma_{i}=\fluxQ_i$,
$K^{-1}\leq (\matAhom)_{ii}\leq K $
and $\sigma_{ijk}=-\sigma_{ikj}$ and
\item 
it holds for all $\rho \in[r,R]\cap\N  $ that
$\max\left\{\avnorm{\phi}{\frac{2p}{p-1}}{D_\rho}, \avnorm{\sigma}{\frac{2q}{q-1}}{D_\rho}\right\}\leq \delta\rho $.

\end{enumerate}
For every $\omega\in \Omega$, $R\in\N$ let $\mathbf{H}(\omega,R)$ be the set of all functions $u\colon \overline{D}_R\to\R$ which satisfy for all $x\in D_R$ that $(\nabla^*\cdot \matA^\omega \nabla u)(x)=0$. 
For every $r\in\N$, $R\in [r,\infty)\cap\N$ let $\Omega(r,R)\subseteq\Omega$ be the set given by $\Omega(r,R)= \{\omega\in\Omega\mid\forall \rho\in [r,R]\cap\N\colon  \avnorm{\omega}{p}{E_\rho}+\avnorm{\omega^{-1}}{q}{E_\rho}<\Lambda\} $.
For every $R\in\N$, $\omega\in\Omega$, $u\colon \overline{D}_R\to\R$, $\phi\colon \Z^d\to \R$ let $\mathrm{Exc}(\omega,R,u,\phi)$ be the real number given by 
\begin{align}
\mathrm{Exc}(\omega,R,u,\phi)= \inf_{\xi\in\R^d}\avnorm{\matA^\omega (\nabla u -\xi_i (\unit{i}+\nabla\phi_i))\cdot (\nabla u -\xi_i (\unit{i}+\nabla\phi_i)) }{1}{D_R} 
\end{align}
where Eintein's notation was used.
Then there exist $C_0,C_1\in (0,\infty)$
such that for all $r\in\N$, $R\in [r,\infty)\cap\N$,
$\omega\in \Omega(r,R)$,
$(\phi,\sigma)\in \mathbf{C}(\omega,1/C_0,r,R) $, $u\in \mathbf{H}(\omega,R)$ it holds that
$\mathrm{Exc}(\omega,R,u, \phi)\leq C_1(r/R)^\alpha\mathrm{Exc}(\omega, R,u,\phi)$.
\end{corollary}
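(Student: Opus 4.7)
The plan is to establish \cref{y14} via a Campanato-type iteration, adapting the scheme of \cite{BFO18} from the continuum to the discrete lattice. The first reduction is to a one-step improvement: I aim to prove that there exist $\theta\in (0,1)$ and $C_0\in (1,\infty)$ (depending only on $d,\alpha,p,q,\Lambda,K$) such that for every $\rho\in \N$ with $r\leq \theta\rho\leq \rho\leq R$, every $\omega\in \Omega(r,R)$, every $(\phi,\sigma)\in\mathbf{C}(\omega,1/C_0,r,R)$, and every $u\in\mathbf{H}(\omega,\rho)$,
\[
\mathrm{Exc}(\omega,\theta\rho,u,\phi)\leq \tfrac12\theta^{\alpha}\,\mathrm{Exc}(\omega,\rho,u,\phi).
\]
Iterating across the geometric sequence of scales $\theta^n R$ until one first crosses below $r$ then yields the claimed decay with $C_1$ depending only on $\theta$ and $\alpha$.

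For the one-step improvement at a scale $\rho$, let $\xi\in\R^d$ be a near-minimizer of $\mathrm{Exc}(\omega,\rho,u,\phi)$, so that $\nabla u$ is best approximated in energy on $D_\rho$ by $\xi_i(\unit{i}+\nabla\phi_i)$. I would introduce an $\matAhom$-harmonic surrogate $v$ on a slightly shrunk box $D_{\rho'}\subset D_\rho$ whose boundary data is matched to $u-\xi_i\phi_i$. Because $\matAhom$ is diagonal with entries in $[K^{-1},K]$, the equation for $v$ is a constant-coefficient discrete elliptic equation, and classical discrete regularity produces a vector $\xi'\in\R^d$ with
\[
\avnorm{\nabla v-\xi'}{2}{D_{\theta\rho}} \;\lesssim\; \theta\,\avnorm{\nabla v}{2}{D_{\rho'}},
\]
while $\avnorm{\nabla v}{2}{D_{\rho'}}$ is in turn controlled by $\mathrm{Exc}(\omega,\rho,u,\phi)^{1/2}$ through an energy comparison between $u$ and $v$.

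The critical intermediate estimate is the homogenization error. Setting $w=u-v-(\nabla_i v)\,\phi_i$ and exploiting the flux-corrector identity $-\nabla^*\cdot\sigma_i=\matA(\unit{i}+\nabla\phi_i)-\matAhom\unit{i}$ from \cref{a12}, I would rewrite the discrete equation satisfied by $w$ in a divergence form whose right-hand side pairs $\phi,\sigma$ with second-order discrete derivatives of $v$. Testing against an interior cut-off of $w$ and invoking discrete Sobolev and H\"older inequalities under the exponent condition $1/p+1/q\leq 2/(d-1)$ converts the $L^{2p/(p-1)}$- and $L^{2q/(q-1)}$-sublinearity of $\phi,\sigma$ (bounded by $\delta=1/C_0$ on the range $[r,R]$) into an $L^2$-energy bound for $\nabla w$ proportional to $\delta$ times $\mathrm{Exc}(\omega,\rho,u,\phi)^{1/2}$. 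Combining this with the affine approximation of $v$ and the triangle inequality delivers the one-step improvement once $C_0$ is chosen large enough to beat the implicit constants.

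The main obstacle I anticipate is the boundary layer: because $\phi,\sigma$ do not vanish on $\partial D_\rho$, the integration by parts in the derivation of the equation for $w$ leaves boundary terms that no interior estimate can absorb. The remedy, modelled on \cite{BFO18}, is to smooth $\phi,\sigma$ across a thin boundary layer and to absorb the resulting surface traces via $L^p$/$L^q$ estimates on discrete hypersurfaces parallel to $\partial D_\rho$. Constructing the discrete surface smoothing is the task of \cref{b17}, and the matching discrete boundary $L^p$-estimates are the content of \cref{b47c}; together they are the new discrete ingredients without which the \cite{BFO18} argument cannot be ported, and they also explain the strengthening of the exponent hypothesis from $1/p+1/q\leq 2/d$ (sufficient for the interior sublinearity secured in \cref{a12}) to $1/p+1/q\leq 2/(d-1)$ appropriate for Sobolev embeddings on $(d-1)$-dimensional boundary slices.
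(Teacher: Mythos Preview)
Your overall architecture---Campanato iteration from a one-step improvement, the homogenization error $w=u-v-\eta\phi_i\nabla_i v$, the flux-corrector identity, and the discrete interior tools (Caccioppoli, mean value)---matches the paper's adaptation of the five-step argument from \cite{BFO18}. However, your diagnosis of the boundary difficulty and its remedy are off, and this matters because the boundary term is where all the discrete novelty lives.

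The boundary term does \emph{not} arise because $\phi,\sigma$ fail to vanish on $\partial D_\rho$: the cutoff $\eta$ in the ansatz $w=u-v-\eta\phi_i\nabla_i v$ already kills the corrector contribution near the boundary, so that $w=u-v$ on $\partial D_R$. What survives after testing the equation for $w$ against itself (\cref{x25}) is the flux mismatch
\[
\sum_{y\in\tilde{\partial}D_R}(u(y)-v(y))\,(\omega\nablaBar u-\omegaH\nablaBar v)_{\funcNu{R}(y)},
\]
and the obstruction is that the trace of $u$ carries no a priori regularity. Accordingly, the paper does \emph{not} smooth $\phi,\sigma$; it smooths the \emph{boundary data used to define $v$}. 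In the Dirichlet case ($q\geq p$) one takes $v=\DirichletExt{R}{\epsilon}u$ with $v|_{\partial D_R}=\smooth{R}{\epsilon}u$; in the Neumann case ($p\geq q$) one takes $v=\NeumannExt{R}{\epsilon}u$ with prescribed normal flux $\dualsmooth{R}{\epsilon}(\omega_{\funcNu{R}}\nablaBar_{\funcNu{R}} u)$, where $\dualsmooth{R}{\epsilon}$ is a dual of the smoothing operator. This case split, which you omit, is structural: it is what lets the boundary term be bounded by $\epsilon^{\theta(p,q)}\overline{\Lambda}$ (\cref{y26}). The discrete surface smoothing of \cref{b17} and the tangential--normal trace comparison \eqref{y01c}, \eqref{y01d} are indeed the key new ingredients, and you are right that the condition $1/p+1/q\leq 2/(d-1)$ enters through the $(d-1)$-dimensional Sobolev inequality on $\partial D_R$; but they act on the construction of $v$, not on any regularization of the correctors.
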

In \cref{c10,a12,y14} above we first consider the discrete derivatives  as \emph{functions acting on vertices}. This notation is inspired by some authors who study homogenization for discrete elliptic equations on $\Z^d$, e.g., 
\cite{GO11,GO12}. Note that in the notation of 
\cref{a01,c10} and \cref{a12b} it holds
for every $u\colon\Z^d\to\R$, $x\in\Z^d\to\R $, $\omega\in\Omega$  that 
\begin{align}
(\mathcal{L}^\omega u)(x)= -(\nabla^*\cdot \matA^\omega \nabla u)(x) ,\end{align}
which is a good way to adapt the notation 
in the continuum case (\cref{m2}) to the discrete.

\renewcommand{\smooth}[2]{\mathbf{S}}
We will essentially adapt the proofs in Bella, Fehrman, and Otto~\cite{BFO18}  in the continuum to the discrete.
Since discrete and continuum objects often have many similar properties, we only  work out in details arguments which really make issues in the discrete.

The construction of the correctors $\phi$ and $\sigma$ (i.e., the proof of \cref{a12}) is discussed
carefully in \cref{a07c}. Note that for sublinearity we do not use Sobolev's embedding and weak convergence as done in p. 1394 in~\cite{BFO18} which might be hard to understand in the discrete. Our argument relies on a density argument combined with Sobolev's inequality in the discrete (see \cref{b01}). Therefore, we only need to assume $1/p+1/q\leq 2/d$ rather than the strict inequality 
$1/p+1/q< 2/d$.

The proof of \cref{a12a} from \cref{a12,y14} is straightforward: One can easily adapt the proof of Theorem~1 in p. 1388 in~\cite{BFO18} into the discrete. Note that 
Lemma~3 in~p.~1385 in~\cite{BFO18}, an important argument in this proof, is just a simple combination of H\"older's inequality and
Cacciopolli's inequality, which is also well-known for discrete models (see, e.g., Proposition~4.1 in~\cite{DD05}, Section~2 in~\cite{Del98}, or Section~4.3 in~\cite{CG98}).

\subsection{Sketch of the proof of the excess decay}

In the rest of this section  we sketch the proof of \cref{y14} (and use therefore the notation given in there). It is now convenient to introduce
\cref{b66b} below that will be used throughout the rest of this paper.

\begin{setting}[Sets and functions of edges]\label{b66b}
Let $\E ^d_{\pm}$
be the set of \emph{oriented nearest neighbour edges} which satisfies that
$
\E ^d_{\pm}=\{(x,y)\mid x,y\in\Z^ d, x\sim y\}.$
 For every 
$A\subseteq \Z^d$,
 $u\colon A\to\R^d$ let $ \nablaBar _{\cdot }u\colon\{(x,y)\in\E^d_\pm \colon x,y\in A\}\to\R  $ be the function which satisfies
for all $ (x,y)\in\E^d_\pm $ with $x,y\in A$
 that
$\nablaBar_{(x,y)} u= u(y)-u(x)$ and we write $\nablaBar_{(x,y)} u= \nablaBar_{xy} u$ to lighten the notation.
For every $R\in \N$ let
$E_R$,
$\etan{R}$, $\enor{R}$, and  $\tilde \partial D_R$ be the sets given by
$E_R=\{(x,y)\in \mathbb E^d_\pm\mid  \tfrac12|(x+y)|_\infty<R\}$,
$
\etan{R}=\{(x,y)\in \E ^d_\pm \mid  x,y\in \partial D_R, x\sim y\}$, $\enor{R}=\{(x,y)\in\E^d_{\pm}\mid x\in  D_R,y\in \partial D_R\}$, and
$
\tilde \partial D_R=\{x\in\partial D_R\mid \exists y\in D_R\colon  x\sim y\}.$
For every $R\in\N$ let  $\funcNu{R}\colon \tilde{\partial}D_R\to\enor{R}$ be the mapping which satisfies for all $y\in \tilde{\partial}D_R$ that
$\funcNu{R}(y)\in \enor{R}$ is the unique element in $\enor{R}$ such that
$y$ is an endpoint of $\funcNu{R}(y)$
 and we call $\funcNu{R}$, $R\in\N$, \emph{the normal mappings}. 
 See \cref{f01} for an illustration of $\etan{R}, \enor{R}, \tilde{\partial}D_R,\funcNu{R} $.
 %\\
\end{setting}
\begin{figure}\center
\begin{tikzpicture}[scale=1.5]
\draw[very thin, dashed, gray,step =0.4] (-2,-2) grid (2,2);
\draw[very thin, dashed, gray,step =1] (3.6,0) grid (6.4,2);
\draw (3.6,2)--(6.4,2)[line width=2, color=red];
\draw [fill,red] (4,2) circle(.1);
\draw [fill,red] (5,2) circle(.1);
\draw [fill,red] (6,2) circle(.1);
\draw (4,1)--(4,1.8)[very thick, blue,->];
\draw (5,1)--(5,1.8)[very thick, blue,->];
\draw (6,1)--(6,1.8)[very thick, blue,->];
\node[below right] at (5,1) {$x$};
\node[below right] at (5,2) {$y$};
\node[left] at (5,1.5) {$\funcNu{R}(y)$};
\draw (-2,-2)--(-2,2)--(2,2) -- (2,-2)--(-2,-2)[very thick, color=red];
\foreach \x in {-1.6,-1.2,...,2} 
	\draw[very thick,->,color=blue] (\x,1.65)--(\x,1.90);
\foreach \x in {-1.6,-1.2,...,2} 	
	\draw[very thick,->,color=blue] (\x,-1.65)--(\x,-1.90) ;
\foreach \x in {-1.6,-1.2,...,2} 
	\draw[very  thick,->,color=blue] (1.65,\x)--(1.90,\x) ;
\foreach \x in {-1.6,-1.2,...,2} 	
	\draw[very thick,->,color=blue] (-1.65,\x)--(-1.90,\x);
\foreach \x in {-1.6,-1.2,...,2} 	
	\draw [red](2,\x) circle(.05)[fill];	
\foreach \x in {-1.6,-1.2,...,2} 	
	\draw [red](-2,\x) circle(.05)[fill];	
\foreach \x in {-1.6,-1.2,...,2} 	
	\draw[red] (\x,2) circle(.05)[fill];	
\foreach \x in {-1.6,-1.2,...,2} 	
	\draw [red](\x,-2) circle(.05)[fill];
\draw [red] (-2,-2) circle(.05)[fill,white];
\draw [red] (-2,2) circle(.05)[fill,white];
\draw [red] (2,-2) circle(.05)[fill,white];
\draw [red] (2,2) circle(.05)[fill,white];
\draw [red] (-2,-2) circle(.05);
\draw [red] (-2,2) circle(.05);
\draw [red] (2,-2) circle(.05);
\draw [red] (2,2) circle(.05);
\end{tikzpicture}
\caption{Normal and tangential edges in the two-dimensional case}\label{f01}

\begin{flushleft}
\emph{Left.} The sets $\etan{N}$, $\enor{d,N}$, and $\tilde{\partial}D_R$ in \cref{b66b}  for $d=2$, $N=10$:
$\etan{d,N}$ contains all oriented red edges going in both direction, $\enor{d,N}$ contains all \emph{exterior} blue normal edges, and $\tilde{\partial}D_R$ contains all red points without four corners $\circ$. \emph{Right.} The definition of $\funcNu{R}$ in \cref{b66b}:  for
every $y\in\tilde\partial D_R$ there exists a unique
 $x\in D_R$ such that $(x,y)\in\enor{R}$ and we set $\funcNu{R}(y)=(x,y).$

\end{flushleft}\end{figure}
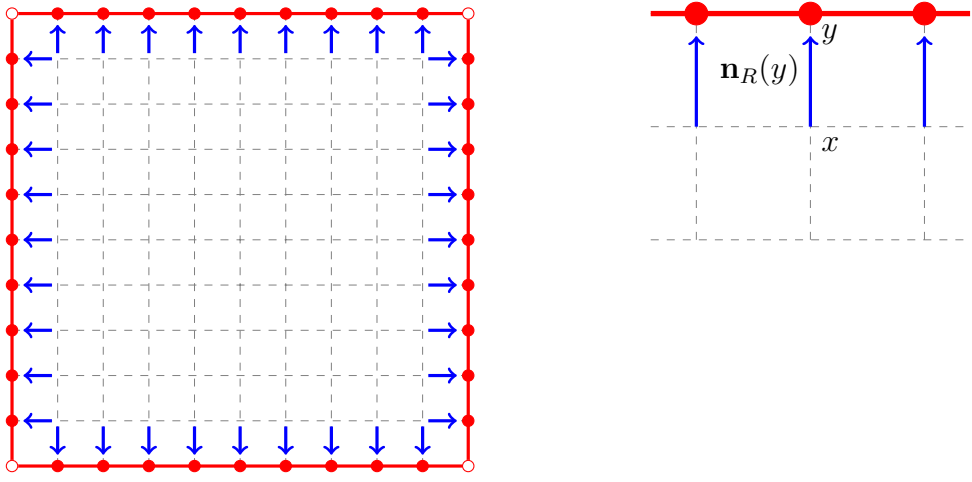

We will adapt the 5-step proof of 
Theorem~2 (the excess decay) in~\cite{BFO18} (see this proof in Section~5 in~\cite{BFO18}) to the discrete.
Let 
$\omegaH\in\Omega$ be the environment which satisfies for all 
$i\in [1,d]\cap\N $, $x\in\Z^d$
that $\omegaH(\{x,x+\unit{i}\})=(\matAhom)_{ii}$ and we consider
$u\in \mathbf{H}(\omega,R)$, $v\in\mathbf{H}(\omegaH,R) $, and the homogenization error $w=u-v-\phi_i\nabla_i v$, Einstein's notation used, as discrete counterparts of $u,v,w$ in the proof in the continuum.
Note that for our result we are only interested in boxes of large sizes $R\gg1$.

First, the calculations in Step 1 (p. 1395-6 in~\cite{BFO18}) are adapted to the discrete
by \cref{a73} where 
a new notation is chosen to simplify the product rule in the discrete case. 

To adapt 
Step 2 (p. 1396--1403 in~\cite{BFO18}) to the discrete requires more technicalities. 
First, 
for convenience we use  a similar notation as (30) and (33) in~\cite{BFO18}, i.e.,
let $\Lambda, \overline{\Lambda}\in [0,\infty)$ be given by
\begin{align}
\Lambda=\avnorm{\omega}{p}{E_R}+
\avnorm{\omega^{-1}}{q}{E_R}\quad\text{and}\quad
\overline{\Lambda}= \Lambda \avnorm{\omega(\nablaBar u)^2}{1}{E_R}.
\end{align}
Observe that
the sentence containing formula (39) in~\cite{BFO18} (about a suitable radius)  still works in the discrete case. 
So, we can assume that
\begin{align}
\avnorm{\nablaBar u}{\frac{2q}{q+1}}{\etan{R}}+
\avnorm{\omega\nablaBar u}{\frac{2q}{q+1}}{\enor{R}}\leq \overline{\Lambda}^{\nicefrac{1}{2}}.
\end{align}
Next, let $\epsilon\in (0,1/2)$, $\gamma\in (0,1/8)\cap(1/\N)$ be fixed. 
As in the continuum case (cf. (34) in~\cite{BFO18}) we will  construct $v$ such that
the homogenization error $w=u-v-\phi_i\nabla_i v$ satisfies that
\begin{align}\begin{split}
\avnorm{\omega (\nablaBar w)^2}{1}{E_R}\lesssim \epsilon^{\theta(p,q)}\overline{\Lambda}&+
\gamma
^{\min\left\{\frac{p-1}{2p},
\frac{q-1}{2q}\right\}}
\epsilon^{-(d-1)\min \left\{\frac{p+1}{p}, \frac{q+1}{q}\right\}} \Lambda\overline{\Lambda}\\
&+\left[\frac{1}{R^2}\left(
\avnorm{\phi}{\frac{2p}{p-1}}{D_R}+
\avnorm{\sigma}{\frac{2q}{q-1}}{D_R}\right)^2\right] \gamma^{-(d+2)}\Lambda\overline{\Lambda}\label{t04}
\end{split}\end{align}
where $\theta(p,q)\in [0,1]$ is given by
\begin{align}
\theta(p,q)= 
\left[1-(d-1)\left(\frac{1}{2p}+\frac{1}{2q}\right)\right]
\1_{d>2}+
\frac{(q-1)p}{q(p+1)}\1_{d=2}.\label{t04b}
\end{align}
We also start with the ansatz $w=u-v-\eta\phi_i \nabla_i v $ where $\eta \colon \Z^d\to\R$ is a cutoff function (like (43) in~\cite{BFO18}) which satisfies for all
$x\in D_{R-2\rho} $ that $\eta(x)=1$,
which satisfies for all
$x\in \Z^d\setminus D_{R-\rho} $ that $\eta(x)=0$, and 
which satisfies for all
$e\in \E^d_\pm $ that $|\nabla_e \eta|\leq \rho^{-1}$ where $\rho,R\in \N$ with $1\ll\rho\ll R$.
Our next aim is to adapt p. 1399 in~\cite{BFO18}, which is 
an a priori estimate on the energy of the homogenization error $w$,
 to the discrete case. To this end we do some detailed calculations in \cref{b66,x25}.  
Observe that \cref{x25}, Young's inequality, and H\"older's inequality yield that
\begin{align}\label{t01}
\begin{split}
&\avnorm{\omega (\nablaBar w)^2}{1}{E_R}
\lesssim  \left[
\avnorm{\omega(\nablaBar v)^2\1_{E_R\setminus E_{R-\rho}}}{1}{E_R}
+
\avnorm{\omega^{-1}(\nablaBar v)^2\1_{E_R\setminus E_{R-\rho}}}{1}{E_R}\right]\\
&+\frac{1}{|E_R|}\left|\sum_{(x,y)\in   \enor{R}}\left(u(x)-v(x)\right)
\left(\omega\nabla u-\omega_\mathrm{h} \nabla v\right)_{xy}\right|\\
&+
\Lambda \left[
\avnorm{|\phi|^2} {\frac{p}{p-1}}{D_R}+
\avnorm{|\sigma|^2}{\frac{q}{q-1}}{D_R}\right]
\left[
\avnorm{\nabla^*\nabla v}{\infty}{D_{R-\rho}}^2
+
\avnorm{\nabla\nabla v}{\infty}{D_{R-\rho}}^2+
\rho^{-2}\avnorm{\nabla v}{\infty}{D_{R-\rho}}^2\right].\end{split}
\end{align}

To construct $v$, as in the continuum case, we also distinguish the Dirichlet case ($q\geq p$) and the Neumann case ($p\geq q$). 
In the Dirichlet case we use a Dirichlet boundary condition  and in the Neumann case we use a Neumann boundary condition. While discrete boundary problems are widely studied in numerical analysis, the first challenge here is to find a way to \emph{smooth functions on the discrete surface of a discrete box}, i.e., to answer what
formulas (37) and (38) in~\cite{BFO18} look like in the discrete case. The solution to this issue is discussed in \cref{b17} whose main result  \cref{a48a} proves the existence of a family of good smoothing operators. Our construction relies on an elegant idea by Scott and Zhang~\cite{SZ90}. Since for our purpose we do not need the whole generality of~\cite{SZ90},  we choose an important feature of this paper to represent from scratch for convenience of the reader (see \cref{b39c}). 

Another difficulty is to adapt some regularity estimates (estimates (40) and (41) in~\cite{BFO18} comparing the tangential and normal component on a surface of the gradient of a harmonic function) to the discrete case. This contains many technicalities and is written in another paper (see~\cite{Ngu19}).
Here, we consider it as  assumptions (see \eqref{y01c} and \eqref{y01d} in \cref{b47b}).  

Although 
 \cref{a48a} is very promising, we still have  some minor issues: 
\begin{enumerate}[(a)]
\item \label{s01a}
 in contrast to the continuum case 
a discrete Neumann condition is only defined on $\tilde{\partial} D_R$, which is a proper subset of $\partial D_R$ (see \cref{f01}), and we therefore cannot directly use the sequence of smoothing operators in \cref{a48a} to smooth a Neumann condition and
\item \label{s01b} the proof in the continuum case  requires the symmetry of the convolution operator (see the sentence containing (49) in~\cite{BFO18}) which we have not adapted into the discrete yet. 
\end{enumerate}
A simple way to resolve issue \eqref{s01a} above is to identify functions on 
$\partial D_R$ as functions on $\tilde{\partial}D_R$, i.e.,
we replace a function $f\colon {\partial} D_R\to\R$ 
by $Zf\colon\partial D_R\to\R  $ which satisfies for all
$x\in\tilde\partial D_R$
that $(Zf)(x)=f(x)$ and which satisfies for all $x\in\partial D_R\setminus\tilde\partial D_R$ that $(Zf)(x)=0$. However, this ''modifying by zeros'' has the disadvantage that we cannot control the gradient on the boundary. A better way to resolve (a) is to copy the values of $f$ on $\tilde{\partial} D_R$
to $\partial D_R\setminus\tilde{\partial} D_R$: 
\begin{remark}[Modifying operator]\label{a04}
Let
$h\colon (\partial D_R\setminus\tilde{\partial} D_R)\to \tilde{\partial}D_R $ satisfy for all $x\in (\partial D_R\setminus\tilde{\partial} D_R)$
that $|h(x)-x|_\infty\leq 1$ and let $\modi{R}\colon \R^{{\partial}D_R}\to \R^{{\partial}D_R} $ be the \emph{modifying operator} which satisfies for all $x\in\tilde\partial D_R$, $f\colon \tilde{\partial} D_R\to\R$ that $(\modi{R}f)(x)= f(x)$ and
which satisfies for all $x\in\partial D_R\setminus\tilde\partial D_R$, $f\colon \tilde{\partial} D_R\to\R$ that $(\modi{R}f)(x)= f(h(x))$. 
For $d=2$ (see \cref{f01}) 
each corner, i.e., each element of $\partial D_R\setminus\tilde{\partial}D_R$,
has two neighbour and there are $4$ corners. Hence, for $d=2$
there are $2^4=16$ ways to choose $h$ and $\modi{M}$. However, it is easy to see that the modifying operator $\modi{R}$ satisfies for all $u\colon\partial D_R\to\R$, $r\in[1,\infty]$ that
$
\avnorm{\nablaBar (\modi{R}u)}{r}{\etan{R}}\leq c
\avnorm{\nablaBar u}{r}{\etan{R}}
$ and $
\avnorm{\modi{R}u}{r}{\partial D_{R}}\leq c \avnorm{u}{r}{\tilde \partial D_{R}}
$
where $c$ only depends on the dimension, e.g., choose $c=4^d$, and do not depend on the choice of $h$.
\end{remark}
In order to resolve issue \eqref{s01b} above we define the operator $\dualsmooth{R}{\epsilon}\colon \R^{\tilde{\partial}D_R}\to\R^{\tilde{\partial}D_R}$, which, up to the modifying operator $\modi{M}$, can be viewed as the dual operator of $\smooth{R}{\epsilon}$ in \cref{a48a} (here, the dependency on $R,\epsilon$ are dropped everywhere to lighten the notation). Here, 
$\dualsmooth{R}{\epsilon}$ is defined as the operator which satisfies for all 
$h\colon \tilde{\partial} D_R\to\R$, $g\colon \partial D_R\to\R$ that
\begin{align}
\sum_{x\in \tilde \partial D_{R}} (\dualsmooth{R}{\epsilon}h)(x) g(x)=
\sum_{x\in \tilde \partial D_{R}} h(x) \left(\smooth{R}{\epsilon}\modi{R}g\right)(x).
\end{align}
With $g\defeq (\1_{\{x\}}\upharpoonright_{\partial D_R})$ 
for
$x\in\tilde{\partial}D_R$ 
we obtain the canonical definition that 
$\dualsmooth{R}{\epsilon}\colon \R^{\tilde{\partial}D_R}\to\R^{\tilde{\partial}D_R}$ is the operator which satisfies for all $h\colon \tilde{\partial} D_R\to\R$, $x\in\tilde{\partial}D_R$ that
\begin{align}
 (\dualsmooth{R}{\epsilon}h)(x) =
\sum_{y\in \tilde \partial D_{R}} h(y) \left(\smooth{R}{\epsilon}\modi{R} \left(\1_{\{x\}}\upharpoonright_{\partial D_R}\right) \right)(y)
\end{align}
where for every
$x\in\tilde{\partial}D_R$ we denote by $\1_{\{x\}}\upharpoonright_{\partial D_R}\colon\partial D_R\to\R $  the function which satisfies for all 
$y\in\partial D_R\setminus\{x\}$ that $(\1_{\{x\}}\upharpoonright_{\partial D_R}) (y)=0$ and 
$(\1_{\{x\}}\upharpoonright_{\partial D_R}) (x)=1$.
Further technicalities are worked out in \cref{b47c}.
In \cref{b47b} we will list all properties of 
$\smooth{R}{\epsilon}$, $\modi{R}$, and $\dualsmooth{R}{ \epsilon}$ needed for the next step and we define in the Neumann case
 $v\defeq \NeumannExt{R}{\epsilon}u$ with $ \NeumannExt{R}{\epsilon}u$ in \eqref{y01} and in the Dirichlet case $ v\defeq\DirichletExt{R}{\epsilon}u$ with $\DirichletExt{R}{\epsilon}u$ in \eqref{y01b}.
Although we do not have the symmetry as in the continuum, $\dualsmooth{R}{\epsilon}$
has similar properties as $\smooth{R}{\epsilon}$ (see \cref{y27,d09}).
With other techniques from~\cite{BFO18} (\cref{x33,d06,y03,y10,y11,d11})
we  obtain 
\cref{y26}, which implies
an estimate on
the boundary term  in \eqref{t01}   and
 which adapts (53) and (54) in~\cite{BFO18} to the discrete case:
\begin{align}
\frac{1}{|E_R|}
\left|\sum_{x\in\tilde{\partial} D_R}\left(u(x)-v(x)\right)
\left(\omega\nabla u-\omega_\mathrm{h} \nabla v\right)_{\funcNu{R}(x)}\right|\lesssim
\epsilon^{\theta(p,q)}\overline{\Lambda}\label{t03a}
\end{align}
where $\theta(p,q)$ is given in \eqref{t04b} and $\funcNu{R}$ is the normal mapping in \cref{b66b}.
The rest of Step 2 is now much simpler. The annulus term  in \eqref{t01} is estimated by using \cref{d16}:
\begin{align}\begin{split}
&\avnorm{\omega(\nablaBar v)^2\1_{E_R\setminus E_{R-\rho}}}{1}{E_R}
+
\avnorm{\omega^{-1}(\nablaBar v)^2\1_{E_R\setminus E_{R-\rho}}}{1}{E_R}\\&\lesssim    \left(\frac{\rho}{R}\right)
^{\min\left\{\frac{p-1}{2p},
\frac{q-1}{2q}\right\}}
\epsilon^{-(d-1)\min \left\{\frac{p+1}{p}, \frac{q+1}{q}\right\}} \Lambda\overline{\Lambda}.\label{t03b}
\end{split}\end{align}
Next, note that the mean value inequality and
Cacciopoli's inequality for harmonic functions also hold in the discrete
(for the mean value inequality see, e.g., Lemma 3.4 in~\cite{Del98} or
Section 4 in~\cite{CG98} and for Cacciopoli's inequality see, e.g., Proposition 4.1 in~\cite{DD05}, Section 2 in~\cite{Del98}, or Section 4.3 in~\cite{CG98}). In addition, 
in our situation, interior estimates  are much easier than that on boundary terms since we can always adapt the sizes of balls and annuli. Here, the mean value inequality,
Cacciopoli's inequality,  the fact that
$\forall x\in D_{R-\rho}\colon D_{\rho/2}(x)\subseteq D_R $, and 
the fact that 
$\avnorm{\nabla v}{2}{D_{R}}^2\lesssim \overline{\Lambda}$ (see \cref{y15})
 prove for all $x\in D_{R-\rho}$ that
\begin{align}
|\nabla\nabla v(x)|^2 \lesssim 
\avnorm{\nabla\nabla v}{2}{D_{\rho/8}(x)}
\lesssim \rho^{-2}
\avnorm{\nabla v}{2}{D_{\rho/2}(x)}^2
\lesssim R^d\rho^{-(d+2)} \avnorm{\nabla v}{2}{D_{R}}^2
\lesssim \frac{1}{R^2}\left(\frac{R}{\rho}\right)^{d+2}\overline{\Lambda}
,
\end{align}
\begin{align}
|\nabla^*\nabla v(x)|^2 \lesssim 
\avnorm{\nabla^*\nabla v}{2}{D_{\rho/8}(x)}^2
\lesssim \rho^{-2}
\avnorm{\nabla v}{2}{D_{\rho/2}(x)}^2
\lesssim R^d\rho^{-(d+2)} \avnorm{\nabla v}{2}{D_{R}}^2
\lesssim \frac{1}{R^2}\left(\frac{R}{\rho}\right)^{d+2}\overline{\Lambda}
,
\end{align}
and
\begin{align}
{\rho^{-2}}|\nabla v(x)|^2\lesssim 
{\rho^{-2}} 
\avnorm{\nabla v}{2}{D_{\rho/2}(x)}^2
\lesssim  R^d\rho^{-(d+2)} \avnorm{\nabla v}{2}{D_{R}}^2
\lesssim \frac{1}{R^2}\left(\frac{R}{\rho}\right)^{d+2}\overline{\Lambda}
.
\end{align}
This 
  implies an estimate for the last term in \eqref{t01}:
\begin{align}\begin{split}
&\Lambda \left[
\avnorm{|\phi|^2} {\frac{p}{p-1}}{D_R}+
\avnorm{|\sigma|^2}{\frac{q}{q-1}}{D_R}\right]
\left[
\avnorm{\nabla^*\nabla v}{\infty}{D_{R-\rho}}^2
+
\avnorm{\nabla\nabla v}{\infty}{D_{R-\rho}}^2+
\rho^{-2}\avnorm{\nabla v}{\infty}{D_{R-\rho}}^2\right]\\
&\lesssim 
\Lambda \left[\frac{1}{R^2}\left(
\avnorm{\phi} {\frac{2p}{p-1}}{D_R}+
\avnorm{\sigma}{\frac{2q}{q-1}}{D_R}\right)^2\right] \left(\frac{R}{\rho}\right)^{d+2}\overline{\Lambda}.\label{t03c}
\end{split}\end{align}
This, \eqref{t03a}, \eqref{t03b}, and \eqref{t01} imply that
\begin{align}\begin{split}
\avnorm{\omega (\nablaBar w)^2}{1}{E_R}\lesssim \epsilon^{\theta(p,q)}\overline{\Lambda}&+
\left(\frac{\rho}{R}\right)
^{\min\left\{\frac{p-1}{2p},
\frac{q-1}{2q}\right\}}
\epsilon^{-(d-1)\min \left\{\frac{p+1}{p}, \frac{q+1}{q}\right\}} \Lambda\overline{\Lambda}\\&+\Lambda\left[\frac{1}{R^2}\left(
\avnorm{\phi} {\frac{2p}{p-1}}{D_R}+
\avnorm{\sigma}{\frac{2q}{q-1}}{D_R}\right)^2\right] \left(\frac{R}{\rho}\right)^{d+2}\overline{\Lambda}.
\end{split}\end{align}
Choosing $\rho\geq 20$ and $R\geq 64\rho$ such that $ \gamma \leq \rho/R \leq 2\gamma$ and noting
that for our main result we are only interested in $R\gg 1$
we obtain \eqref{t04} 
and therefore finish this step (cf. (56) in~\cite{BFO18}). Steps 3,4, and 5 can be easily adapted to the discrete. Indeed, we only have to adapt interior estimates which are consequences of the discrete mean value inequality (see, e.g., Lemma 3.4 in~\cite{Del98} or
Section 4 in~\cite{CG98}) and the discrete Cacciopolli inequality
(see, e.g., Proposition 4.1 in~\cite{DD05}, Section 2 in~\cite{Del98}, or Section 4.3 in~\cite{CG98}) and we can always adapt the size of boxes.
\medskip

\renewcommand{\smooth}[2]{\mathbf{S}_{#1,#2}}

\section{Construction of the correctors}\label{a07c}
\subsection{Preliminary}
Throughout this section we always use the notation in \cref{m03} below.
\begin{setting}\label{m03}
Let $\mathbb{P}$ be a stationary and ergodic probability measure on $(\Omega,\mathcal{F})$ and let $\matA\colon \Omega\times \Z^d\to \R^{d\times d}$ be the function whose values are diagonal $\R^{d\times d}$ matrices and which satisfies for all $x\in\Z^d$, $\omega\in\Omega$, $i,j\in [1,d]\cap\N$ that
$\matA_{ij}(\omega,x)=\omega(\{x,x+\unit{i}\})\1_{\{i\}}(j)$. 
For every measurable function $\zeta\colon\Omega\to\R $ let 
$\mathrm{D}_i\zeta,\mathrm{D}_i^*\zeta\colon \Omega\to\R $, $i\in [1,d]\cap\N$,
be the functions which satisfy for all 
$\omega\in \Omega$,
$i\in [1,d]\cap\N$ that
\begin{align}
(\mathrm{D}_i\zeta)(\omega)= \zeta(\tau_{\unit{i}}\omega)-\zeta(\omega)\quad\text{and}\quad(\mathrm{D}_i^*\zeta)(\omega)= \zeta(\tau_{-\unit{i}}\omega)-\zeta(\omega)
\end{align}
and let 
$\mathrm{D}\zeta,\mathrm{D}^*\zeta\colon \Omega\to\R^d $ be the functions which satisfy for all $\omega\in \Omega$
that
\begin{align}
(\mathrm{D}\zeta)(\omega)=\sum_{i=1}^{d}(\mathrm{D}_i\zeta)(\omega)\unit{i}\quad\text{and}\quad(\mathrm{D}^*\zeta)(\omega)=\sum_{i=1}^{d}(\mathrm{D}_i^*\zeta)(\omega)\unit{i}.
\end{align} %\\
 Since 
the $x$-derivatives are denoted by $\nabla, \nabla^*$ and the $\omega$-derivative are denoted by $\mathrm{D}, \mathrm{D}^*$,
no confusion can arise and we extend the notation   above to functions $u$ that depend on $\omega\in\Omega$ and $x\in \Z^d$. E.g., we write $(\nabla u)(\omega,x)= (\nabla u(\omega,\cdot))(x)$ and 
$(\mathrm{D} u)(\omega,x)= (\mathrm{D}u (\cdot,x))(\omega)$.
\end{setting}
\cref{a07b} below is a simple observation. However, we will use it several times.
\begin{remark}\label{a07b}
For all
$v\in\Stat(\R)$ it holds that  $\nabla v,\mathrm{D}v,\nabla^* v,\mathrm{D}^* v\in \Stat(\R^d)$, 
$\P$-a.s. 
$\nabla v=\mathrm{D}v$, and $\P$-a.s.  $\nabla^* v=\mathrm{D}^* v$.
Roughtly speaking for stationary functions we can replace a $x$-derivative by the corresponding $\omega$-derivative and vice verse.
\end{remark}

%Next, as usually done for the first order corrector $\phi_i$, we fix $\sigma_{ijk}(\omega,0)=0$. For all $e\in\Z^d$ with $|e|=1$, $ \sigma_{ijk}(\cdot,e)$ is thus well-defined. The requirement that its gradient $ \nabla^*\sigma_{ijk}$ is stationary, i.e.  $\sigma_{ijk}$ is co-cycle, then extends $ \sigma_{ijk}$ to $\Omega\times\Z^d$ 

\cref{a31c} below  is a well-known result on  
\emph{the first order corrector}
$\phi$ (see, e.g.,~\cite{Kue83,BB07,Bis11,GNO14,BFO18}, especially Section 3.2 in~\cite{Bis11} for a detailed discussion).
\cref{z03} is  a typical result on weak convergence and the idea of its proof might be found in, e.g., \cite{Kue83}.
Since the notation is quite different in the literature we include this result here for convenience of the reader.
\begin{lemma}[Construction of $\phi$]\label{a31c}	
Let $q\in [1,\infty]$ 
and assume  that 
\begin{align}
\left\|\matA(\cdot,0)\right\|_{L^1(\Omega,\R^{d\times d})}+
\left\|\matA(\cdot,0)^{-1}\right\|_{L^q(\Omega,\R^{d\times d})}<\infty .
\end{align}
 Then there exist measurable functions 
$\phi_i\colon \Omega\times\Z^d\to \R$, $i\in [1,d]\cap\N$, and a diagonal matrix 
$\matAhom\in \R^ {d\times d}$
such that $\P$-a.s. for all $i\in [1,d]\cap\N$ it holds that
$
\nabla\phi_i \in \Stat(\R^d)$, $
-\nabla^*\cdot\left( \matA\left(\unit{i}+\nabla\phi_i\right)\right)=0$,
$
\E\!\left[ \left(\nabla\phi_i\cdot \matA\nabla\phi_i\right)(\cdot,0)\right]<\infty$, and
$\E\!\left[ |(\nabla\phi_i)(\cdot,0)|^{2q/(q+1)}\right] <\infty$.

\end{lemma}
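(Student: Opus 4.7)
The plan is to follow the classical Hilbertian construction going back to K\"unnemann~\cite{Kue83} and carried out in the degenerate setting in~\cite{BB07,Bis11,GNO14,BFO18}; the only new element is a clean H\"older extraction of the $L^{\nicefrac{2q}{q+1}}$ bound from the energy estimate, which exploits only $\|\matA(\cdot,0)^{-1}\|_{L^q}<\infty$ and no uniform lower bound on $\matA$.

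First I would introduce the Hilbert space $H$ of \emph{stationary gradient fields}: the closure in the weighted space $L^2(\Omega,\R^d;\matA(\cdot,0)\,\mathrm{d}\P)$, with inner product $\langle\psi_1,\psi_2\rangle_H=\E[\psi_1\cdot\matA(\cdot,0)\psi_2]$, of the set of elementary gradients $\{\mathrm{D}\zeta:\zeta\in L^\infty(\Omega,\R)\}$. The hypothesis $\|\matA(\cdot,0)\|_{L^1}<\infty$ makes $\langle\cdot,\cdot\rangle_H$ well defined and the linear functional $\psi\mapsto-\E[\unit{i}\cdot\matA(\cdot,0)\psi]$ continuous on $H$. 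By the projection theorem there is a unique minimizer $\chi_i\in H$ of the energy $\psi\mapsto\E[(\unit{i}+\psi)\cdot\matA(\cdot,0)(\unit{i}+\psi)]$, characterized by the Euler-Lagrange identity $\E[(\unit{i}+\chi_i)\cdot\matA(\cdot,0)\mathrm{D}\zeta]=0$ for every bounded measurable $\zeta$. I would then define the homogenized coefficient by $\matAhom\unit{i}:=\E[\matA(\cdot,0)(\unit{i}+\chi_i)]$; diagonality of $\matAhom$ follows because $\matA$ is diagonal, so the variational problem decouples into $d$ independent scalar problems indexed by $i$.

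Second, I would upgrade $\chi_i$ to a function $\phi_i\colon\Omega\times\Z^d\to\R$ by path integration. For $\P$-a.e.\ $\omega$, set $\phi_i(\omega,0)=0$ and define $\phi_i(\omega,x)$ along any nearest-neighbour path from $0$ to $x$ via telescoping increments of the form $(\chi_i)_k(\tau_y\omega)$. Path independence, i.e.\ the cocycle identity on elementary plaquettes, holds pointwise for every $\mathrm{D}\zeta$ as a trivial algebraic identity, and it persists in the $H$-limit on a set of full measure, using countability of $\Z^d$ to take the null set uniform in~$x$. By construction, $\P$-a.s.\ $\nabla\phi_i(\omega,x)=\chi_i(\tau_x\omega)$, so $\nabla\phi_i\in\Stat(\R^d)$; the pointwise PDE $-\nabla^*\cdot\matA(\unit{i}+\nabla\phi_i)=0$ on $\Z^d$ is then obtained by applying the Euler-Lagrange identity with $\zeta=\zeta_0\circ\tau_x$ for arbitrary test $\zeta_0$, transferring $\omega$-derivatives into $x$-derivatives through \cref{a07b} and the stationarity of $\P$.

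Finally, the energy bound $\E[(\nabla\phi_i\cdot\matA\nabla\phi_i)(\cdot,0)]\leq\E[\unit{i}\cdot\matA(\cdot,0)\unit{i}]\leq\|\matA(\cdot,0)\|_{L^1(\Omega,\R^{d\times d})}$ comes for free by comparing the minimum with the admissible competitor $\psi=0$. For the integrability $\E[|(\nabla\phi_i)(\cdot,0)|^{\nicefrac{2q}{q+1}}]<\infty$, the diagonal structure yields the pointwise inequality $|\xi|^2\leq(\xi\cdot\matA(\cdot,0)\xi)\,\|\matA(\cdot,0)^{-1}\|_{\mathrm{op}}$, and H\"older's inequality with conjugate exponents $\tfrac{q+1}{q}$ and $q+1$ then gives
\begin{equation*}
\E\bigl[|(\nabla\phi_i)(\cdot,0)|^{\nicefrac{2q}{q+1}}\bigr]\leq\E\bigl[(\nabla\phi_i\cdot\matA\nabla\phi_i)(\cdot,0)\bigr]^{\nicefrac{q}{q+1}}\,\E\bigl[\|\matA(\cdot,0)^{-1}\|_{\mathrm{op}}^{q}\bigr]^{\nicefrac{1}{q+1}},
\end{equation*}
both factors being finite by the preceding energy bound and by hypothesis. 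The only step that requires real care, in my view, is the passage from $\chi_i$ to $\phi_i$ on a set of full measure: $\chi_i$ is only defined as an $L^2$-limit, so the cocycle identity on elementary plaquettes must be transferred from the approximating $\mathrm{D}\zeta$-fields using a diagonal argument over the countable set of loops in $\Z^d$ and using the $L^{\nicefrac{2q}{q+1}}$-integrability of $\chi_i$ obtained above to justify pointwise summation of increments.
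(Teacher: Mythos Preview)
Your proof is correct and follows essentially the same classical Hilbert-space construction as the paper's sketch: orthogonal projection onto the closure of $\{\mathrm{D}\zeta\}$ in the $\matA$-weighted $L^2$ space, extension to $\Z^d$ by stationarity, path integration to recover $\phi_i$, and H\"older's inequality for the $L^{2q/(q+1)}$ bound. One minor correction: your justification that $\matAhom$ is diagonal because ``the variational problem decouples'' is not valid---the gradient constraint couples the components of $\psi$, and diagonality in fact relies on the reflection invariance of $\P$ assumed elsewhere in the paper (cf.\ the remark after \cref{y14}); however, since $\matAhom$ does not enter any of the conclusions of this particular lemma, this slip is inconsequential here.
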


\begin{proof}[Sketch of the proof of \cref{a31c}]
Typically (see, e.g., Section 3.2 in~\cite{Bis11}) 
in order to construct
the corrector $\phi$ 
we introduce  the Hilbert subspace $H^2_\nabla$ defined as the closure of $\{\mathrm{D}\zeta\mid \zeta\in L^\infty(\Omega,\R)\}$ with respect to 
the scalar product $(u,v)_{L^2_\mathrm{cov}}:= \E[
\matA(\cdot,0)
u \cdot v]$. Next, we
 define $ (\nabla \phi_i+\unit{i})(\cdot,0)\colon \Omega\to \R^d$, $i\in [1,d]\cap\N$ as orthogonal projections of $\unit{i}$, $i\in [1,d]\cap\N$, onto 
$H^2_\nabla$ and define the homogenized matrix $\matAhom$ with the property  that 
$\forall\, i\in [1,d]\cap\N\colon 
  \matAhom \unit{i}=\E[ \left(\matA\left(\nabla\phi_i+\unit{i}\right)\right)(\cdot,0) ].
$
The requirement that $\forall\, i\in[1,d]\cap\N\colon \nabla\phi_i\in \Stat(\R^d)$ then allows
to define the so-called harmonic coordinates
$(\nabla \phi_i+\unit{i})\colon \Omega\times\Z^d\to \R^d $, $i\in[1,d]\cap\N$,
as extensions of those orthogonal projections, i.e.,
as the functions which satisfy for every $x\in\Z^d$, $i\in[1,d]\cap\N$, and $\P$-a.e. $\omega\in\Omega$ that
$(\nabla \phi_i+\unit{i})(\omega,x)= (\nabla \phi_i+\unit{i})(\tau_x\omega,0)$.
\cref{a07b}, partial integration, the projection  property, and the stationary property ensure for all 
$x\in\Z^d$, $i\in[1,d]\cap\N$, $\zeta\in L^\infty(\Omega,\R)$
that $
\E[(\nabla^*\cdot \matA(\nabla \phi_i+\unit{i}))(\cdot,x)\zeta]=
\E[(\mathrm{D}^*\cdot \matA(\nabla \phi_i+\unit{i}))(\cdot,x)\zeta]=
\E[(\matA(\nabla \phi_i+\unit{i}))(\cdot,x)\cdot\mathrm{D}\zeta]=0$ and hence $\P$-a.s. $\nabla^*\cdot \matA(\nabla \phi_i+\unit{i})=0$. After having constructed $\nabla \phi_i$, $i\in [1,d]\cap\N $, we define its primitive $\phi_i$, $i\in [1,d]\cap\N $, 
(with $\phi(\cdot,0)=0$ fixed)
by discrete contour integrals, which does not depend on the choice of paths (due to the stationary property). The integrability of $\nabla\phi$, i.e., the fact that 
$\forall\,i\in[1,d]\cap\N\colon \E\!\left[ |(\nabla\phi_i)(\cdot,0)|^{2q/(q+1)}\right] <\infty$ easily follows from H\"older's inequality.
\end{proof}

\begin{lemma}[Weak convergence]\label{z03}Let
$L^2_{\nabla^*}$ be the closure of 
$\{\mathrm{D}^*\zeta\mid\zeta\in L^\infty(\Omega)\}$ in  $L^2(\Omega,\R ^d)$,
 let $f\in L^2(\Omega,\R^d)$, and let $ \xi$ be the orthogonal projection of $f$ onto $L^2_{\nabla^*}$.
Then there exists a unique family $(u_\epsilon)_{\epsilon\in (0,1)}\subseteq L^2(\Omega,\R)$ 
and there exists a sequence $(\epsilon_n)_{n\in\N}\subseteq(0,1)$
such that
 for all
$\epsilon\in (0,1)$ it holds that $
\left(\epsilon + \mathrm{D}\cdot \mathrm{D}^* \right)u_\epsilon=\mathrm{D} \cdot f$
and such that for all $G\in L^2(\Omega,\R^d) $ it holds that
 $\lim_{n\to\infty}
\E[ \mathrm{D}^* u_{\epsilon_n}\cdot G]=\E[ \xi\cdot G]$.
\end{lemma}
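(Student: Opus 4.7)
The plan is to treat \cref{z03} by a standard Lax--Milgram construction followed by a weak-compactness argument in the Hilbert space $L^2(\Omega)$.

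First, for each $\epsilon\in(0,1)$ I would produce $u_\epsilon\in L^2(\Omega,\R)$ as the unique solution of the variational problem
\[
\epsilon\,\E[u_\epsilon\zeta]+\E[\mathrm{D}^*u_\epsilon\cdot\mathrm{D}^*\zeta]=\E[f\cdot\mathrm{D}^*\zeta]\qquad\text{for all }\zeta\in L^2(\Omega,\R).
\]
Stationarity of $\P$ yields the discrete integration by parts $\E[(\mathrm{D}_ig)h]=\E[g\,(\mathrm{D}_i^*h)]$, which converts this identity into the pointwise equation $(\epsilon+\mathrm{D}\cdot\mathrm{D}^*)u_\epsilon=\mathrm{D}\cdot f$ in $L^2(\Omega,\R)$. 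The bilinear form on the left is bounded on $L^2(\Omega,\R)$ because stationarity gives $\|\mathrm{D}^*u\|_{L^2}\le 2\sqrt{d}\,\|u\|_{L^2}$, and it is coercive with constant $\epsilon>0$; Lax--Milgram thus provides existence and uniqueness of $u_\epsilon$, and testing with $\zeta=u_\epsilon$ gives the $\epsilon$-uniform a priori estimate
\[
\epsilon\,\|u_\epsilon\|_{L^2(\Omega,\R)}^2+\|\mathrm{D}^*u_\epsilon\|_{L^2(\Omega,\R^d)}^2\le\|f\|_{L^2(\Omega,\R^d)}^2.
\]

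Second, I would extract a weakly converging subsequence and identify its limit with $\xi$. The uniform bound $\|\mathrm{D}^*u_\epsilon\|_{L^2}\le\|f\|_{L^2}$ and Banach--Alaoglu produce a sequence $\epsilon_n\downarrow 0$ and $\eta\in L^2(\Omega,\R^d)$ with $\mathrm{D}^*u_{\epsilon_n}\rightharpoonup\eta$. For any $\zeta\in L^\infty(\Omega,\R)$ the a priori estimate gives $|\epsilon_n\E[u_{\epsilon_n}\zeta]|\le\sqrt{\epsilon_n}\,\|f\|_{L^2}\,\|\zeta\|_{L^2}\to 0$, so passing to the weak limit in the variational identity yields $\E[(f-\eta)\cdot\mathrm{D}^*\zeta]=0$ for every $\zeta\in L^\infty(\Omega)$, and by density this extends to $\E[(f-\eta)\cdot G]=0$ for every $G\in L^2_{\nabla^*}$. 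To conclude $\eta=\xi$ it then remains to show $\eta\in L^2_{\nabla^*}$; since the subspace is closed (hence weakly closed), it suffices to verify $\mathrm{D}^*u_\epsilon\in L^2_{\nabla^*}$ for each $\epsilon$. I would obtain this by truncation: writing $u_\epsilon^{(k)}=((-k)\vee u_\epsilon)\wedge k\in L^\infty(\Omega)$, each $\mathrm{D}^*u_\epsilon^{(k)}$ lies in the generating family of $L^2_{\nabla^*}$, and the $L^2$-boundedness of the shift operator together with dominated convergence gives $\mathrm{D}^*u_\epsilon^{(k)}\to\mathrm{D}^*u_\epsilon$ in $L^2$ as $k\to\infty$.

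Third, to pass from $G\in L^2_{\nabla^*}$ to arbitrary $G\in L^2(\Omega,\R^d)$ I would decompose $G=G_1+G_2$ with $G_1\in L^2_{\nabla^*}$ and $G_2\in L^2_{\nabla^*}{}^\perp$. The component $G_2$ contributes nothing because $\mathrm{D}^*u_{\epsilon_n}\in L^2_{\nabla^*}$ by the previous step, whereas $\E[\mathrm{D}^*u_{\epsilon_n}\cdot G_1]\to\E[\xi\cdot G_1]=\E[\xi\cdot G]$ follows from weak convergence and from $\xi\in L^2_{\nabla^*}\perp G_2$. The one mildly non-routine point in this scheme is the truncation argument establishing $\mathrm{D}^*u_\epsilon\in L^2_{\nabla^*}$, which is forced by the fact that $L^2_{\nabla^*}$ is defined as the closure of gradients of $L^\infty$ functions rather than of $L^2$ functions; everything else is standard Hilbert-space and weak-compactness manipulation.
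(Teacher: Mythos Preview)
Your proof is correct and follows essentially the same route as the paper's: Riesz/Lax--Milgram for existence and uniqueness of $u_\epsilon$, the a priori bound $\epsilon\|u_\epsilon\|_{L^2}^2+\|\mathrm{D}^*u_\epsilon\|_{L^2}^2\le\|f\|_{L^2}^2$, weak compactness to extract $\mathrm{D}^*u_{\epsilon_n}\rightharpoonup\eta$, and passing to the limit in the variational identity to identify $\eta=\xi$. You are in fact more careful than the paper on one point---the paper simply asserts that $\eta$ is the projection without explicitly checking $\eta\in L^2_{\nabla^*}$, whereas your truncation argument fills this in; conversely, your third paragraph is redundant, since once $\mathrm{D}^*u_{\epsilon_n}\rightharpoonup\eta$ holds in all of $L^2(\Omega,\R^d)$ and $\eta=\xi$, the conclusion for arbitrary $G$ is immediate.
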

\begin{proof}[Proof of \cref{z03}]For every 
$\epsilon\in (0,1)$ let  $(\cdot,\cdot )_\epsilon\colon L^2(\Omega,\R)\to\R$ 
be the function which satisfies
for all $u,v\in L^2(\Omega,\R)$  that
$(u,v)_{\epsilon}= \E[\epsilon uv +
\mathrm{D}^* u\cdot \mathrm{D}^*v].$ Then for all $\epsilon\in (0,1)$ it holds that
$L^2(\Omega,\R) $ equipped with $(\cdot,\cdot)_\epsilon$ is a Hilbert space.
Furthermore, the Cauchy-Schwarz inequality shows that
for all $\epsilon\in (0,1)$, $v\in L^2(\Omega,\R)$ it holds that
$
 \left|\E[  f\cdot \mathrm{D}^*v]\right|\leq \left(\E[ |f|^2] \right)^{\!\nicefrac{1}{2}}
\left(\E[ |\mathrm{D}^*v|^2] \right)^{\!\nicefrac{1}{2}}
\leq \left(\E[ |f|^2] \right)^{\!\nicefrac{1}{2}}\left(v,v\right)_\epsilon^{\!\nicefrac{1}{2}}.
$
This implies for all $\epsilon\in (0,1)$ that  $L^2(\Omega,\R)\ni v\mapsto \E [f\cdot \mathrm{D}^*v]\in\R  $ is  continuous with respect to $(\cdot,\cdot )_\epsilon$. 
For every  $\epsilon\in (0,1)$ let $u_\epsilon \in L^2(\Omega,\R)$ be the function
which exists by Riesz' lemma with the property that $(u_\epsilon,v)_\epsilon=\E[f\cdot \mathrm{D}^*v]$.
This, the partial integration, and the definition of $(\cdot,\cdot )_\epsilon$, $\epsilon\in (0,1)$, 
show that for all $ v\in L^2(\Omega)$, $\epsilon\in (0,1) $ it holds that 
$
\E[\epsilon  u_\epsilon v]+ \E[\mathrm{D}\cdot \mathrm{D}^*u_\epsilon)  v]=
\E[(\epsilon  u_\epsilon v]+ \E[ (\mathrm{D}^*u_\epsilon)\cdot (\mathrm{D}^*  v)]= 
(u_\epsilon,v)_\epsilon=\E[f\cdot \mathrm{D}^*v]
= \E[ (\mathrm{D}\cdot f) v]
$ and hence $\left(\epsilon + \mathrm{D}\cdot \mathrm{D}^* \right)u_\epsilon=\mathrm{D} \cdot f$. To show uniqueness let $\tilde
{u}_\epsilon \in L^2(\Omega,\R)$, $\epsilon\in (0,1)$, satisfy
for all 
  $\epsilon\in (0,1)$ that $\left(\epsilon + \mathrm{D}\cdot \mathrm{D}^* \right)\tilde{u}_\epsilon=\mathrm{D} \cdot f$. 
The partial integration and the fact that $\forall\,\epsilon\in (0,1)\colon \left(\epsilon + \mathrm{D}\cdot \mathrm{D}^* \right)(u_\epsilon-\tilde{u}_\epsilon)=0$    show 
for all $\epsilon\in (0,1)$ that
$
\E [\epsilon(u_\epsilon-\tilde{u}_\epsilon)^2 +|\mathrm{D}^*u_\epsilon-\mathrm{D}^*\tilde{u}_\epsilon|^2]=
\E[((\epsilon + \mathrm{D}\cdot \mathrm{D}^* )(u_\epsilon-\tilde{u}_\epsilon))(u_\epsilon-\tilde{u}_\epsilon)]=0$ and hence
$\P$-a.s. $u_\epsilon=\tilde{u}_\epsilon$.
Next, the fact that $\forall\,\epsilon\in (0,1)\colon (u_\epsilon,v)_\epsilon=\E[f\cdot \mathrm{D}^*v]$, the Cauchy-Schwarz inequality, and the definition of $(\cdot,\cdot )_\epsilon$, $\epsilon\in (0,1)$, imply for all $\epsilon\in (0,1)$ that
$(u_\epsilon,u_\epsilon)_\epsilon=
\left|\E[ f\cdot \mathrm{D}^*u_\epsilon]\right|
\leq (\E[|f|^2] )^{\!\nicefrac{1}{2}}(\E [|\mathrm{D}^*u_\epsilon|^2])^{\!\nicefrac{1}{2}}
\leq (\E[|f|^2] )^{\!\nicefrac{1}{2}}\left(u_\epsilon,u_\epsilon\right)_\epsilon^{\!\nicefrac{1}{2}}
$, therefore $ \E[\epsilon|u_\epsilon|^2+ |\mathrm{D}^*u_\epsilon|^2]=  (u_\epsilon,u_\epsilon)_\epsilon \leq \E[|f|^2]$. 
This implies that there exist $(\epsilon_n)_{n\in\N}\subseteq(0,1)$
and $\eta \in L^2(\Omega,\R^d)$ which satisfy for all $G\in L^2(\Omega,\R^d) $, $v\in L^2(\Omega,\R)$ that
 $\lim_{n\to\infty}
\E[ \mathrm{D}^* u_{\epsilon_n}\cdot G]=\E[ \eta\cdot G]$ and 
$\lim_{n\to\infty}
\epsilon_n \E [u_{\epsilon_n} v]=0$.
Hence,
$\E[ \eta\cdot \mathrm{D}^*v]=
\lim_{n\to\infty}(
\epsilon_n \E [u_{\epsilon_n} v]+ \E[\mathrm{D}^*u_{\epsilon_n}\cdot \mathrm{D}^*v] )= \E [f\cdot \mathrm{D}^*v]
$. Thus, $\eta$ is the projection of $f$ onto $L^2_{\nabla^*}$ and  $\xi=\eta$.
This completes the proof of \cref{z03}.
\end{proof}
\subsection{A Meyer-type estimate}
\cref{b08} below adapts the $L^r$-estimate, $r\in (1,\infty)$, in (21) into the discrete. Although it is not really a difficult issue, we still need to find some suitable results in the literature.
\begin{lemma}[A Meyer-type estimate on the probability space]
\label{b08}Let $r\in (1,\infty)$, let 
$L^2_{\nabla^*}$ be the closure of 
$\{\mathrm{D}^*\zeta\mid \zeta\in L^\infty(\Omega)\}$ in  $L^2(\Omega,\R ^d)$, and for every $f\in L^\infty(\Omega,\R ^d)$ let $\mathcal{H}f  $ be  the projection of $f$ onto $L^2_{\nabla^*}$.
Then 
there exists $c\in (0,\infty)$ such that for all
$f\in L^\infty(\Omega,\R ^d)$ it holds that
$
\|\mathcal{H}f\|_{L^r(\Omega,\R^d)}\leq c \|f\|_{L^r(\Omega,\R^d)}.
$
\end{lemma}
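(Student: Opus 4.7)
The strategy is to combine the resolvent representation of \cref{z03} with a transference mechanism between $\ell^r(\Z^d)$ and $L^r(\Omega,\P)$ provided by the $\Z^d$-action $(\tau_x)_{x\in\Z^d}$. First, given $f\in L^\infty(\Omega,\R^d)$, \cref{z03} yields a family $(u_\epsilon)_{\epsilon\in(0,1)}\subseteq L^2(\Omega,\R)$ with $(\epsilon+\mathrm{D}\cdot\mathrm{D}^*)u_\epsilon=\mathrm{D}\cdot f$ and a subsequence $(\epsilon_n)$ such that $\mathrm{D}^*u_{\epsilon_n}\rightharpoonup \mathcal{H}f$ weakly in $L^2(\Omega,\R^d)$. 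My plan is to establish the uniform-in-$\epsilon$ bound
\begin{equation*}
\|\mathrm{D}^*u_\epsilon\|_{L^r(\Omega,\R^d)}\le c\,\|f\|_{L^r(\Omega,\R^d)}
\end{equation*}
and then to conclude by passing to the weak limit, using weak-$\ast$ compactness of norm-bounded sets in $L^r$ (which is available since $r>1$) and the fact that weak $L^2$-convergence identifies the limit.

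For the uniform bound I would argue by transference to $\Z^d$. For $\P$-a.e.\ $\omega$, the pullbacks $v_\epsilon^\omega(x):=u_\epsilon(\tau_x\omega)$ and $g^\omega(x):=f(\tau_x\omega)$ satisfy the deterministic resolvent equation $(\epsilon+\nabla\cdot\nabla^*)v_\epsilon^\omega=\nabla\cdot g^\omega$ on $\Z^d$; equivalently, $\nabla^*v_\epsilon^\omega=T_\epsilon g^\omega$, where $T_\epsilon$ is the $\R^{d\times d}$-valued convolution operator on $\Z^d$ with Fourier symbol
\begin{equation*}
m_\epsilon(\theta)_{ij}=\frac{(e^{-i\theta_i}-1)(e^{i\theta_j}-1)}{\epsilon+\sum_{k=1}^{d}4\sin^2(\theta_k/2)},\qquad \theta\in[-\pi,\pi]^d.
\end{equation*}
The symbols $m_\epsilon$ are bounded by $1$ and satisfy the Mikhlin--H\"ormander condition with constants independent of $\epsilon\ge 0$, so that the discrete, matrix-valued Calder\'on--Zygmund theorem gives $\|T_\epsilon\|_{\ell^r(\Z^d)\to\ell^r(\Z^d)}\le c_r$ uniformly in $\epsilon$. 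Because $\pi_x\zeta(\omega):=\zeta(\tau_x\omega)$ defines a strongly continuous, measure-preserving representation of $\Z^d$ on $L^r(\Omega,\P)$ and since $\mathrm{D}_i=\pi_{\unit{i}}-I$, the operator $\mathrm{D}^*(\epsilon+\mathrm{D}\cdot\mathrm{D}^*)^{-1}\mathrm{D}\cdot$ is precisely the Coifman--Weiss transfer of $T_\epsilon$, and therefore inherits the same $L^r\to L^r$ bound. Combined with Step~1 and the lower semicontinuity of the $L^r$-norm along weakly convergent sequences this yields $\mathcal{H}f\in L^r$ with $\|\mathcal{H}f\|_{L^r}\le c_r\|f\|_{L^r}$.

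The hard part will be the transference step. Two technical points need care: verifying the Mikhlin--H\"ormander hypothesis for the matrix-valued symbol $m_\epsilon$ on the torus uniformly in $\epsilon$ (only the origin produces a singularity, and one has to check the cancellation $|(e^{-i\theta_i}-1)(e^{i\theta_j}-1)|\lesssim \sum_k 4\sin^2(\theta_k/2)$ together with analogous derivative estimates), and making the Coifman--Weiss transfer rigorous for vector-valued inputs and matrix-valued Fourier multipliers in a measure-preserving (but not necessarily metrically tame) probability action. An equivalent route that bypasses multiplier theory is to invoke Stein's theorem on Riesz transforms of symmetric Markov semigroups: $\mathrm{D}\cdot\mathrm{D}^*$ generates precisely the symmetric Markov semigroup of the continuous-time simple random walk on $\Z^d$ viewed from the particle through $\tau$, and writing $\mathcal{H}=\mathrm{D}^*(\mathrm{D}\cdot\mathrm{D}^*)^{-1/2}(\mathrm{D}\cdot\mathrm{D}^*)^{-1/2}\mathrm{D}\cdot$ displays it as a composition of Riesz transforms that are $L^r$-bounded for all $r\in(1,\infty)$. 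Either route reduces the lemma to a well-established tool; the delicate bookkeeping of the transference (or, equivalently, of the square-root decomposition on the zero-mean subspace) is where the actual work lies.
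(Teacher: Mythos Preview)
Your proposal is correct and reaches the same conclusion, but the route is genuinely different from the paper's.

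Both arguments share the resolvent approximation from \cref{z03} and the passage to the weak limit at the end. The difference is in how the uniform-in-$\epsilon$ bound $\|\mathrm{D}^*u_\epsilon\|_{L^r(\Omega)}\le c\|f\|_{L^r(\Omega)}$ is obtained. The paper works spatially: it uses pointwise decay estimates for the discrete Green function $G_T$ and its first and second differences (taken from \cite{Mou13}), then invokes a Calder\'on--Zygmund ``good and bad boxes'' argument (from \cite{BSW14}) to get a deterministic $\ell^r$-estimate on large boxes $D_R$, and finally lets $R\to\infty$ and applies the ergodic theorem together with stationarity to convert the spatial average into an $L^r(\Omega,\P)$-bound. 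Your route is more abstract: you identify $\mathrm{D}^*(\epsilon+\mathrm{D}\cdot\mathrm{D}^*)^{-1}\mathrm{D}\cdot$ as the Coifman--Weiss transfer of a matrix-valued Fourier multiplier on $\Z^d$ and read off the $L^r(\Omega)$-bound from the $\ell^r(\Z^d)$-bound (via Mikhlin--H\"ormander on the torus, or alternatively via Stein's Riesz-transform theorem for symmetric Markov semigroups). The paper's approach is more self-contained and stays close to the references already used elsewhere in the construction of the correctors; your approach is shorter and conceptually cleaner, at the cost of importing heavier machinery (discrete multiplier theory and transference). The two technical points you flag---uniform Mikhlin bounds for the symbol $m_\epsilon$ near $\theta=0$ and the vector-valued transference---are exactly where the work lies, and both are standard; for $\epsilon>0$ the resolvent kernel is even in $\ell^1(\Z^d)$, which makes the transference step entirely soft.
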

\begin{proof}[Proof of \cref{b08}]
Let 
  $ G_T\colon \Z^d\times\Z^d\to\R$, $T\in(0,\infty)$, be the so-called Green functions
which satisfy  for all $T\in(0,\infty)$ and for all bounded functions $f\colon \Z^d\to\R$ that
the unique solution $u$ to $\forall\,x\in\Z^d\colon  ((T^{-1}-\Laplace) u)(x)=f(x) $ satisfies for all $x\in \Z^d$ that $ u(x)=\sum_{y\in\Z^d}G_T(x,y)f(y) $ (for a detailed result on existence and uniqueness see, e.g., Theorem~4.1 in~\cite{Mou13}).
For every 
$T\in (0,\infty)$, 
$f\in\Stat(\R^d)$ with $f(\cdot,0)\in L^\infty(\Omega,\R^d)$ let 
$u_T^f\colon \Omega\times \Z^d\to \R $ be the function which satisfies 
for all $\omega\in\Omega$, $x\in\Z^d$ 
 that  $u^f_T(\omega,x)=\sum_{y\in\Z^d}G_T(x,y)(\nabla\cdot f)(\omega,y) $. 
Then for all
$T\in (0,\infty)$, 
$f\in\Stat(\R^d)$ with $f(\cdot,0)\in L^\infty(\Omega,\R^d)$ it holds that
$u^f_T\in \Stat(\R)$ and
 $\P$-a.s.
$(T^{-1}-\Laplace) u^f_T= \nabla \cdot f$ and therefore (by \cref{a07b}) it holds  $\P$-a.s.
$(T^{-1}-\mathrm{D}\cdot \mathrm{D}^*) u^f_T= \mathrm{D} \cdot f$. Next, observe that
the first and second derivatives of the Green functions (see, e.g., Proposition 3.7 in~\cite{Mou13}) satisfy that 
there exist $c_1,c_2\in (0,\infty)$ such that  for all $x,y\in\Z^d$  it holds that
$
|\nabla_x G_T(x,y)|\leq c_1(1\SUP |x-y|)^{-(d-1)} \exp(-\frac{1}{\sqrt{T}}c_2|x-y|)
$
and
$
|\nabla_x\nabla_ y G_T(x,y)|\leq c_1(1\SUP |x-y|)^{-d} \exp(-\frac{1}{\sqrt{T}}c_2|x-y|)
.$ 
In addition, partial integration shows  for all $f\colon \Z^d \to\R^d$ with finite support and for all $x\in\Z^d$  that
$u^f_T(x)=\sum_{y\in\Z^d}G_T(x,y)(\nabla\cdot f)(\omega,y) $
and $(\nabla^*u^f_T)(x)=\sum_{y\in\Z^d}\nabla^*\nabla^*G_T(x,y) f(y) $.
Using the standard argument with good and bad boxes to prove a Meyer-type estimate in Section 4 in~\cite{BSW14} (especially the estimate on the third derivatives of the Green functions) we  easily adapt (22) in~\cite{BFO18} into the discrete case. 
Continuing with a purely deterministic argument similar to that in the continuum case  (the paragraph below (22) in~\cite{BFO18})  
then  yields
that there exists $c\in (0,\infty)$ such that for every $R\in\N$, $T\in (0,\infty)$, 
$f\in\Stat(\R^d)$ with $f(\cdot,0)\in L^\infty(\Omega,\R^d)$, and $\P$-a.e. $\omega\in \Omega$  
it holds that
\begin{align}
\avnorm{(\nabla^* u_T^f)(\omega,\cdot)}{r}{D_{R/2}}^r\leq ce^{-crR/\sqrt{T}} \left[\sup_{x\in\Z^d} |f(\omega,x)|\right]+c\avnorm{f(\omega,\cdot)}{r}{D_{2R}}.\end{align}
Letting $R$ tend to infinity, the fact that $\forall \,T\in (0,T),f\in \Stat(\R^d)\colon u_T^f\in \Stat(\R)$, and the ergodic theorem show that there exists $c\in (0,\infty)$
such that
for all $T\in (0,\infty)$, 
$f\in\Stat(\R^d)$  with $f(\cdot,0)\in L^\infty(\Omega,\R^d)$ it holds that 
\begin{align}\label{k124}
\|(\mathrm{D}^* u^f_T) (\cdot,0)\|_{L^r(\Omega,\R^d)}=
\|(\nabla^* u^f_T) (\cdot,0)\|_{L^r(\Omega,\R^d)}\leq c\|f(\cdot,0)\|_{L^r(\Omega,\R^d)}.
\end{align}
 Now, for every $f\in L^\infty(\Omega,\R^d)$ let $\hat{f}\in \Stat(\R^d)$ be the function which satisfies for all $\omega\in\Omega$, $x\in \R^d$ that $\hat{f}(\omega,x)=f(\tau_x\omega)$.
 \cref{z03} and the fact that $\forall\,T\in (0,T)\colon (T^{-1}-\mathrm{D}\cdot \mathrm{D}^*) u^f_T= \mathrm{D} \cdot f$ then imply
 for all $f\in L^\infty(\Omega,\R^d)$  that there exists a sequence $(T_n)_{n\in \N}\subseteq(0,\infty)$ such that for all 
$G\in L^2(\Omega,\R^d) $ it holds that
 $\lim_{n\to\infty}
\E[ \mathrm{D}^* u_{T_n}^{\hat{f}}\cdot G]=\E[ (\mathcal{H}f)\cdot G]$. This, \eqref{k124}, and a simple duality argument complete the proof of \cref{b08}.
\end{proof}
\subsection{Sublinearity via an approximating argument}
In \cref{b01} below we use an approximating argument rather than Sobolev's embedding and weak convergence as done in p.~1394 in~\cite{BFO18}. Therefore, our result includes the case when $1/p+1/q\leq 2/d$.
\begin{lemma}[A density argument]\label{b01}
Let $p,q\in (1,\infty]$ satisfy that $1/p+1/q\leq 2/d$,  let
$L^{2p/(p+1)}_{\nabla^*}$ be the closure of 
$\{\mathrm{D}^*\zeta\mid \zeta\in L^\infty(\Omega)\}$ in  $L^{2p/(p+1)}(\Omega,\R ^d)$,
and let
$u\colon \Omega\times\Z^d\to\R$ be measurable and satisfy that
$\nabla^* u \in\Stat(\R^d)$ and $(\nabla^* u)(\cdot,0)\in L^{2p/(p+1)}_{\nabla^*} $. Then it holds that
\begin{align}
\limsup_{R\to\infty}\left[ \frac1R\inf_{a\in \R }
\avnorm{u-a}{2q/(q-1)}{D_R}\right]=0
\end{align}
where $2p/(p+1):= 2$ if $p=\infty$ and $2q/(q-1):=2$ if $q=\infty$.
\end{lemma}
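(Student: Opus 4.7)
The plan is an approximation argument. Since $(\nabla^*u)(\cdot,0)$ lies in the closure of $\{\mathrm{D}^*\zeta:\zeta\in L^\infty(\Omega)\}$ in $L^{2p/(p+1)}(\Omega,\R^d)$, we approximate it by $\mathrm{D}^*\zeta_\epsilon$ with $\zeta_\epsilon\in L^\infty(\Omega)$; the stationary extension built from such a $\zeta_\epsilon$ is bounded (hence trivially sublinear), and the approximation error is controlled by a discrete Sobolev--Poincar\'e inequality whose exponent condition matches exactly $1/p+1/q\leq 2/d$.

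Shifting by a constant we may assume $u(\omega,0)=0$. Fix $\epsilon>0$ and pick $\zeta_\epsilon\in L^\infty(\Omega,\R)$ with
$\|(\nabla^*u)(\cdot,0)-\mathrm{D}^*\zeta_\epsilon\|_{L^{2p/(p+1)}(\Omega,\R^d)}\leq\epsilon$. Define $u_\epsilon\colon \Omega\times\Z^d\to\R$ by $u_\epsilon(\omega,x):=\zeta_\epsilon(\tau_x\omega)-\zeta_\epsilon(\omega)$. Then $u_\epsilon(\omega,0)=0$ and $|u_\epsilon(\omega,x)|\leq 2\|\zeta_\epsilon\|_\infty$ for every $x$ and $\P$-a.e.\ $\omega$. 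A direct computation, combined with \cref{a07b} applied to the stationary function $\omega\mapsto \zeta_\epsilon(\tau_x\omega)$, shows $\nabla^*u_\epsilon\in\Stat(\R^d)$ with $(\nabla^*u_\epsilon)(\omega,0)=(\mathrm{D}^*\zeta_\epsilon)(\omega)$; hence $\nabla^*(u-u_\epsilon)\in\Stat(\R^d)$ has value at the origin equal to $(\nabla^*u)(\cdot,0)-\mathrm{D}^*\zeta_\epsilon$.

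The key analytic input is the averaged-norm form of the discrete Sobolev--Poincar\'e inequality: there exists $C\in(0,\infty)$, depending only on $d,p,q$, such that for every $R\in\N$ and every $w\colon\overline D_R\to\R$,
\begin{align}
\inf_{a\in\R}\avnorm{w-a}{\frac{2q}{q-1}}{D_R}\leq C\,R\,\avnorm{\nabla^* w}{\frac{2p}{p+1}}{D_R}.
\end{align}
Indeed, the Sobolev exponent relation reads $\tfrac{p+1}{2p}-\tfrac{q-1}{2q}=\tfrac{1}{2p}+\tfrac{1}{2q}\leq\tfrac{1}{d}$, which is exactly $1/p+1/q\leq 2/d$; when the inequality is strict one first applies the critical discrete Sobolev--Poincar\'e embedding (at exponent $r^\ast$ with $1/r^\ast=\tfrac{p+1}{2p}-\tfrac{1}{d}$) and then Jensen's inequality for the averaged measure $|D_R|^{-1}\sum_{x\in D_R}(\cdot)$ to descend from $r^\ast$ to $2q/(q-1)$. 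Applying this inequality to $w:=u-u_\epsilon$ and invoking the triangle inequality on $u=(u-u_\epsilon)+u_\epsilon$ gives
\begin{align}
\frac{1}{R}\inf_{a\in\R}\avnorm{u-a}{\frac{2q}{q-1}}{D_R}\leq \frac{2\|\zeta_\epsilon\|_\infty}{R}+C\,\avnorm{\nabla^*(u-u_\epsilon)(\omega,\cdot)}{\frac{2p}{p+1}}{D_R}.
\end{align}
The first term vanishes as $R\to\infty$ for every $\omega$. For the second, the spatial ergodic theorem applied to the $L^1(\Omega)$ field $\omega\mapsto|(\nabla^*u)(\cdot,0)-\mathrm{D}^*\zeta_\epsilon|^{2p/(p+1)}(\omega)$ gives, for $\P$-a.e.\ $\omega$,
\begin{align}
\lim_{R\to\infty}\avnorm{\nabla^*(u-u_\epsilon)(\omega,\cdot)}{\frac{2p}{p+1}}{D_R}^{\frac{2p}{p+1}}=\E\!\left[|(\nabla^*u)(\cdot,0)-\mathrm{D}^*\zeta_\epsilon|^{\frac{2p}{p+1}}\right]\leq \epsilon^{\frac{2p}{p+1}}.
\end{align}
Hence $\limsup_{R\to\infty}\tfrac{1}{R}\inf_a\avnorm{u-a}{2q/(q-1)}{D_R}\leq C\epsilon$ outside a $\P$-null set $N_\epsilon$. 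Choosing a countable sequence $\epsilon_n\downarrow 0$ and intersecting the null sets $N_{\epsilon_n}$ concludes the proof.

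The main obstacle is locating or verifying the discrete Sobolev--Poincar\'e inequality at the precise averaged exponents above (with the scaling factor $R$ and the correct dimensional exchange), and handling the endpoint cases $p=\infty$ or $q=\infty$ consistently with the conventions $2p/(p+1):=2$ and $2q/(q-1):=2$. Inequalities of this type are by now standard in the random conductance literature (cf.\ \cite{ADS15,BS19}), but one must take care with the averaged-vs-unaveraged normalization and with the passage from the critical Sobolev exponent to $2q/(q-1)$ via Jensen when $1/p+1/q<2/d$.
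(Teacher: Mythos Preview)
Your proposal is correct and follows essentially the same approach as the paper: approximate $(\nabla^*u)(\cdot,0)$ by $\mathrm{D}^*\zeta_\epsilon$ with $\zeta_\epsilon\in L^\infty(\Omega)$, use that the stationary extension $u_\epsilon(\omega,x)=\zeta_\epsilon(\tau_x\omega)-\zeta_\epsilon(\omega)$ is bounded and hence trivially sublinear, and control the remainder via the discrete Sobolev--Poincar\'e inequality combined with the ergodic theorem. The only cosmetic difference is that the paper packages the Sobolev and ergodic steps into a single functional $F(v)=\E[\limsup_{R\to\infty}\tfrac1R\inf_a\avnorm{v-a}{2q/(q-1)}{D_R}]$ and proves $F(v)\leq c\|(\nabla^*v)(\cdot,0)\|_{L^{2p/(p+1)}}$, whereas you carry out the same estimate pathwise and handle the null sets by taking a countable sequence $\epsilon_n\downarrow 0$.
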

\begin{remark}
\cref{b01} still holds if we drop $*$ everywhere in its statement.
\label{b01b}
\end{remark}
\begin{proof}
[Proof of \cref{b01}]
Throughout this proof 
for every  measurable function 
$v\colon \Omega\times\Z^d\to\R$ with
$\nabla^*v \in\Stat(\R^d)$ and $(\nabla^* v)(\cdot,0)\in L^{2p/(p+1)}_{\nabla^*} $
let $F(v)\in[0,\infty] $ be the real extended number which satisfies that
$
F(v)= \E\!\left[ \limsup_{R\to\infty} \tfrac1R\inf_{a\in \R }
\avnorm{v-a}{2q/(q-1)}{D_R}\right].
$
Sobolev's inequality (combined with the assumption that $1/p+1/q\leq 2/d$) and the ergodic theorem then show that there exists $c\in (0,\infty)$ which satisfies  for all  measurable functions 
$v\colon \Omega\times\Z^d\to\R$ with
$\nabla^*v \in\Stat(\R^d)$ and $(\nabla^* v)(\cdot,0)\in L^{2p/(p+1)}_{\nabla^*} $  that
\begin{align}\label{b03}
F(v)\leq c
\E\!\left[\limsup_{R\to\infty} 
\avnorm{\nabla^* v}{2p/(p+1)}{D_{2R}} \right]=c\|(\nabla^* v)(\cdot,0)\|_{L^{2p/(p+1)}(\Omega,\R^d)}.
\end{align} 
Next, the assumption that $(\nabla^* u)(\cdot,0)\in L^{2p/(p+1)}_{\nabla^*} $
implies that there exists a sequence $(\zeta_n)_{n\in\N}\subseteq L^\infty(\Omega,\R) $ such that 
$
\lim_{n\to\infty}\|(\nabla^* u)(\cdot,0)-\mathrm{D}^*\zeta_n\|_{L^{2p/(p+1)}(\Omega,\R^d)}=0.$
For every  $n\in\N$ let $u_n\colon \Omega\times\Z^d\to\R$ be the measurable function which satisfies for $\P$-a.e. $\omega\in\Omega$ and $x\in\Z^d$ that $u_n(\omega,x)=\zeta_n(\tau_x\omega)-\zeta(\omega)$. Then it holds for all $n\in\N$ that
$u_n\in\Stat(\R)$ and hence $\P$-a.s. $(\nabla^* u_n) (\cdot,0)=(\mathrm{D}^*u_n)(\cdot,0)=\mathrm{D}^*\zeta_n$. This and the fact that $
\lim_{n\to\infty}\|(\nabla^* u)(\cdot,0)-\mathrm{D}^*\zeta_n\|_{L^{2p/(p+1)}(\Omega,\R^d)}=0$ imply that $\lim_{n\to\infty}\|(\nabla^* u)(\cdot,0)-(\nabla^*u_n)(\cdot,0)\|_{L^{2p/(p+1)}(\Omega,\R^d)}=0$. Hence, 
\eqref{b03} (with $v\defeq u-u_n$ for $n\in\N$) shows that $\lim_{n\to\infty}F(u-u_n)=0$.
Furthermore, the definition of $u_n$, $n\in\N$, the triangle inequality, and the fact that $\forall\,n\in\N\colon \zeta_n\in L^\infty(\Omega,\R)$ imply for all $n\in \N$   that $\P$-a.s.
$\limsup_{R\to\infty}\tfrac{1}{R}\avnorm{u_n}{2q/(q-1)}{D_R}\leq\limsup_{R\to\infty}\tfrac{2}{R} \|\zeta_n\|_{L^\infty(\Omega,\R)}=0$. This shows for all $n\in\N$ that $F(u_n)=0$. The triangle inequality and the fact that $\lim_{n\to\infty}F(u-u_n)=0$ then prove that $F(u)=0$.
This completes the proof of \cref{b01}.
\end{proof}
\cref{b05} below can be proved similarly to the argument from (25) to (26) in~\cite{BFO18}. We include the proof here for convenience.
\begin{lemma}[A dyadic argument]\label{b05}Let $u\colon \Z^d\to\R$ be a function, let $r\in [1,\infty]$ be a real extended number, and assume that
\begin{align}
\limsup_{R\to\infty} \left[\frac1R\inf_{a\in \R }
\avnorm{u-a}{r}{D_R}\right]=0.\end{align} 
Then it holds that
\begin{align}
\limsup_{R\to\infty} \left[\frac1R
\avnorm{u}{r}{D_R}\right]=0.
\end{align}
\end{lemma}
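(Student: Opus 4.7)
The plan is a classical dyadic telescoping argument. For each $R\in\N$ pick $a_R\in\R$ with $\avnorm{u-a_R}{r}{D_R}\leq 2\inf_{a\in\R}\avnorm{u-a}{r}{D_R}$ (such a near-minimizer exists by convexity and coercivity of $a\mapsto \avnorm{u-a}{r}{D_R}$), and set $\epsilon_R=R^{-1}\avnorm{u-a_R}{r}{D_R}$, so by hypothesis $\epsilon_R\to 0$.

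First I would compare $a_R$ and $a_{2R}$ at scale $R$. Since the difference is a constant, $|a_{2R}-a_R|=\avnorm{a_{2R}-a_R}{r}{D_R}$, and the triangle inequality together with the restriction estimate
\begin{align}
\avnorm{u-a_{2R}}{r}{D_R}^r
=\frac{1}{|D_R|}\sum_{x\in D_R}|u(x)-a_{2R}|^r
\leq \frac{|D_{2R}|}{|D_R|}\,\avnorm{u-a_{2R}}{r}{D_{2R}}^r
\end{align}
yields, with $C_d\defeq \sup_{R\in\N}(|D_{2R}|/|D_R|)^{1/r}<\infty$, the key one-step bound
\begin{align}
|a_{2R}-a_R|\leq \avnorm{u-a_R}{r}{D_R}+C_d\avnorm{u-a_{2R}}{r}{D_{2R}}
\leq R\,\epsilon_R+2C_d R\,\epsilon_{2R}.
\end{align}
(If $r=\infty$ the averaged norm coincides with the sup norm and the restriction bound is trivial with $C_d=1$.)

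Next I telescope along the dyadic sequence $R_k\defeq 2^k$:
\begin{align}
|a_{R_k}|\leq |a_{R_0}|+\sum_{j=0}^{k-1}|a_{R_{j+1}}-a_{R_j}|
\leq |a_{R_0}|+C\sum_{j=0}^{k-1}2^j(\epsilon_{R_j}+\epsilon_{R_{j+1}}).
\end{align}
The standard trick now is that $\epsilon_{R_j}\to 0$ makes this geometric sum sublinear in $R_k$: given $\delta>0$, pick $N$ with $\epsilon_{R_j}+\epsilon_{R_{j+1}}<\delta$ for $j\geq N$; then the tail $\sum_{j=N}^{k-1}2^j(\epsilon_{R_j}+\epsilon_{R_{j+1}})\leq \delta\cdot 2^k$, while the head $\sum_{j<N}$ is a bounded constant. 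Dividing by $R_k=2^k$ and letting $k\to\infty$, then $\delta\to 0$, gives $R_k^{-1}|a_{R_k}|\to 0$. Combining with the triangle inequality
$\avnorm{u}{r}{D_{R_k}}\leq \avnorm{u-a_{R_k}}{r}{D_{R_k}}+|a_{R_k}|=R_k\epsilon_{R_k}+|a_{R_k}|$
proves the claim along the dyadic subsequence.

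Finally, for general $R\in\N$ I would sandwich: choose $k$ with $R_k\leq R\leq R_{k+1}$; the same restriction estimate gives $\avnorm{u}{r}{D_R}\leq C_d\avnorm{u}{r}{D_{R_{k+1}}}$, and $R\geq R_{k+1}/2$, so $R^{-1}\avnorm{u}{r}{D_R}\leq 2C_d R_{k+1}^{-1}\avnorm{u}{r}{D_{R_{k+1}}}\to 0$. The main delicate point is the telescoping step: the geometric growth $R_k=2^k$ is essential, as it exactly matches the $o(R_j)$-type estimate on consecutive differences so that the divergent geometric series $\sum 2^j$ is tamed by the decaying factor $\epsilon_{R_j}$; a linear sequence of scales would not suffice.
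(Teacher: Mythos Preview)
Your proof is correct and follows essentially the same dyadic telescoping strategy as the paper: pick near-minimizers $a_R$, control $|a_{2R}-a_R|$ via the restriction estimate $\avnorm{\cdot}{r}{D_R}\leq (|D_{2R}|/|D_R|)^{1/r}\avnorm{\cdot}{r}{D_{2R}}$, telescope along dyadic scales, and finish with the triangle inequality. The only cosmetic difference is that the paper fixes $\delta>0$ first and chooses $R_0=R_0(\delta)$ so that every dyadic step is bounded by $3m^{1/r}\delta R$ (making the telescoping a straight geometric sum and handling all $R\geq R_0$ at once), whereas you keep the varying $\epsilon_R$ and use a head/tail split, then a sandwiching step for non-dyadic $R$; both routes are standard and equivalent.
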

\begin{proof}[Proof of \cref{b05}]Let $\delta\in (0,\infty)$ be arbitrary and fixed, let $R_0\in\N$ satisfy that for all $R\in [R_0,\infty)\cap\N$ it holds that $\inf_{a\in \R }
\avnorm{u-a}{r}{D_R}<\delta R$, let $a\colon [R_0,\infty)\cap\N\to \R $ be a function which satisfies for all $R\in  [R_0,\infty)\cap\N$ that $\avnorm{u-a(R)}{r}{D_R}<\delta R$,
and let $m\in [1,\infty]$ be the real extended number given by
$m=\sup_{R\in\N}|D_{2R}|/|D_{R}|$. The triangle inequality then implies for all $R,R'\in  [R_0,\infty)\cap\N$ with $R\leq R'\leq 2R$ that 
\begin{align}\begin{split}
&|a(R')-a(R)|\leq \avnorm{u-a(R)}{r}{D_R}+\avnorm{u-a(R')}{r}{D_R}\\&\leq 
\avnorm{u-a(R)}{r}{D_R}+\left[\frac{|D_{R'}|}{|D_{R}|}\right]^{\!1/r}\avnorm{u-a(R')}{r}{D_{R'}}\leq \delta R + m^{1/r}\delta 2R\leq 3m^{1/r}\delta R.
\end{split}\end{align}
This and the triangle inequality show for all $n\in \N_0$, $R\in [2^nR_0,2^{n+1}R_0)$ that
\begin{align}\begin{split}
|a(R)-a(R_0)|&\leq |a(R)-a(2^{n}R_0)|+\sum_{j=0}^{n-1}| a(2^{j+1}R_0)-a(2^jR_0)|\\
&\leq 3m^{1/r}\delta 2^nR_0+
\left[\sum_{j=0}^{n-1}
3m^{1/r}\delta 2^jR_0\right]\leq 3m^{1/r}\delta2^{n+1}R_0\leq 6m^{1/r}\delta R.
\end{split}\end{align}
We therefore obtain for all $R\in [R_0,\infty)\cap\N$ that $|a(R)-a(R_0)|\leq  6m^{1/r}\delta R$. The triangle inequality and the fact that 
$\forall\,R\in  [R_0,\infty)\cap\N\colon \avnorm{u-a(R)}{r}{D_R}<\delta R$ establish that
\begin{align}
\begin{split}
\left[
\limsup_{R\to\infty}
\tfrac{1}{R}
\avnorm{u}{r}{D_R}\right]&\leq 
\left[
\limsup_{R\to\infty}
\tfrac{1}{R}
\avnorm{u-a(R)}{r}{D_R}\right]+ \left[\limsup_{R\to\infty}
\tfrac{1}{R}
|a(R)-a(R_0)|\right]+\left[\limsup_{R\to\infty}
\tfrac{1}{R}
a(R_0)\right]
\\
&\leq \delta+ 6m^{1/r}\delta .
\end{split}
\end{align}
This, the fact that $\delta\in (0,\infty)$ was arbitrarily chosen, and the fact that $m<\infty$ complete the proof of \cref{b05}.
\end{proof}
\subsection{Conclusion}
\begin{lemma}\label{d20}
Let $r\in [1,\infty)$ and let $u\colon \Omega\times \Z^d\to\R $ be a measurable function which satisfies that $u\in \Stat(\R)$, $u(\cdot,0)\in L^r(\Omega,\R ) $, and $\P$-a.s. $\Laplace u =0 $. Then $\P$-a.s. $u=\E[u(\cdot,0)]$.
\end{lemma}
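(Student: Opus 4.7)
The plan is to translate the pointwise harmonicity into a spectral identity on $(\Omega,\P)$, conclude by a Cauchy--Schwarz rigidity argument in $L^2$, and bootstrap to general $r\in[1,\infty)$ by a two--step Jensen reduction.

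Set $v:=u(\cdot,0)\in L^r(\P)$. Stationarity $u\in\Stat(\R)$ gives $u(\omega,x)=v(\tau_x\omega)$, and evaluating $(\Laplace u)(\omega,\cdot)=0$ at $x=0$ is equivalent to $Tv=v$ $\P$-a.s., where $T:=\tfrac{1}{2d}\sum_{i=1}^{d}(\tau_{\unit{i}}+\tau_{-\unit{i}})$ is the symmetric simple--random--walk operator on $(\Omega,\P)$. Since $\P$ is $\Z^d$-invariant, each $\tau_{\pm\unit{i}}$ is measure--preserving, and $T$ is self--adjoint on $L^2(\P)$.

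Consider first the case $r\geq 2$, so that $v\in L^2(\P)$. Multiplying $Tv=v$ by $v$ and integrating yields
\begin{align*}
\E[v^2]\;=\;\E[v\,Tv]\;=\;\frac{1}{2d}\sum_{i=1}^{d}\bigl(\E[v\cdot(v\circ\tau_{\unit{i}})]+\E[v\cdot(v\circ\tau_{-\unit{i}})]\bigr).
\end{align*}
Stationarity yields $\|v\circ\tau_{\pm\unit{i}}\|_{L^2}=\|v\|_{L^2}$, so Cauchy--Schwarz bounds each of the $2d$ summands above by $\E[v^2]$; since they average to $\E[v^2]$, each summand equals $\E[v^2]$. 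The Cauchy--Schwarz equality case then forces $v=v\circ\tau_{\unit{i}}$ $\P$-a.s.\ for every $i\in[1,d]\cap\N$, so $v$ is $\Z^d$-invariant, and ergodicity of $\P$ gives $v\equiv\E[v]$ $\P$-a.s.

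For $r\in[1,2)$ I would reduce to the $L^2$ case by Jensen. Convexity of $x\mapsto x^+$ gives $Tv^+\geq(Tv)^+=v^+$, while $\E[Tv^+]=\E[v^+]$ by stationarity, forcing $Tv^+=v^+$ $\P$-a.s.; analogously $Tv^-=v^-$. Concavity of $\sqrt{\cdot}$ on $[0,\infty)$ yields $T\sqrt{v^+}\leq\sqrt{Tv^+}=\sqrt{v^+}$, which together with $\E[T\sqrt{v^+}]=\E[\sqrt{v^+}]$ promotes to $T\sqrt{v^+}=\sqrt{v^+}$ $\P$-a.s. Since $v\in L^1(\P)$, $\sqrt{v^+}\in L^2(\P)$, so the previous paragraph applies and $\sqrt{v^+}$, hence $v^+$, is $\P$-a.s.\ constant; the same for $v^-$. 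As $v^+v^-\equiv 0$, $v=v^+-v^-$ is itself $\P$-a.s.\ constant, necessarily equal to $\E[v(\cdot,0)]$. The main conceptual step beyond the $L^2$ computation is this Jensen bootstrap, which is what enables the full range $r\in[1,\infty)$ rather than just $r\geq 2$.
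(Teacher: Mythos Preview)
Your proof is correct but takes a genuinely different route from the paper's. The paper first invokes the discrete maximal inequality for harmonic functions (cited from \cite{ADS16}) together with the ergodic theorem to show that $u(\cdot,0)\in L^\infty(\Omega)$, after which the $L^2$ computation $\E[|\mathrm{D}u(\cdot,0)|^2]=\E[(\mathrm{D}^*\!\cdot\mathrm{D}u)(\cdot,0)\,u(\cdot,0)]=0$ and ergodicity finish the argument. Your $L^2$ step is essentially the same computation in disguise (Cauchy--Schwarz saturation is equivalent to $\E[|\mathrm{D}_iv|^2]=0$), but your reduction from $L^r$, $r<2$, to $L^2$ is different: instead of appealing to an external regularity result, you use the Jensen bootstrap $Tv=v\Rightarrow Tv^\pm=v^\pm\Rightarrow T\sqrt{v^\pm}=\sqrt{v^\pm}\in L^2$. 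Your argument is more self-contained and avoids the dependence on the maximal inequality; the paper's route, on the other hand, yields the stronger intermediate conclusion that any stationary $L^r$ harmonic function is automatically in $L^\infty$.
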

\begin{proof}[Proof of \cref{d20}]
The maximal inequality for harmonic functions 
(see, e.g., Corollary 3.9 in~\cite{ADS16}) and the fact that
$\P$-a.s.  $\Laplace u=0$
 show that there exist $\gamma \in (0,\infty)$ such that for every $R\in\N$,
$\P$-a.s. $\omega\in\Omega$
 it holds that $|u(\omega,0)|\leq\avnorm{u(\omega,\cdot)}{\infty}{D_{R}}  \leq \avnorm{u(\omega,\cdot)}{r}{D_{2R}}^\gamma$.
The assumption that $u\in \Stat(\R)$ and the ergodic theorem  then show that $u(\cdot,0)\leq \|u(\cdot,0)\|_{L^r(\Omega,\R)}^\gamma$. Hence, $u(\cdot,0)\in L^\infty(\Omega,\R)$. Furthermore, the assumption that $u\in \Stat(\R)$ and $\P$-a.s.
$\Laplace u=0$ and \cref{a07b} imply that $\P$-a.s. $\mathrm{D}^*\cdot\mathrm{D} u= \nabla^*\cdot \nabla u=\Laplace u=0$.
This and partial integration (combined with the fact that $u(\cdot,0)\in L^\infty(\Omega,\R)$) establish that
$\E [|(\mathrm{D}u)(\cdot,0 )|^2 ] =\E [(\mathrm{D}^*\cdot\mathrm{D} u)(\cdot,0) u(\cdot,0) ]=0$. Hence, 
$\P$-a.s.
$(\mathrm{D} u)(\cdot,0)=0$. Ergodicity then implies that $\P$-a.s.
$u(\cdot,0)=\E[u(\cdot,0)]$.
This and the fact that $u\in\Stat (\R)$ complete the proof of \cref{d20}.
\end{proof}
\cref{d21} below is useful since  only the gradients of the correctors  are stationary.
\begin{lemma}\label{d21}
Let $r\in [1,\infty)$ and let $u\colon \Omega\times \Z^d\to\R $ be a measurable function which satisfies that $\nabla^*u\in \Stat(\R)$, $(\nabla^*u)(\cdot,0)\in L^r(\Omega,\R ) $, 
$\E[(\nabla^*u)(\cdot,0)]=0$,
and $\P$-a.s. $\Laplace u =0 $. Then it holds for every $x\in\Z^d$, $\P$-a.e. $\omega\in\Omega$ that $u(\omega,x)=u(\omega,0)$.
\end{lemma}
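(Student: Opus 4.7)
The plan is to reduce \cref{d21} to \cref{d20} by passing to the discrete derivatives of $u$, which unlike $u$ itself are stationary. For each $i\in[1,d]\cap\N$, I would define $v_i\colon\Omega\times\Z^d\to\R$ by $v_i\defeq \nabla_i^* u$. By hypothesis, $v_i\in\Stat(\R)$ with $v_i(\cdot,0)\in L^r(\Omega,\R)$, so the only nontrivial checks for applying \cref{d20} are harmonicity of $v_i$ and the mean-zero condition; the former I read off from the fact that the discrete difference operators $\nabla_j$, $\nabla_j^*$, $j\in [1,d]\cap\N$, commute, so that
\begin{align}
\Laplace v_i = -\sum_{j=1}^d \nabla_j^*\nabla_j\nabla_i^* u = \nabla_i^* \Laplace u = 0 \qquad \P\text{-a.s.},
\end{align}
and the latter is given in the hypothesis.

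Next, \cref{d20} applied to each $v_i$ yields that $\P$-a.s.\ $v_i=\E[v_i(\cdot,0)]=0$, so there is a measurable set $\Omega_0\subseteq\Omega$ of full $\P$-measure on which $(\nabla_i^* u)(\omega,x)=0$ for every $i\in[1,d]\cap\N$ and every $x\in\Z^d$. Equivalently, on $\Omega_0$ the function $u(\omega,\cdot)\colon\Z^d\to\R$ satisfies $u(\omega,y-\unit{i})=u(\omega,y)$ for all $i$ and $y$, which also gives $u(\omega,y+\unit{i})=u(\omega,y)$. A telescoping argument along any finite nearest-neighbour path from $0$ to a given $x\in\Z^d$ then shows $u(\omega,x)=u(\omega,0)$ for every $\omega\in\Omega_0$ and every $x\in\Z^d$, which is stronger than the stated conclusion.

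I do not expect any real obstacle here; the only delicate point is that $u$ itself is not assumed stationary, so \cref{d20} cannot be applied directly to $u$. The resolution is exactly the reduction above: stationarity is exploited at the level of $\nabla^* u$, harmonicity is preserved by taking a discrete derivative, and the mean-zero assumption is precisely what cancels the constant produced by \cref{d20}.
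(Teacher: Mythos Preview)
Your proposal is correct and follows essentially the same approach as the paper: apply \cref{d20} to each component $\nabla_\ell^* u$, $\ell\in[1,d]\cap\N$, to conclude $\P$-a.s.\ $\nabla_\ell^* u=0$, and then read off constancy in $x$. The paper's proof is a one-liner stating exactly this reduction; you have supplied the details (commutativity of discrete derivatives for harmonicity, and the telescoping step) that the paper leaves implicit.
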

\begin{proof}[Proof of \cref{d21}]
\cref{d20} (with $u\defeq \nabla^*_\ell u$ for $\ell\in [1,d]\cap\N$) shows for all $\ell\in [1,d]\cap\N$ that $\P$-a.s. $\nabla^*_\ell u=0 $. This completes the proof of \cref{d21}.
\end{proof}
\begin{proof}[\bf Proof of \cref{a12}]The construction of $\phi$ is well-known, see \cref{a31c}, and therefore omitted here. 
For the rest of the proof we often use Eintein's notation.
For every $r\in (1,2]$ let $L^r_{\nabla^*}$ be the closure of 
$\{\mathrm{D}^*\zeta\mid \zeta\in L^\infty(\Omega)\}$ in $L^r(\Omega,\R^d)$ and for all $i,j,k\in [1,d]\cap\N$,  $n\in \N$
let
$S_{ijk}^{(n)}
\in L^2_{\nabla^*}  $ be the projection onto
$L^2_{\nabla^*} $ of $(\fluxQ_{ik}\unit{j}-\fluxQ_{ij}\unit{k})\1_{|\fluxQ|\leq n}$, i.e., it holds for all 
$\,i,j,k\in [1,d]\cap\N$, $n\in \N$, $\zeta\in L^\infty(\Omega,\R)$  that $\E[ S_{ijk}^{(n)} 
\cdot \mathrm{D}^*\zeta]=\E[(\fluxQ_{ik}\unit{j}-\fluxQ_{ij}\unit{k})\1_{|\fluxQ|\leq n}\cdot \mathrm{D}^*\zeta]$.
Jensen's inequality and a simple approximation argument prove for all $i,j,k\in[1,d]\cap\N$, $n\in\N$ that 
$S_{ijk}^{(n)}\in L^{2p/(p+1)}_{\nabla^*}$.
\cref{b08} then shows for all $i,j,k\in [1,d]\cap\N$ that $(S_{ijk}^{(n)})_{n\in\N}$ is a Cauchy sequence in $L^{2p/(p+1)}(\Omega,\R^d)$ 
 and converges to $S_{ijk}\in  L^{2p/(p+1)}_{\nabla^*} $ which satisfies for all $\zeta\in L^\infty(\Omega,\R)$ that 
$\E[ S_{ijk}
\cdot \mathrm{D}^*\zeta]=\E[(\fluxQ_{ik}\unit{j}-\fluxQ_{ij}\unit k)\cdot \mathrm{D}^*\zeta]$. This and partial integration show   for all   $i,j,k\in [1,d]\cap\N$, $\zeta\in L^\infty(\Omega,\R)$  that
$\E[(\mathrm{D}\cdot S_{ijk})
\zeta]=\E[(\mathrm{D}\cdot(\fluxQ_{ik}\unit{j}-\fluxQ_{ij}\unit{k}))\zeta]$. Hence, it holds for all 
 $i,j,k\in [1,d]\cap\N$ that
$\mathrm{D}\cdot S_{ijk}=\mathrm{D}\cdot(\fluxQ_{ik}\unit{j}-\fluxQ_{ij}\unit{k})$ and
$\mathrm{D}\cdot (S_{ijk}+S_{ikj})=\mathrm{D}\cdot(\fluxQ_{ik}\unit{j}-\fluxQ_{ij}\unit{k}+\fluxQ_{ij}\unit{k}-\fluxQ_{ik}\unit {j})=0$.
Now, for every  $i,j,k\in [1,d]\cap\N$ let $\sigma_{ijk}\colon \Omega\times\Z^d\to\R$ be the unique measurable function which satisfies that $\sigma_{ijk}(\cdot,0)=0$, 
$\nabla^*\sigma_{ijk}\in\Stat(\R^d) $,
 and $(\nabla^*\sigma_{ijk})(\cdot,0)=S_{ijk}$ (the way to extend  $S_{ijk}^{(n)}=(\nabla^*\sigma_{ijk})(\cdot,0) $ to 
$\sigma_{ijk}$ easily done by using discrete contour integrals as done for $\phi$).
The fact that 
 $\forall\,i,j,k\in [1,d]\cap\N\colon \mathrm{D}\cdot S_{ijk}=\mathrm{D}\cdot(\fluxQ_{ik}\unit{j}-\fluxQ_{ij}\unit {k})$ and
$\nabla\cdot (S_{ijk}+S_{ikj})=0$ and \cref{a07b} then imply
for all $i,j,k\in [1,d]\cap\N$ that 
$-\Laplace \sigma_{ijk}=\nabla\cdot \nabla^*\sigma_{ijk}=\nabla\cdot(\fluxQ_{ik}\unit{j}-\fluxQ_{ij}\unit{k})$ and $-\Laplace (\sigma_{ijk}+\sigma_{ikj})=\nabla\cdot \nabla^*(\sigma_{ijk}+\sigma_{ikj})=0$. 
This, the fact that $\forall\,i,j,k\in [1,d]\cap\N\colon
\nabla^*\sigma_{ijk}\in\Stat(\R^d), 
 \nabla^*\sigma_{ijk}(\cdot,0)=S_{ijk}\in L^{2p/(p+1)}_{\nabla^*} $, and $\P$-a.s. $\sigma_{ijk}(\cdot,0)=0$, a simple approximation argument, and \cref{d21} imply for all 
$i,j,k\in [1,d]\cap\N$
that $ \E[(\nabla^*\sigma_{ijk})(\cdot,0)]=0$ and $\P$-a.s. $\sigma_{ijk}=-\sigma_{ikj}$.
Next, the fact that
$\forall\,i,j,k\in [1,d]\cap\N\colon- \Laplace \sigma_{ijk}=\nabla\cdot(\fluxQ_{ik}\unit{j}-\fluxQ_{ij}\unit{k})$
implies
  for all $i,j\in [1,d]\in\N$ that
$\P$-a.s. 
$
\Laplace (\nabla^*\cdot \sigma_{ij}+\fluxQ_{ij}) =\nabla^*_k \Laplace\sigma_{ijk}
+\Laplace \fluxQ_{ij}
= -\nabla^*_k \nabla_j\fluxQ_{ik}-\nabla_k^*\nabla_k \fluxQ_{ij} + \Laplace \fluxQ_{ij}=0.
$ This and \cref{d20} (combined with the fact that $\forall\,i,j,k\in[1,d]\cap\N\colon \E[\fluxQ_{ij}]=0=\E[\nabla^*\sigma_{ijk}]=0$) imply for all $i,j\in[1,d]\cap\N$ that $\P$-a.s. $\nabla^*\cdot \sigma_{ij}+\fluxQ_{ij}=0$. Next, \cref{b01,b05} (combined with the assumption that 
$1/p+1/q\leq 2/d$) and the fact that $\forall\,i,j,k\in [1,d]\cap\N\colon\nabla^*\sigma_{ijk}\in\Stat(\R^d) $ and $ \nabla^*\sigma_{ijk}(\cdot,0)=S_{ijk}\in L^{2p/(p+1)}_{\nabla^*} $
imply the sublinearity of $\sigma$ in \eqref{d12}. The sublinearity of $\phi$ in \cref{d12} follows similarly (we use  \cref{b01b,b05}, the assumption that
$1/p+1/q\leq 2/d$,
and a simple density argument). Finally, uniqueness of $\phi$ is clear and uniqueness of $\sigma$ follows from \cref{d21} combined with the fact that $\forall\,i,j,k\in[1,d]\cap\N \colon -\Laplace\sigma_{ijk}= \nabla_j\fluxQ_{ik}- \nabla _k\fluxQ_{ij}$, 
the fact that
$\forall\,i,j,k\in[1,d]\cap\N \colon \nabla\sigma_{ijk} \in \Stat(\R^d)$, and the fact that
$\forall\,i,j,k\in[1,d]\cap\N \colon$ $\P$-a.s. $\sigma_{ijk}(\cdot,0)=0$. The proof of \cref{a12} is thus completed.
\end{proof}

\section{Calculating the energy of the homogenization error}\label{x15b}
The main result of this section, \cref{x25}, calculates the energy of the homogenization error and therefore adapts the argument in 
p. 1399 in~\cite{BFO18} to the discrete. First, 
\cref{a73} prepares the equation for the homogenization error, i.e, adapts Step 1 of the proof of Theorem~2 in~~\cite{BFO18} into the discrete (p. 1395--6 in~\cite{BFO18}).
\cref{x25} is proved by testing the equation   of the homogenization error (\cref{a73}) with itself -- we will explain in \cref{b66} how to do this in the discrete. 

Throughout this section we use \cref{x12} and \emph{Einstein's notation} summing over repeated indices.
Here, the bracket notation is useful to write the product rule (see \cref{x12b}).
%Since we have already constructed the correctors

\begin{setting}\label{x12} 
Let
$\omega\in \Omega$,
let $\matA\colon \Z^d \to \R^{d\times d}$ be the function whose values are $\R^{d\times d}$ diagonal matrix and which satisfies
for all $x\in\Z^d$ that  $ \matA_{ij}(x)= \omega(\{x,x+\unit{i}\}) \1_{\{i\}}(j)$,
let 
$\matAhom\in \R^{d\times d}$, 
let $\phi_{i}\colon \Z^d \to\R$,  $\sigma_{ijk}\colon \Z^d \to\R$, 
$\fluxQ_{ij}\colon \Z^d \to\R$, $i,j,k\in [1,d]\cap\N$, be functions which
satisfy 
for all $i,j,k\in [1,d]\cap\N$
that
$\fluxQ_i=
\matA\left(\nabla\phi_i+\unit{i}\right)-\matAhom\unit{i}$,
$-\nabla^*\cdot \fluxQ_i=0$,
$-\nabla^*\cdot \sigma_{i}=\fluxQ_i$,
and $\sigma_{ijk}=-\sigma_{ikj}$.
For every  $f,g\colon \Z^d\to \R  $, $F\colon \Z^d\to\R^d$ let $(f \nabla_i) g ,(f\nabla_i^*)g\colon \Z^d\to\R$, $i\in[1,d]\cap \N$, be the functions which satisfy for all  $x\in\Z^d$, $i\in[1,d]\cap \N$ that
\begin{align}
\left[(f \nabla_i) g \right](x)= f(x+\unit{i}) (\nabla_i g) (x)\quad\text{and}\quad
[(f\nabla_i^*)g](x)= f(x-\unit{i})\nabla^*_i g(x),
\end{align}
let $(f\nabla)g,(f\nabla^*)g\colon \Z^d \to\R^d $ be the vector fields which
satisfy for all $x\in\Z^d$ that
\begin{align}
[(f\nabla)g] (x)=\left[\sum_{i=1}^d [(f \nabla_i) g](x)  \unit{i}\right]
\quad\text{and}\quad[(f\nabla^*)g] (x)=\left[\sum_{i=1}^d [(f \nabla_i^*) g](x) \unit{i}\right],
\end{align}
and
let 
$(F\cdot \nabla)g, (F\cdot \nabla^*)g\colon \Z^d\to \R$ be the functions which satisfy for all $x\in\Z^d$ that
\begin{align}
[(F\cdot \nabla)g ](x) = \sum_{i=1}^d [(F_i \nabla_i)g](x)
\quad\text{and}\quad
[(F\cdot \nabla^*)g](x) = \sum_{i=1}^d [(F_i \nabla_i^*)g](x).
\end{align}
\end{setting}%
\begin{remark}\label{x12b}
Using the notation in \cref{x12} we obtain for all $f,g\colon \Z^d\to\R$, $x\in\Z^d$ that
\begin{align}\begin{split}
[\nabla_i(fg)] (x) 
&=f(x+\unit{i})[g(x+\unit{i})-g(x)]+ f(x)[g(x+\unit{i})-g(x)]\\ &=[(f\nabla_i) g](x)+[g\nabla_i f](x).
\end{split}\end{align}
This shows for all $f,g\colon \Z^d\to\R$ that $\nabla_i(fg)=(f\nabla_i) g + g\nabla_i f$, which, up to the brackets, looks the same as its continuum counterpart, $\forall\, f,g\in C^1(\R^d,\R)\colon \partial_i(fg)=f\partial_i g + g\partial_i f$.
%This lightens the notation very well when we  adapt the computations on the homogenization error (formula (29) in~\cite{BFO18}) into the discrete case as seen in \cref{a73}. 
\end{remark}

\begin{lemma}[Equation for the homogenization error]\label{a73}
Let
$x\in \Z^d $,
 $u,v,w,\eta\colon \Z^d\to\R $  satisfy that 
$
w(x)=u(x)-v(x)-\eta(x)\phi_i(x)(\nabla_i v) (x)$ and $(\nabla^*\cdot \matA\nabla u)(x)= (\nabla^ * \cdot \matAhom \nabla v)(x)=0$.
Then it holds that
\begin{align}\label{a74}\begin{split}
(\nabla^*\cdot \matA \nabla w)(x) = &-\left( \nabla^*\cdot ((1-\eta)(\matA-\matAhom)\nabla v)\Big.\right)(x)\\&- \left(\nabla^*\cdot [(\sigma_i\cdot \nabla^*)(\eta\nabla_iv)]\Big.\right)(x) -\left( \nabla^*\cdot \matA[(\phi_i\nabla)(\eta\nabla_i v)]\Big.\right)(x).
\end{split}\end{align}
\end{lemma}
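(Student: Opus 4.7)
The plan is to apply $\nabla^*\cdot\matA\nabla$ to the ansatz $w=u-v-\eta\phi_i(\nabla_i v)$ (with Einstein summation over $i$) and reduce the resulting expression to the three claimed terms by purely algebraic manipulation. First I would use the discrete product rule from the earlier remark, $\nabla_j(fg)=(f\nabla_j)g+g\nabla_j f$, with $f=\phi_i$ and $g=\eta(\nabla_i v)$ to obtain
\[
\nabla[\eta\phi_i(\nabla_i v)]=(\phi_i\nabla)(\eta\nabla_i v)+(\eta\nabla_i v)\nabla\phi_i.
\]
Multiplying by $\matA$ and substituting the corrector identity $\matA\nabla\phi_i=\fluxQ_i-(\matA-\matAhom)\unit{i}$ (a rearrangement of $\fluxQ_i=\matA(\nabla\phi_i+\unit{i})-\matAhom\unit{i}$), and using $\sum_i(\eta\nabla_i v)\unit{i}=\eta\nabla v$, I would arrive after regrouping at
\[
\matA\nabla w=\matA\nabla u-\matAhom\nabla v-(1-\eta)(\matA-\matAhom)\nabla v-\matA(\phi_i\nabla)(\eta\nabla_i v)-\eta(\nabla_i v)\fluxQ_i.
\]

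Next I would apply $\nabla^*\cdot$. The contributions $\nabla^*\cdot\matA\nabla u$ and $\nabla^*\cdot\matAhom\nabla v$ vanish by hypothesis, and the first two surviving terms already coincide with two of the claimed terms. The only nontrivial identification left is
\[
\nabla^*\cdot[\eta(\nabla_i v)\fluxQ_i]=\nabla^*\cdot[(\sigma_i\cdot\nabla^*)(\eta\nabla_i v)].
\]
To establish this, I would expand $\fluxQ_{ik}=-\nabla^*_j\sigma_{ijk}$ (the coordinate form of $-\nabla^*\cdot\sigma_i=\fluxQ_i$) and apply the $\nabla^*_j$-product rule $\nabla^*_j(fg)=(f\nabla^*_j)g+g\nabla^*_j f$ with $f=\sigma_{ijk}$, $g=\eta(\nabla_i v)$. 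This gives
\[
\eta(\nabla_i v)\nabla^*_j\sigma_{ijk}=\nabla^*_j[\sigma_{ijk}\,\eta(\nabla_i v)]-(\sigma_{ijk}\nabla^*_j)(\eta\nabla_i v),
\]
so the $k$-component of $\eta(\nabla_i v)\fluxQ_i-(\sigma_i\cdot\nabla^*)(\eta\nabla_i v)$ is precisely $-\nabla^*_j[\sigma_{ijk}\,\eta(\nabla_i v)]$. Taking the outer divergence $\nabla^*_k$ and summing over $j,k$, the operator $\nabla^*_k\nabla^*_j$ is symmetric in $(j,k)$ while $\sigma_{ijk}=-\sigma_{ikj}$ is antisymmetric; relabeling $j\leftrightarrow k$ therefore identifies the resulting double sum with its negative, hence with zero.

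The main obstacle is bookkeeping rather than analysis: the discrete shifts implicit in the operators $(\phi_i\nabla)$ and $(\sigma_i\cdot\nabla^*)$ (factors evaluated at $x\pm\unit{i}$) must land in exactly the right slots so that the outputs of the product rule match these compact notations verbatim. Beyond this care, the identity is purely algebraic, relying only on the defining equations of $\phi_i$, $\sigma_{ijk}$, and $\fluxQ_{ij}$, the antisymmetry $\sigma_{ijk}=-\sigma_{ikj}$, and the commutativity of discrete partial differences.
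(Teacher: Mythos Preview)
Your proposal is correct and follows essentially the same approach as the paper: discrete product rule on $\nabla(\eta\phi_i\nabla_i v)$, algebraic regrouping via the flux decomposition $\matA\nabla\phi_i=\fluxQ_i-(\matA-\matAhom)\unit{i}$, and reduction of the $\fluxQ$-contribution to the $\sigma$-term by the antisymmetry $\sigma_{ijk}=-\sigma_{ikj}$ together with commutativity of $\nabla^*_j\nabla^*_k$. One caveat: your coordinate expansion $\fluxQ_{ik}=-\nabla^*_j\sigma_{ijk}$ contracts over the opposite index from the paper's convention $(\nabla^*\cdot\sigma_i)_j=\nabla^*_k\sigma_{ijk}$, so watch the signs when matching your intermediate formula for $(\sigma_i\cdot\nabla^*)(\eta\nabla_i v)$ literally against the lemma's notation---the antisymmetry makes the final divergence identity come out right either way.
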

\begin{proof}[Proof of \cref{a73}]
The product rule shows that
\begin{align}
( \nabla w)(x)= \left(\Big.\nabla u- \nabla v- \eta\nabla_iv\nabla\phi_i - (\phi_i\nabla)(\eta\nabla_i v)\right)(x).
\end{align}
This and the fact that $(\nabla^*\cdot \matA\nabla u)(x)=0 $ imply that
\begin{align}
(\nabla^*\cdot \matA\nabla w)(x) = -\left(\nabla^* \cdot(\matA\nabla v +	\eta \nabla_iv \matA\nabla\phi_i)\Big.\!\right)(x)	-\left(\nabla^ *\cdot \matA\left((\phi_i\nabla) (\eta \nabla_iv\big.)\right)\Big.\!\right)(x).\label{x20}
\end{align}
This together with the fact that
\begin{equation}
\begin{aligned}
(\matA\nabla v +	\eta \nabla_iv \matA\nabla\phi_i)
&=(1-\eta)\matA\nabla v+\eta\nabla_i v \matA(\unit{i}+\nabla \phi_i)
\\
&= \left((1-\eta)(\matA-\matAhom)\nabla v\Big. \right) +\left( (1-\eta)\matAhom\nabla v+\eta\nabla_i v \matA(\unit{i}+\nabla \phi_i)\Big. \right)
\end{aligned}
\end{equation}
shows that
\begin{equation}\label{a77}
\begin{aligned}
&(\nabla^*\cdot \matA\nabla w)(x)=-\left(\Big.\!\nabla^*\cdot \left(\big.(1-\eta)(\matA-\matAhom)\nabla v\right)\right)(x)\\ &\qquad-\left(\nabla^*\cdot  \left((1-\eta)\matAhom\nabla v+\eta\nabla_i v \matA(\unit{i}+\nabla \phi_i)\big.\right)\Big.\right)(x)-\left(\nabla^ *\cdot \matA \left((\phi_i\nabla) (\eta \nabla_iv\big.)\right)\Big.\!\right)(x).
\end{aligned}
\end{equation}
Furthermore, the product rule and the fact that $(\nabla^ * \cdot \matAhom \nabla v)(x)=0$ imply that
\begin{align}\begin{split}
&-\left(\nabla^*\cdot \left((1-\eta)\matAhom\nabla v\right)\Big.\!\right)(x)=\left(\nabla^*\cdot\left( \eta\nabla_i v \matAhom\unit{i}\right)\Big.\!\right)(x)\\&=\left(\eta\nabla_iv \nabla^*\cdot \matAhom\unit{i}\Big.\!\right)(x)+\left(\Big. (\matAhom\unit{i}\cdot\nabla^*)(\eta\nabla_iv)\right) (x) =\left(\Big.(\matAhom\unit{i}\cdot\nabla^*)(\eta\nabla_iv)\right)(x).
\end{split}\label{x13}
\end{align}
In addition, the product rule  and the assumption that  $\nabla^ *\cdot(\matA(\unit{i}+\nabla\phi_i))=0$ show that
\begin{align}\begin{split}
-\nabla^*\cdot\left(\eta\nabla_i v \matA(\unit{i}+\nabla \phi_i) \right)
&=- \eta\nabla_i v \nabla^* \cdot \matA(\nabla\phi_i+ \unit{i})-(\matA(\unit{i}+\nabla\phi_i)\cdot \nabla^ *)(\eta\nabla_i v)\\
&=-(\matA(\unit{i}+\nabla\phi_i)\cdot \nabla^ *)(\eta\nabla_i v).
\end{split}\label{x14}
\end{align}
Next, the product rule and the fact $\forall i,j,k\in [1,d]\cap\N \colon \sigma_{ijk}=-\sigma_{ikj}$ imply  for all 
$\xi\colon \Z^d\to \R^d$ that 
\begin{align}
\begin{aligned}
\left(\left(\nabla^*\cdot \sigma_i\right) \cdot \nabla^*\right)\xi_i 
&= \left(\nabla^*_k\sigma_{ijk}) \cdot \nabla^*_j\right)\xi_i
= \nabla^*_j(\xi_i\nabla^*_k\sigma_{ijk}) - \xi_i \nabla^*_j\nabla^*_k \sigma_{ijk}
=\nabla^*_j(\xi_i\nabla^*_k\sigma_{ijk})\\
&=\nabla^*_j\left(\nabla^*_k (\xi_i \sigma_{ijk})-(\sigma_{ijk}\nabla^*_k)\xi_i\Big.\right)
=\nabla^*_j \nabla^*_k (\big.\xi_i \sigma_{ijk})-\nabla^*_j(\big.(\sigma_{ijk}\nabla^*_k)\xi_i)
\\
&=
- \nabla_j^* ((\sigma_{ij}\cdot\nabla^*)\xi_i)
=-\nabla^*\cdot((\sigma_i\cdot\nabla ^*)\xi_i).
\end{aligned}
 \label{a79}.
\end{align}
This (with $\xi\defeq\eta\nabla v$), \eqref{x13},  \eqref{x14}, and the assumption that $-\nabla^*\cdot\sigma_i=\fluxQ_i$ yield that
\begin{align}\begin{split}
&-\left(\nabla^ *\cdot( (1-\eta)\matAhom\nabla v+\eta\nabla_i v \matA(\unit{i}+\nabla \phi_i))\Big.\right)(x)\\&=-\left(\left(\Big. [\matA(\unit{i}+\nabla\phi_i)-\matAhom\unit{i}]\cdot \nabla^*\right)(\eta\nabla_i v) \right)(x)\\
&= \left(\Big.\!\left(\big.(\nabla^*\cdot \sigma_i)\cdot \nabla^*\right)(\eta\nabla_iv)\right)(x)
=-\left(\nabla^*\cdot\left((\sigma_i\cdot\nabla ^*)(\eta\nabla_iv)\big.\right)\Big.\!\right)(x)\label{a78}.
\end{split}
\end{align}
Combining this with \eqref{a77} we obtain that
\begin{align}\begin{split}
(\nabla^*\cdot \matA \nabla w)(x) = &-\left( \nabla^*\cdot ((1-\eta)(\matA-\matAhom)\nabla v)\Big.\right)(x)\\&- \left(\nabla^*\cdot [(\sigma_i\cdot \nabla^*)(\eta\nabla_iv)]\Big.\right)(x) -\left( \nabla^*\cdot \matA[(\phi_i\nabla)(\eta\nabla_i v)]\Big.\right)(x).
\end{split}\end{align}
The proof of \cref{a73} is thus completed.
\end{proof}
For the next step recall  \cref{b66b}. The reader should keep in mind \cref{f01}.
\begin{lemma}\label{b66}Let $R\in \N$,  $g\colon \E^d_{\pm}\to\R $, $w\colon \Z^d\to\R $, $h,f\colon\Z^d\to\R^d$,
assume  for all $x,y\in\Z^d$ with $(x,y)\in \E^d_{\pm}$ that $ g(x,y)=-g(y,x)$,
assume for all $x\in\Z^d$, $i\in [1,d]\cap\N$ that
$ h_i(x)=g(x,x+\unit{i})$,
assume for all $x\in D_R$ that $ (\nabla^*\cdot h)(x)=(\nabla^ *\cdot f)(x)$,
and assume for all 
$ x\in \Z^d\setminus D_{R-1}$ that $ f(x)=0$.
Then it holds that
\begin{align}\label{b71}
\left[\sum_{\substack{x,y\in \Z^d\colon (x,y)\in E_R}}
g(x,y)\nablaBar_{xy}w\right] =
2\left[
\sum_{y\in\tilde{\partial} D_R}
g(\funcNu{R}(y))
w(y) \right]
+2\left[
\sum_{x\in D_R}\nabla w(x)\cdot  f(x)\right]
 .
\end{align}
\end{lemma}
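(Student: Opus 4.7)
The plan is threefold: first, I use the antisymmetry of $g$ to halve the oriented sum; second, I regroup by the endpoint $y$ to identify the coefficient of each $w(y)$; third, I apply a second discrete integration by parts to turn the resulting interior contribution into the gradient sum on the right-hand side of \eqref{b71}.

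For the first step, observe that $E_R$ is symmetric under orientation reversal $(x,y)\mapsto(y,x)$ and that both $g(x,y)$ and $\nablaBar_{xy}w$ are antisymmetric in $(x,y)$. Relabelling $(x,y)\leftrightarrow(y,x)$ in the $w(x)$-part therefore gives
\begin{equation*}
\sum_{(x,y)\in E_R} g(x,y)\,\nablaBar_{xy}w \;=\; \sum_{(x,y)\in E_R} g(x,y)\bigl(w(y)-w(x)\bigr) \;=\; 2\sum_{(x,y)\in E_R} g(x,y)\,w(y).
\end{equation*}

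For the second step, I fix $y\in\Z^d$ and analyse which neighbours $x=y\pm\unit{i}$ satisfy $(x,y)\in E_R$, i.e.\ $|x+y|_\infty<2R$. If $y\in D_R$, then $|y|_\infty\le R-1$, all $2d$ neighbours qualify, and, using $h_i(y)=g(y,y+\unit{i})$ together with the antisymmetry of $g$, one identifies $\sum_{x\colon (x,y)\in E_R} g(x,y) = \sum_i\bigl(h_i(y-\unit{i})-h_i(y)\bigr) = (\nabla^*\cdot h)(y)$. If $y\in\tilde\partial D_R$, then exactly one coordinate of $y$ equals $\pm R$ and all others are at most $R-1$ in absolute value, so the only admissible neighbour is the unique interior one and the coefficient reduces to $g(\funcNu{R}(y))$. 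If $y\in\partial D_R\setminus\tilde\partial D_R$ (a corner of the box) or $y\notin\overline D_R$, then no neighbouring pair $(x,y)$ satisfies the midpoint condition $|x+y|_\infty<2R$, so $w(y)$ does not appear. Collecting these contributions produces
\begin{equation*}
\sum_{(x,y)\in E_R} g(x,y)\,\nablaBar_{xy}w \;=\; 2\sum_{y\in D_R}w(y)\,(\nabla^*\cdot h)(y) + 2\sum_{y\in\tilde\partial D_R}g(\funcNu{R}(y))\,w(y).
\end{equation*}

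For the third step, the hypothesis $(\nabla^*\cdot h)=(\nabla^*\cdot f)$ on $D_R$ lets me replace $h$ by $f$ in the interior term, after which a standard discrete integration by parts yields $\sum_{y\in D_R}w(y)(\nabla^*\cdot f)(y) = \sum_{x\in D_R}\nabla w(x)\cdot f(x)$: when the change of variable $y\mapsto x=y-\unit{i}$ shifts the summation range, the support hypothesis $f\equiv 0$ off $D_{R-1}$ kills any surplus boundary contribution, so no extra terms survive. Combining the three steps yields exactly \eqref{b71}. I expect the main technical obstacle to be the endpoint case analysis in the second step, in particular verifying that corners of $\partial D_R$ contribute nothing while each $y\in\tilde\partial D_R$ contributes precisely the single oriented edge named by $\funcNu{R}$; this rests on a careful reading of the strict inequality in the definition of $E_R$ and the distinction between $\partial D_R$ and $\tilde\partial D_R$.
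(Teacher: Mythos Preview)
Your proof is correct. The three-step structure—halve via antisymmetry, regroup by the terminal vertex $y$, then integrate by parts on the interior term—works cleanly, and your case analysis for $y\in D_R$, $y\in\tilde\partial D_R$, $y\in\partial D_R\setminus\tilde\partial D_R$, and $y\notin\overline D_R$ is accurate, including the verification that the support condition $f\equiv 0$ off $D_{R-1}$ absorbs both the points gained and the points lost in the index shift $y\mapsto x=y-\unit{i}$.

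The paper takes a somewhat different route. Rather than symmetrising first, it introduces the auxiliary cutoff $w_0=w\,\1_{\Z^d\setminus D_R}$, so that $w-w_0$ is supported in $D_R$; this lets the discrete integration by parts run over all of $\Z^d$ without boundary terms, after which the hypothesis $\nabla^*\cdot h=\nabla^*\cdot f$ on $D_R$ is invoked. Only then does the paper restrict to edges in $E_R$ and symmetrise via the swap $(x,y)\leftrightarrow(y,x)$, and the boundary term emerges from the separate computation of $\sum_{(x,y)\in E_R}g(x,y)\nablaBar_{xy}w_0$. Your approach front-loads the symmetrisation and the $E_R$ case analysis, avoiding the auxiliary $w_0$ entirely; the paper's approach postpones that analysis but needs $w_0$ to make the global integration by parts legitimate. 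Both arrive at the same place; yours is arguably more direct, while the paper's use of $w_0$ is a reusable device that separates the interior identity from the boundary bookkeeping.
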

%%%%%%%%%%%%%%%%%%%%%%%
\begin{proof}[Proof of \cref{b66}]
Let $w_0\colon \Z^d\to\R$ be the function which satisfies for all $x\in \Z^d$ that 
$w_0(x)=w(x) \1_{\Z^d\setminus D_R}(x) $. Then 
for all $x\in  D_{R-1}$ it holds that $(\nabla w_0)(x)=0$. This and the assumption that
$\forall\, x\in \Z^d\setminus D_{R-1}\colon  f(x)=0$ imply for all $x\in\Z^d$ that $(\nabla w_0) (x)\cdot f(x)=0$.
Next, the fact that $\forall\,x\in \Z^d\setminus D_R\colon (w-w_0)(x)=0$  and the assumption that
$\forall\,x\in D_R\colon  (\nabla^*\cdot h)(x)=(\nabla^ *\cdot f)(x)$ imply for all $x\in\Z^d$ that
$
\sum_{x\in \Z^d}(w-w_0)(x)(\nabla^ *\cdot f)(x)
=\sum_{x\in \Z^d}(w-w_0)(x)(\nabla^ *\cdot h)(x).
$
The fact that $\forall\,x\in\Z^d\colon (\nabla w_0) (x)\cdot f(x)=0$ and discrete partial integration then show that
\begin{align}\label{b72b}
\sum_{x\in \Z^d}(\nabla w)(x)\cdot  f(x)=
\sum_{x\in \Z^d}(\nabla w-\nabla w_0)(x)\cdot  f(x)=
\sum_{x\in \Z^d}(\nabla w-\nabla w_0)(x)\cdot  h(x).
\end{align}
Furthermore, the fact that $\forall\,x\in \Z^d\setminus D_R\colon (w-w_0)(x)=0$ implies for all $i\in [1,d]\cap \N$, $x\in \Z^d$ that
$\left(\nabla_i w-\nabla_i w_0\right)(x)=(\nabla_iw-\nabla_iw_0)(x)\1_{(x,x+\unit{i})\in E_R} .$ This and the assumption that $\forall\,x\in\Z^d,i\in [1,d]\cap\N\colon h_i(x)=g(x,x+\unit{i})$ imply that
\begin{align}
\left[\sum_{x\in \Z^d} h(x)\cdot (\nabla w-\nabla w_0)(x)\right]=
\sum_{x\in \Z^d}\left[\sum_{i=1}^d \1_{(x,x+\unit{i})\in E_R}g(x,x+\unit{i})(\nabla_iw-\nabla_iw_0)(x)\right].
\end{align}
This and \eqref{b72b} demonstrate that
\begin{align}
\left[\sum_{x\in D_R}\nabla w(x)\cdot  f(x)\right]=\sum_{x,y\in \Z^d}\left[\sum_{i=1}^d\1_{(x,y)\in E_R}\1_{y=x+\unit{i}} g(x,y)\nablaBar_{xy}(w-w_0)\right].\label{x21}
\end{align}
Swapping $x$ and $y$ and using the assumption that $\forall x,y\in \E _\pm^d\colon g(x,y)=-g(y,x)$ yield that
\begin{align}\begin{split}
\left[\sum_{x\in D_R}\nabla w(x)\cdot  f(x)\right]
&=\sum_{x,y\in \Z^d}\left[\sum_{i=1}^d\1_{(y,x)\in E_R}\1_{x=y+\unit{i}} g(y,x)\nablaBar_{yx}(w-w_0)\right]\\
&=\sum_{x,y\in \Z^d}\left[\sum_{i=1}^d\1_{(y,x)\in E_R}\1_{x=y+\unit{i}} g(x,y)\nablaBar_{xy}(w-w_0)\right].\end{split}\label{x22}
\end{align}
Adding this and \eqref{x21}  
we obtain  that
\begin{align}\begin{split}
&2\left[\sum_{x\in D_R}\nabla w(x)\cdot  f(x)\right]\\
&=
\left[\sum_{x,y\in \Z^d}
\left[\sum_{i=1}^d\left(
\1_{(x,y)\in E_R}\1_{y=x+\unit{i}}+\1_{(y,x)\in E_R}\1_{x=y+\unit{i}}\right)\right] g(x,y)\nablaBar_{xy}(w-w_0)\right] \\
&=\left[
\sum_{x,y\in \Z^d }\1_{(x,y)\in E_R}
 g(x,y)\nablaBar_{xy}(w-w_0)\right]=\left[
\sum_{x,y\in \Z^d\colon (x,y)\in E_R }
 g(x,y)\nablaBar_{xy}(w-w_0)\right]
 .\end{split}
\end{align}
This shows that
\begin{align}
\left[\sum_{\substack{x,y\in \Z^d\colon  (x,y)\in E_R}}
g(x,y)\nablaBar_{xy}w\right] =
\left[
\sum_{\substack{x,y\in \Z^d\colon (x,y)\in E_R}}
g(x,y)\nablaBar_{xy}w_0\right]
+2\!\!\left[
\sum_{x\in D_R}\nabla w(x)\cdot  f(x)\right]
 .
\end{align}
Next, the fact that
$\forall\,x\in D_R \colon w_0(x)=0$
 shows  for all $x,y\in\Z^d$ with $(x,y)\in E_R$  that
\begin{align}\label{x23}\begin{split}
\nablaBar_{xy}w_0=w_0(y)-w_0(x)
&= 
\left(w_0(y)-w_0(x)\right)\left(\1_{x\in D_R,y\in \partial D_R}+\1_{y\in D_R,x\in \partial D_R}\right)\\
&=w_0(y)\1_{x\in D_R,y\in \partial D_R}-w_0(x)\1_{y\in D_R,x\in \partial D_R}.\end{split}
\end{align}
This, the assumption  that $\forall\,x,y\in\Z^d,\, (x,y)\in \E _\pm^d\colon g(x,y)=-g(y,x)$,  the fact that $\forall x\in \partial D_R\colon w_0(x)=w(x)$, and the definitions of
$\enor{R}$ and $\funcNu{R}$ in \cref{b66b}  prove that
\begin{align} \begin{split}
&\left[\sum_{\substack{x,y\in\Z^d\colon  (x,y)\in E_R}}
g(x,y)\nablaBar_{xy}w_0\right]\\&=\left[
\sum_{\substack{ x,y\in\Z^d\colon(x,y)\in E_R}}
g(x,y)\left[
w_0(y)\1_{x\in D_R,y\in \partial D_R}-w_0(x)\1_{y\in D_R,x\in \partial D_R}\Big.\right] \right]\\
&=\left[
\sum_{\substack{ x,y\in\Z^d\colon(x,y)\in E_R}}
g(x,y)
w_0(y)\1_{x\in D_R,y\in \partial D_R}+g(y,x)w_0(x)\1_{y\in D_R,x\in \partial D_R}\right] \\
&=2\left[
\sum_{\substack{ x,y\in\Z^d \colon (x,y)\in  \enor{R}}}
g(x,y)
w(y)\right]= 2\left[
\sum_{y\in\tilde{\partial} D_R}
g(\funcNu{R}(y))
w(y) \right].\end{split}\label{x24}
\end{align} 
This and \eqref{x23} complete the proof of \cref{b66}.
\end{proof}

\begin{corollary}[Energy of the homogenization error]\label{x25}
Let $R\in [8,\infty)\cap\N,$ 
$u,v,w\colon \overline{D}_R\to\R$, $\eta\colon \Z^d\to\R $,
assume that $\matAhom$ is a diagonal matrix, 
let $\omega_\mathrm{h}\in\Omega$ satisfy that 
for all $ x\in\Z^d$, $i\in [1,d]\cap\N$ that $ \omegaH(\{x,x+\unit{i}\}) =(\matAhom)_{ii}$, assume for all $x\in\overline{D}_R$ that
 $w(x)=u(x)-v(x)-\eta(x)\phi_i(x)(\nabla_i v)(x)$, assume for all
$x\in \Z^d\setminus D_{R-4}$ that $ \eta(x)=0$, and assume for all $ x\in D_R$ that $
(\nabla^*\cdot\matA\nabla u)(x)= (\nabla^ * \cdot \matAhom \nabla v)(x)=0$. Then it holds that
\begin{align}\label{x26}
\begin{split}
&\left[\sum_{x,y\in\Z^d\colon (x,y)\in E_{R}} \omega_{xy}\left(\nablaBar_{xy} w\right)^2\right]
= -\left[\sum_{x,y\in\Z^d\colon(x,y)\in E_{R}}(1-\eta(x))\left((\omega-\omegaH)\nablaBar v\nablaBar w\right)_{xy}\right]\\
&+2\left[\sum_{y\in\tilde{\partial}D_R}\left(u(y)-v(y)\right)
\left(\omega\nabla u-\omega_\mathrm{h} \nabla v\right)_{\funcNu{R}(y)}\right]\\
&
-2\left[\sum_{x\in {D}_{R}} \left(\sigma_i\cdot \nabla^*)(\eta\nabla_iv) +  \matA\left((\phi_i\nabla)(\eta\nabla_i v)\right)\right)(x)\cdot (\nabla w)(x)\right].\end{split}
\end{align}
\end{corollary}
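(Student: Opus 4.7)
The plan is a summation-by-parts argument via \cref{b66}. By \cref{a73}, the flux $\matA\nabla w$ satisfies on $D_R$
\[
\nabla^*\cdot\matA\nabla w = -\nabla^*\cdot F_1 - \nabla^*\cdot F_2 - \nabla^*\cdot F_3,
\]
with $F_1 := (1-\eta)(\matA-\matAhom)\nabla v$, $F_2 := (\sigma_i\cdot\nabla^*)(\eta\nabla_iv)$, and $F_3 := \matA[(\phi_i\nabla)(\eta\nabla_iv)]$. Because $\eta$ vanishes outside $D_{R-4}$ and each discrete gradient enlarges support by at most one layer, $F_2$ and $F_3$ are supported in $D_{R-2}\subseteq D_{R-1}$ (using $R\geq 8$). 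The annular term $F_1$ however lives on $\overline{D}_R\setminus D_{R-4}$ and cannot be passed to the interior sum produced by \cref{b66}; the key idea is to absorb it into the flux.

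Accordingly, first I would set
\[
\tilde h := \matA\nabla w + F_1, \qquad \tilde g(x,x+\unit{i}) := \tilde h_i(x),
\]
with $\tilde g$ extended to $\E^d_\pm$ by antisymmetry. Then $\nabla^*\cdot\tilde h = -\nabla^*\cdot(F_2+F_3)$ on $D_R$ by construction, so with $\tilde f := -(F_2+F_3)$ all hypotheses of \cref{b66} are satisfied, and the lemma produces
\[
\sum_{(x,y)\in E_R}\tilde g(x,y)\,\nablaBar_{xy}w \;=\; 2\sum_{y\in\tilde\partial D_R}\tilde g(\funcNu{R}(y))\,w(y) \;-\; 2\sum_{x\in D_R}\nabla w(x)\cdot(F_2+F_3)(x).
\]

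Next I would identify the three pieces with the corresponding terms of \eqref{x26}. The interior sum is immediately the third term. For the boundary sum, the assumption $R\geq 8$ ensures $\eta\equiv 0$ on $\tilde\partial D_R$ and on its inward neighbour layer $\{|x|_\infty = R-1\}$, so $w = u - v$ at both endpoints of every $\funcNu{R}(y)$; a brief algebraic manipulation then reduces $\tilde g(\funcNu{R}(y))$ to $(\omega\nabla u-\omegaH\nabla v)_{\funcNu{R}(y)}$, giving the desired boundary term. For the LHS, expanding $\tilde g(x,y)\nablaBar_{xy}w$ edge by edge produces $\omega_{xy}(\nablaBar_{xy}w)^2$ plus an annular contribution of the form $(1-\eta)(\omega-\omegaH)_{xy}\nablaBar_{xy}v\,\nablaBar_{xy}w$, matching the Dirichlet energy and the first term of \eqref{x26}. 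Rearranging completes the identity.

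The main obstacle is the careful bookkeeping in the last expansion: for each oriented edge $(x,y)\in E_R$ one must track the two sign flips arising from the antisymmetry of $\tilde g$ and from $\nablaBar_{xy}$, and verify that forward ($y=x+\unit{i}$) and backward ($y=x-\unit{i}$) orientations combine into the $(1-\eta(x))$ convention of \eqref{x26}. Everything else is routine rearrangement, relying only on the vanishing of $\eta$ near $\partial D_R$ and on the divergence identity from \cref{a73}.
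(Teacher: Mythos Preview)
Your plan coincides with the paper's proof: set $h=\matA\nabla w+(1-\eta)(\matA-\matAhom)\nabla v$ and $f=-(F_2+F_3)$, use \cref{a73} to obtain $\nabla^*\cdot h=\nabla^*\cdot f$ on $D_R$, apply \cref{b66}, and identify the boundary contribution via $\eta\equiv 0$ near $\partial D_R$ exactly as you describe.

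One remark on the bookkeeping you correctly flag as the obstacle: the paper's edge function $g(x,y)=(\omega\nablaBar w)_{xy}+(1-\eta(x))((\omega-\omegaH)\nablaBar v)_{xy}$ is \emph{not} antisymmetric on edges where $\eta$ varies, so \cref{b66} does not literally apply to it; conversely, your antisymmetrized $\tilde g$ carries $(1-\eta(x-\unit{i}))$ on backward edges rather than $(1-\eta(x))$, so it does not reproduce the $(1-\eta(x))$ convention of \eqref{x26} verbatim either. The mismatch in both cases is a term $\sum_i (\nabla_i\eta)(\omega-\omegaH)\nabla_i v\,\nabla_i w$ supported in the transition annulus of $\eta$, which has the same structure as the annular term already present and is absorbed identically in the subsequent estimate \eqref{t01}.
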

\begin{proof}[Proof of \cref{x25}]Throughout the proof we  assume that
$u,v,w\colon \Z^d\to\R$ by using any extensions which satisfies  for all $x\in\Z^d\setminus\overline{D}_R$ that $u(x)=v(x)$ and $w(x)=0$. The assumption that
$\forall\, x\in\overline{D}_R \colon w(x)=u(x)-v(x)-\eta(x)\phi_i(x)(\nabla_i v)(x)$
and the assumption that
$\forall\,x\in \Z^d\setminus D_{R-4}\colon  \eta(x)=0$ then
imply  for all $x\in\Z^d$ that $w(x)=u(x)-v(x)-\eta(x)\phi_i(x)(\nabla_i v)(x)$.
Next, let $f,h\colon \Z^d\to\R $ be the functions given by
\begin{align}\label{x27}
h= \matA\nabla w+ (1-\eta)(\matA-\matAhom)\nabla v\quad\text{and}\quad 
f=-  (\sigma_i\cdot \nabla^*)(\eta\nabla_iv) -  \matA\left(\big.(\phi_i\nabla)(\eta\nabla_i v)\right),
\end{align}
and let $g\colon\E ^d_\pm\to\R  $ be the function which satisfies that for all $(x,y)\in  \E ^d_\pm$ it holds that
\begin{align}\label{x28}
g(x,y)=(\omega \nablaBar w)_{xy} +(1-\eta(x))\left((\omega-\omega_\mathrm{h})\nablaBar v \right)_{xy}.
\end{align}
The fact that $\forall\,x\in \Z^d\setminus D_{R-4}\colon  \eta(x)=0$ and the fact that
$\forall\, x\in\Z^d\colon w(x)=u(x)-v(x)-\eta(x)\phi_i(x)(\nabla_i v)(x)$ imply 
 for all $y\in \tilde{\partial}D_R$ that $w(y)= u(y)-v(y)$ and
\begin{align}
\begin{aligned}
g(\funcNu{R}(y))
&=\left(\omega\nablaBar w +(\omega-\omega_\mathrm{h})\nablaBar v  \right)_{\funcNu{R}(y)}\\
&
=\left(\omega(\nablaBar u-\nablaBar v)+(\omega-\omega_\mathrm{h})\nablaBar v\right)_{\funcNu{R}(y)} 
= (\omega\nablaBar{u}-\omega_\mathrm{h}\nablaBar{v})_{\funcNu{R}(y)}.
\end{aligned}
\end{align}
Therefore, it holds that
\begin{align}
\begin{split}
&\left[\sum_{y\in \tilde{\partial}D_R}
w(y)g(\funcNu{R}(y))\right]=\left[
\sum_{y\in \tilde{\partial}D_R}
(u(y)-v(y))
\left(\omega\nablaBar{u}-\omega_\mathrm{h}\nablaBar{v}\right)_{\funcNu{R}(y)}\right].
\end{split}
\end{align}
Furthermore, 
\cref{a73} and \eqref{x27} imply that $\nabla^*\cdot h=\nabla^*\cdot f$. This,
\eqref{x27}, \eqref{x28} and
\cref{b66} (with  $f\defeq f$, $g\defeq g$, $h\defeq h$, $w\defeq w$)
ensure that 
\begin{align}
\begin{split}
&\left[\sum_{\substack{ x,y\in\Z^d\colon  (x,y)\in E_{R}}} \left[(\omega \nablaBar w)_{xy} +(1-\eta(x))\left((\omega-\omega_\mathrm{h})\nablaBar v \right)_{xy}\right]\nablaBar_{xy} w\right]\\
&=\left[ \sum_{\substack{  x,y\in\Z^d\colon (x,y)\in E_{R}}}g(x,y)\nablaBar_{xy}w\right]=
2\left[\sum_{y\in \tilde{\partial}D_R}
w(y)g(\funcNu{R}(y))\right]+2\left[\sum_{x \in D_R}\nabla w(x)\cdot f(x)\right]
\\&=2\left[
\sum_{y\in \tilde{\partial}D_R}
(u(y)-v(y))
\left(\omega\nablaBar{u}-\omega_\mathrm{h}\nablaBar{v}\right)_{\funcNu{R}(y)}\right]
\\&\qquad-2\left[\sum_{x\in {D}_{R}} \left((\sigma_i\cdot \nabla^*)(\eta\nabla_iv) +  \matA\left(\big.\!\left(\phi_i\nabla)(\eta\nabla_i v\right)\right)\right)(x)\cdot \nabla w(x)\right].\end{split}
\end{align}
This implies \eqref{x26}. The proof of \cref{x25} is thus completed.
\end{proof}

\section{Smoothing functions in the discrete case}\label{b17}
It is known  that continuous functions on the $d$-dimensional unit sphere can be smoothed since in this case there is an explicit formula for the convolution kernel (see formulas (50) and (51) in Bella, Fehrman, and Otto~\cite{BFO18}). In fact, we can even smooth functions on boundaries of Lipschitz domains, e.g., on the surface of the unit box $[0,1]^d$, as in \cref{b34} below. Roughly speaking, 
up to some technicality, all results in~\cite{BFO18}  are still true for continuum boxes instead of balls.

For \cref{b34} and also for the rest of this section we  use the  notation in \cref{x01} below.
\begin{setting}\label{x01}
For every metric space $X$ 
let $\mathrm{Lip}(X)$ denote the set of Lipschitz continuous functions from $X$ to $\R$.
For every bounded Lipschitz domain $\Omega\subseteq \R^d $,  i.e.,  the boundary $\partial\Omega$ is, roughly speaking, locally the graph of a Lipschitz function,  and for every $f\in \mathrm{Lip}(\partial\Omega)$
 denote by 
$\int_{\partial\Omega} f \,d\sigma$ 
the surface integral of $f$ on $\partial\Omega$
(see Section 2.1 in~\cite{MM13}) and denote by $\nabla^\mathrm{tan}f$ 
the tangential gradient of $f$ on $\partial\Omega$, which is well-defined $d\sigma$-almost everywhere on $\partial\Omega$
(see Section 2.2 in~\cite{MM13}).  
For
every bounded Lipschitz domain $\Omega\subseteq \R^d $ and every $p\in [1,\infty]$
the Lebesgue space $L^p(\partial\Omega)$ with the norm $\|\cdot \|_{L^p(\partial\Omega)}$
and the Sobolev space $L^p_1(\partial\Omega)$ with the norm $\|\cdot \|_{L^p_1(\partial\Omega)}$ are given in a usual way, e.g.,
$\forall\, f\in L^p_1(\partial \Omega)\colon \|f \|_{L^p_1(\partial\Omega)}\colon = \|f \|_{L^p(\partial\Omega)} +
\|\nabla^\mathrm{tan}f \|_{L^p(\partial\Omega,\R^{d-1})}
$ (see Section 2.3 in~\cite{MM13}). 
\end{setting}
\begin{lemma}[Smoothing operators on  boundaries of Lipschitz domains]\label{b34}Let $\Omega$ be a bounded Lipschitz domain. Then there exist 
$\epsilon_0\in (0,1)$, $C_1\colon  (1,\infty)\times(1,\infty)\to (0,\infty)$, $C_2\colon [1,\infty]\to (0,\infty)$
and there exist linear operators $\contSmooth{\epsilon}\colon \mathrm{Lip}(\partial\Omega)\to \mathrm{Lip}(\partial\Omega)$, $\epsilon\in (0,\epsilon_0)$,
such that  for all $u\in \mathrm{Lip}(\partial\Omega)$, 
$r,s\in [1,\infty]$ with $1\leq  s\leq r\leq  \infty$,  $\epsilon\in (0,\epsilon_0)$
it holds that
\begin{align}
\label{b35}
\|\contSmooth{\epsilon}u\|_{L^r(\partial\Omega)}
&\leq C_1(r,s) \epsilon^{-(d-1)(\frac1s-\frac1r)}\|u\|_{L^s(\partial\Omega)},\\
\label{b37}
\|\nabla^\mathrm{tan} (\contSmooth{\epsilon}u)\|_{L^r(\partial\Omega)}
&\leq C_1(r,s) \epsilon^{-(d-1)(\frac1s-\frac1r)}\|\nabla^\mathrm{tan}u\|_{L^s(\partial\Omega)},\\
\label{b36}
\|u-\contSmooth{\epsilon}u\|_{L^s(\partial\Omega)}&\leq C_2(s) \epsilon\|\nabla^\mathrm{tan}u\|_{L^s(\partial\Omega)}.
\end{align}
\end{lemma}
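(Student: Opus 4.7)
The plan is to reduce the construction to standard mollification in $\R^{d-1}$ via bi-Lipschitz charts of $\partial\Omega$ together with a subordinate Lipschitz partition of unity. Since $\Omega$ is a bounded Lipschitz domain, $\partial\Omega$ admits a finite open cover $\{U_j\}_{j=1}^N$ with bi-Lipschitz parametrizations $\Phi_j\colon B_j\subseteq\R^{d-1}\to U_j\cap\partial\Omega$ of uniformly bounded Lipschitz constant, and surface measure on $U_j\cap\partial\Omega$ is equivalent (with uniform comparison constants) to the push-forward of $(d-1)$-dimensional Lebesgue measure. Fix a smooth partition of unity $\{\chi_j\}_{j=1}^N$ subordinate to $\{U_j\}$ on $\partial\Omega$ with $\sum_j\chi_j=1$, and a symmetric nonnegative mollifier $\rho\in C_c^\infty(B_1(0))$ with $\int\rho=1$; set $\rho_\epsilon(y):=\epsilon^{-(d-1)}\rho(y/\epsilon)$. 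Choose $\epsilon_0>0$ small enough that an $\epsilon$-neighborhood of $\mathrm{supp}(\chi_j\circ\Phi_j)$ lies in $B_j$ for all $j$ and all $\epsilon\in(0,\epsilon_0)$, and define
\begin{equation*}
\contSmooth{\epsilon}u\;:=\;\sum_{j=1}^{N}\bigl[\rho_\epsilon\ast(\chi_j\,u\circ\Phi_j)\bigr]\circ\Phi_j^{-1},
\end{equation*}
with each summand extended by zero outside $U_j$.

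Estimate \eqref{b35} follows from Young's convolution inequality in $\R^{d-1}$: when $1\leq s\leq r\leq\infty$ and $1+1/r=1/t+1/s$,
\begin{equation*}
\|\rho_\epsilon\ast f\|_{L^r(\R^{d-1})}\leq\|\rho\|_{L^t(\R^{d-1})}\,\epsilon^{-(d-1)(1/s-1/r)}\|f\|_{L^s(\R^{d-1})}.
\end{equation*}
Applied chart-by-chart with $f=\chi_j\,u\circ\Phi_j$, using the bi-Lipschitz equivalence of measures and $\|\chi_j\|_\infty\leq 1$, and summed over $j$, this yields \eqref{b35}. Estimate \eqref{b36} reduces similarly, via $u=\sum_j\chi_j u$, to the classical approximation bound
\begin{equation*}
\|f-\rho_\epsilon\ast f\|_{L^s(\R^{d-1})}\leq C\,\epsilon\,\|\nabla f\|_{L^s(\R^{d-1})},
\end{equation*}
itself proved by writing the difference as $\int\rho_\epsilon(y)[f(\cdot)-f(\cdot-y)]\,dy$ and combining the fundamental theorem of calculus with Minkowski's integral inequality, then applied to $f=\chi_j\,u\circ\Phi_j$ and summed.

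The hard part will be the tangential gradient estimate \eqref{b37}. Commuting differentiation with convolution and applying Leibniz inside each chart yields
\begin{equation*}
\nabla\bigl[\rho_\epsilon\ast(\chi_j\,u\circ\Phi_j)\bigr]\;=\;\rho_\epsilon\ast\bigl(\chi_j\,\nabla(u\circ\Phi_j)\bigr)\;+\;\rho_\epsilon\ast\bigl((u\circ\Phi_j)\,\nabla\chi_j\bigr).
\end{equation*}
Through the chain rule and the uniform Lipschitz bounds on $\Phi_j^{\pm1}$, the first contribution is bounded in $L^r$ by Young's inequality in terms of $\|\nabla^{\mathrm{tan}}u\|_{L^s}$ with exactly the required scaling. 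The second contribution is the obstacle: naively it produces a bound in $\|u\|_{L^s}$ instead of $\|\nabla^{\mathrm{tan}}u\|_{L^s}$. The resolution exploits the partition-of-unity identity $\sum_j\nabla^{\mathrm{tan}}\chi_j=\nabla^{\mathrm{tan}}(\sum_j\chi_j)=0$: writing each second-type summand as $\rho_\epsilon\ast h_j=h_j+(\rho_\epsilon\ast h_j-h_j)$ with $h_j:=(u\circ\Phi_j)\,\nabla\chi_j$, the leading sum $\sum_j h_j\circ\Phi_j^{-1}=u\,\nabla^{\mathrm{tan}}\bigl(\sum_j\chi_j\bigr)$ vanishes identically, while the remainder $\rho_\epsilon\ast h_j-h_j$ is controlled by the approximation estimate above, at the cost of an extra factor $\epsilon$ that upgrades the dependence to $\|\nabla^{\mathrm{tan}}u\|_{L^s}$ after a further application of Young's inequality. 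This cancellation via the partition-of-unity constraint is the one nontrivial algebraic step; everything else is bookkeeping.
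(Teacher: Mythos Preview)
Your overall strategy---localize via bi-Lipschitz charts and a partition of unity, mollify in $\R^{d-1}$---matches the sketch the paper gives, and your treatment of \eqref{b35} and \eqref{b36} is fine. However, there is a genuine gap in your argument for \eqref{b37}, and in fact the operator you define does \emph{not} satisfy \eqref{b37}.

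Apply \eqref{b37} with $u\equiv 1$: the right-hand side vanishes, so $\contSmooth{\epsilon}1$ must be constant on $\partial\Omega$. With your definition,
\[
(\contSmooth{\epsilon}1)(p)=\sum_{j}\bigl[\rho_\epsilon\ast(\chi_j\circ\Phi_j)\bigr](\Phi_j^{-1}(p)),
\]
and since the convolutions are computed in \emph{different} coordinate systems, this is in general not constant (it equals $1+O(\epsilon)$, not $1$). The flaw shows up precisely in your remainder step: you need $\|\rho_\epsilon\ast h_j-h_j\|_{L^r}$ with $h_j=(u\circ\Phi_j)\,\nabla(\chi_j\circ\Phi_j)$; any approximation-plus-Young argument yields at best a bound by $C\epsilon^{\,1-(d-1)(1/s-1/r)}\|\nabla h_j\|_{L^s}$, but $\nabla h_j$ contains the term $(u\circ\Phi_j)\,\nabla^2(\chi_j\circ\Phi_j)$, which reinstates $\|u\|_{L^s}$ and admits no further cancellation. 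So the ``extra factor $\epsilon$ upgrades the dependence to $\|\nabla^{\mathrm{tan}}u\|_{L^s}$'' claim is not correct.

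The easy fix is to place the cut-offs \emph{outside} the mollification:
\[
\contSmooth{\epsilon}u:=\sum_{j}\chi_j\cdot\bigl[\rho_\epsilon\ast(u\circ\Phi_j)\bigr]\circ\Phi_j^{-1}.
\]
Then $\contSmooth{\epsilon}1=\sum_j\chi_j=1$, estimates \eqref{b35} and \eqref{b36} go through as before, and for \eqref{b37} the Leibniz rule gives $\nabla^{\mathrm{tan}}(\chi_j F_j)=\chi_j\,\nabla^{\mathrm{tan}}F_j+(\nabla^{\mathrm{tan}}\chi_j)F_j$ with $F_j=[\rho_\epsilon\ast(u\circ\Phi_j)]\circ\Phi_j^{-1}$. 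The first summand is handled directly by Young. For the second, your cancellation idea now works cleanly: $\sum_j(\nabla^{\mathrm{tan}}\chi_j)F_j=\sum_j(\nabla^{\mathrm{tan}}\chi_j)(F_j-u)$, and in chart coordinates $F_j-u$ is $\rho_\epsilon\ast(u\circ\Phi_j)-(u\circ\Phi_j)$, whose $L^r$ norm is bounded (via the integral representation $\rho_\epsilon\ast v-v=-\epsilon\int_0^1(\Psi_k)_{t\epsilon}\ast\partial_k v\,dt$ and Young) by $C\epsilon^{\,1-(d-1)(1/s-1/r)}\|\nabla(u\circ\Phi_j)\|_{L^s}$, with no second derivatives of $\chi_j$ appearing.
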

 A typical construction for the sequence of smoothing operators 
$(\mathcal{S}_\epsilon)$ in \cref{b34} 
 can be sketched as follows: 
\begin{inparaenum}[(i)]
\item construct a partition of unity, \item 
decompose functions into ''small pieces'' compactly supported on local charts, \item lift them to the Euclidean space to smooth them using a usual sequences of mollifiers (the scale of the mollifiers should be small enough so that the outcome functions are still supported in the corresponding local charts), 
and \item add those outcome functions together. 
\end{inparaenum}
Since it is quite technical (but more or less straightforward) and it is not the novelty of this paper, the proof of \cref{b34} is omitted.

%\subsection{A linear interpolation of discrete functions to the continuum}\label{b18}
A more important issue is to find an idea to adapt \cref{b34} to the discrete case.
To the best  of the author's knowledge, it is difficult to directly smooth functions in the discrete case where the scale of a mollifier must be at least $1$.
Here, we choose a hybrid solution: we will interpolate discrete functions to the continuum and smooth them (as continuum funtions) using \cref{b34}. First,
there are several ways to interpolate discrete functions. For convenience we choose the polilinear interpolation already used by Deuschel, Giacomin, and Ioffe (see formula (1.17) in~\cite{DGI00}), which allows to start quickly from scratch. 
In \cref{b19b} the continuum interpolation 
of a given discrete function is constructed on each $(d-1)$-dimensional unit box by moving this unit box to the reference box $[0,1]^{d-1}$ and
using \eqref{b19}. Note that \cref{b19c} ensures  the consistency of this construction.
\begin{setting}[Interpolation for the reference element]\label{b19a}Let $L\colon\R^{ \{0,1\}^{d-1}}\to C([0,1]^{d-1},\R ) $ be the operator which satisfies for all $u\colon \{0,1\}^{d-1}\to \R $, $x\in [0,1]^{d-1}$ that
\begin{align}\label{b19}
(L u)(x)= \sum_{a\in \{0,1\}^{d-1}} \left[\left(\prod_{i=1}^{d-1} \left(a_ix_i +(1-a_i)(1-x_i)\right)\right) u(a)\right].
\end{align}
\end{setting}
\begin{lemma}
\label{b19c}
Assume \cref{b19a}, let $k\in [1,d-1]$, let $j_1,\ldots,j_k\in [1,d-1]\cap\N$ satisfy that $j_1\leq \ldots\leq j_k$, let
$a_{j_1},\ldots,a_{j_k}\in \{0,1\}$, and let $u\colon \{0,1\}^{d-1}\to\R$.
Then the values of $Lu$ on the $(d-1-k)$-dimensional box
$\left\{x\in [0,1]^{d-1}\colon x_{j_1}=a_{j_1},\ldots,x_{j_k}=a_{j_k} \right\}$
are uniquely determined by the values of $u$ on
the set of vertices 
$\left\{x\in \{0,1\}^{d-1}\mid x_{j_1}=a_{j_1},\ldots,x_{j_k}=a_{j_k} \right\}$.
\end{lemma}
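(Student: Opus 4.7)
The plan is a direct evaluation of the formula \eqref{b19} at a point on the prescribed face. Fix $x\in[0,1]^{d-1}$ with $x_{j_1}=a_{j_1},\dots,x_{j_k}=a_{j_k}$, and consider the summand in \eqref{b19} associated with an arbitrary vertex $b\in\{0,1\}^{d-1}$ (I rename the dummy index to $b$ to avoid a clash with the prescribed values $a_{j_\ell}$). The weight attached to $u(b)$ is $\prod_{i=1}^{d-1}\bigl(b_i x_i+(1-b_i)(1-x_i)\bigr)$, so it suffices to show that this weight vanishes whenever $b_{j_\ell}\neq a_{j_\ell}$ for some $\ell\in[1,k]\cap\N$.

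For this I would inspect the single factor indexed by $i=j_\ell$. Since $x_{j_\ell}=a_{j_\ell}\in\{0,1\}$, the elementary identity
\begin{equation*}
b_{j_\ell}\,a_{j_\ell}+(1-b_{j_\ell})(1-a_{j_\ell})=\1_{\{b_{j_\ell}=a_{j_\ell}\}}
\end{equation*}
holds (check the four cases $b_{j_\ell},a_{j_\ell}\in\{0,1\}$). Thus the weight in front of $u(b)$ reduces to
\begin{equation*}
\Biggl[\prod_{\ell=1}^{k}\1_{\{b_{j_\ell}=a_{j_\ell}\}}\Biggr]\cdot\prod_{i\in[1,d-1]\cap\N\setminus\{j_1,\dots,j_k\}}\bigl(b_i x_i+(1-b_i)(1-x_i)\bigr),
\end{equation*}
which is zero unless $b_{j_\ell}=a_{j_\ell}$ for every $\ell\in[1,k]\cap\N$.

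Consequently, at any $x$ in the prescribed face the sum in \eqref{b19} collapses to
\begin{equation*}
(Lu)(x)=\sum_{\substack{b\in\{0,1\}^{d-1}\\ b_{j_1}=a_{j_1},\dots,b_{j_k}=a_{j_k}}}\!\!\Biggl[\prod_{i\in[1,d-1]\cap\N\setminus\{j_1,\dots,j_k\}}\!\bigl(b_i x_i+(1-b_i)(1-x_i)\bigr)\Biggr]u(b),
\end{equation*}
so the values of $Lu$ on the face depend only on the values $u(b)$ with $b_{j_1}=a_{j_1},\dots,b_{j_k}=a_{j_k}$, which is exactly the claim. There is essentially no obstacle here: the argument is a one-line computation exploiting that the multilinear basis functions $x\mapsto b_ix_i+(1-b_i)(1-x_i)$ are the tensor products of the standard $\{0,1\}$-nodal basis on $[0,1]$, so restricting to a face of the cube is the same as restricting the tensor product to the corresponding lower-dimensional sub-tensor-product. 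The only bookkeeping point to handle carefully is the notational distinction between the summation index and the prescribed boundary values, which I would address by the renaming $a\rightsquigarrow b$ used above.
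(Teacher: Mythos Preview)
Your argument is correct. The paper states this lemma without proof (it is an elementary observation about multilinear interpolation), so your direct computation exploiting the identity $b_{j_\ell}a_{j_\ell}+(1-b_{j_\ell})(1-a_{j_\ell})=\1_{\{b_{j_\ell}=a_{j_\ell}\}}$ is exactly the sort of verification the paper leaves to the reader.
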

\begin{setting}[Global interpolation and nodal functions]\label{b19b}Assume \cref{b19a}. For every $R\in\N$  we chop 
$\partial C_R$ into $(d-1)$-dimensional unit boxes and denote this set of boxes by $\mathcal{S}_R$ which is formally given by
$
\mathcal{S}_R=\{ 
\Gamma = a+ ([0,1]^{j-1}\times \{0\}\times [0,1]^{d-j}\mid 
(\Gamma\subseteq \partial C_R)\wedge  (a\in\Z^d) \wedge (j\in [1,d]\cap \N )\} $.
For every $R\in\N$, $S\in \mathcal{S}_R$
let $A_S^{(R)}\colon \R^{d-1}\to\R^d$  be the bijective affine linear transformation which satisfies for all $S\in \mathcal{S}_R$  that $S= A_S^{(R)}(\{0,1\}^{d-1})$.
For every $R\in\N$
let $T_R\colon \R^{\partial D_R}\to C(\partial C_R,\R)$ be the mapping which satisfies that for all  $u\colon\partial D_R\to\R$,
$S\in \mathcal{S}_R$, $x\in S$ it holds that
$(T_R (u))(x)= (L (u\circ A_S^{-1})) (A_S^{-1}x)$.
For every $R\in\N$, $x\in\partial D_R$ 
let
$\nodal{R}{x}\in C(\partial C_R,\R) $ be the function which satisfies that
 $\nodal{R}{x}= T_R(\1_{\{x\}}\upharpoonright_{\partial D_R})$ where we write $\1_{\{x\}}\upharpoonright_{\partial D_R}\colon\partial D_R\to\R $ to denote the function which satisfies for all 
$y\in\partial D_R\setminus\{x\}$ that $(\1_{\{x\}}\upharpoonright_{\partial D_R}) (y)=0$ and 
$(\1_{\{x\}}\upharpoonright_{\partial D_R}) (x)=1$.
\end{setting}
In finite element the functions $\nodal{R}{x}$, $R\in\N$, $x\in\partial D_R$, are often called nodal functions. \cref{b20} is straightforward and its proof is therefore omitted.
\begin{lemma}\label{b20}Assume \cref{b19b}. Then 
\begin{enumerate}[i)]
%\item 
%it holds for all $R\in\N$ that
%$T_R $ is  linear  and injective,
\item \label{b20b}
it holds for all $R\in\N$ that
$\{\nodal{R}{x}\mid x\in \partial D_R\}$ is linearly independent, 
\item \label{b20a}
it holds for all $R\in\N$, $\Gamma\in\mathcal{S}_R$, $x\in \partial D_R\setminus \Gamma$, $y\in \Gamma$ that $\varphi_x^{(R)}(y)=0$,
\item \label{b20d}
it holds for all $R\in\N$, $y\in \partial C_R$ that
$(T_R(f))(y)= \sum_{x\in\partial D_R}f(x)\varphi_x^{(R)}(y)$,
and
\item \label{b20e}
there exists $C\colon [1,\infty]\to (0,\infty)$ which satisfies  
for all $R\in \N$, $u\colon \partial D_R\to \R $  that 
\begin{align}\label{b26}
C(p)^{-1} \normB{T_R u}{p}{\partial{C}_R}\leq 
\norm{u}{p}{\partial{D}_R} \leq C(p) \normB{T_R u}{p}{\partial{C}_R}
\end{align}
and
\begin{align}
\label{b27}
C(p)^{-1}\normB{\nabla^\mathrm{tan}(T_R u)}{p}{\partial{C}_R}\leq 
\norm{\overline\nabla u}{p}{\etan{R}} \leq C(p)\normB{\nabla^\mathrm{tan}(T_R u)}{p}{\partial{C}_R}.\end{align}
\end{enumerate}
\end{lemma}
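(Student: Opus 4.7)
The plan is to verify items (ii), (iii), (i), (iv) in this order, exploiting the consistency provided by \cref{b19c} and the fact that the polilinear interpolation $L$ in \eqref{b19} is the unique multilinear function on $[0,1]^{d-1}$ matching prescribed values at the $2^{d-1}$ vertices.

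First I would establish the \emph{Kronecker-delta} property $\nodal{R}{x}(y) = \1_{\{x\}}(y)$ for $x,y \in \partial D_R$. Indeed, for any unit box $\Gamma \in \mathcal{S}_R$ containing $y$, the restriction of $\nodal{R}{x}$ to $\Gamma$ equals, after pulling back via $A_\Gamma^{(R)}$, the multilinear interpolation of $(\1_{\{x\}}\upharpoonright_{\partial D_R})$ sampled at the $2^{d-1}$ vertices of $\Gamma$; plugging a vertex $y$ into \eqref{b19} collapses the product to $1$ if $y=x$ and $0$ otherwise. This immediately yields item (ii): when $x \notin \Gamma$ the data restricted to the vertices of $\Gamma$ vanishes identically, and \eqref{b19} then produces the zero function on $\Gamma$. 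It also gives item (i): evaluating a linear relation $\sum_x c_x \nodal{R}{x} = 0$ at each $y \in \partial D_R$ forces $c_y = 0$. Item (iii) is immediate from the linearity of $L$, hence of $T_R$, by writing $f = \sum_{x \in \partial D_R} f(x) (\1_{\{x\}}\upharpoonright_{\partial D_R})$ and applying $T_R$.

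For item (iv) I would reduce to a finite-dimensional norm equivalence on the reference $(d-1)$-box $[0,1]^{d-1}$. Since $L \colon \R^{\{0,1\}^{d-1}} \to C([0,1]^{d-1}, \R)$ is linear and injective by the vertex-Kronecker property, on the finite-dimensional space $\R^{\{0,1\}^{d-1}}$ the two quantities $v \mapsto \|v\|_{\ell^p(\{0,1\}^{d-1})}$ and $v \mapsto \|L(v)\|_{L^p([0,1]^{d-1})}$ are both norms, hence equivalent with $p$-dependent constants; differentiating \eqref{b19} in each coordinate, $v \mapsto \|\nabla L(v)\|_{L^p([0,1]^{d-1})}$ is analogously equivalent to the $\ell^p$ norm of the collection of edge differences $\{v(a+\unit{j}) - v(a) : a \in \{0,1\}^{d-1},\ a_j=0\}$ taken over $j \in [1,d-1] \cap \N$. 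Transporting these equivalences via the affine isometry $A_S^{(R)}$ to each $S \in \mathcal{S}_R$ and summing over $\mathcal{S}_R$ — noting that each $x \in \partial D_R$ is a vertex of only a $d$-dependent number of boxes in $\mathcal{S}_R$, and each edge in $\etan{R}$ lies in exactly one box of $\mathcal{S}_R$ up to orientation — yields \eqref{b26} and \eqref{b27}.

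The one subtle point, which I expect to be the only real obstacle, is the consistency of the piecewise definition of $T_R u$ across shared $(d-1-k)$-dimensional faces of neighbouring boxes in $\mathcal{S}_R$; this is precisely what \cref{b19c} is set up to guarantee, since the values on a shared sub-face depend only on the vertex data lying \emph{inside} that sub-face, so both adjacent boxes produce the same restriction there and the piecewise-defined $T_R u$ glues into a well-defined continuous function on $\partial C_R$. Also, to apply the reference-element gradient equivalence one must check that at an interior point of $\Gamma$ the intrinsic tangent directions of the $(d-1)$-box $\Gamma$ coincide with the tangential directions to $\partial C_R$, which is automatic because $A_\Gamma^{(R)}$ is an isometry of $\R^{d-1}$ onto the affine span of $\Gamma$. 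Everything else is routine, which explains the author's choice to omit the proof.
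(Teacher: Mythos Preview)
Your proposal is correct and matches the routine verification the paper has in mind; the paper itself omits the proof, stating only that ``\cref{b20} is straightforward and its proof is therefore omitted.'' One small imprecision: an edge in $\etan{R}$ need not lie in exactly one box of $\mathcal{S}_R$ (in $d\ge 3$ it can be shared by several adjacent $(d-1)$-boxes on the same face of $\overline{C}_R$), but the overlap is bounded by a constant depending only on $d$, which is all you need for the summing argument in item~(iv).
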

After being interpolated and smoothed a function $u\colon \partial D_R\to\R$ has already been very much deformed. In general, the outcome continuum function does not represent a discrete function, i.e., it is not in
$\mathrm{span}\{\phi_x^{(R)}\mid x\in\partial D_R\}$  and we therefore have to make it discrete again.
A natural way to define a discrete function $Af\colon\partial D_R\to\R$ from a given continuum function ${f}\in C(\partial C_R,\R)$ is, e.g.,
to assign each $x\in\partial D_R$ a
$(d-1)$-dimensional unit box 
$\Gamma_x\in \mathcal{S}_R$  such that $x\in\Gamma_x$ and
to let $Af$ be given for every $x\in\partial D_R$ by
$(Af)(x)= \int_{\Gamma_x} fd\sigma$. However, this kind of averaging is not a projection, i.e., $AT_R \neq \mathrm{id}$. In \cref{b39c} below we use the idea by Scott and Zhang~\cite{SZ90} to overcome this problem. Intuitively, this idea is to insert
$\psi^{(\Gamma,R)}_x$ in the integral in \eqref{c03} and
require \eqref{c01} to make $\Pi^{(R,\Gamma)}$ a projection.
\begin{lemma}[Scott-Zhang projection]\label{b39c}
Assume \cref{b19b}. 
For every $R\in \N$ let $ \mathbf{A}_R$ be the set 
given by
$\mathbf{A}_R=\{\Gamma=(\Gamma_x)_{ x\in \partial D_R}\subseteq \mathcal{S}_R\mid 
\forall x\in \partial D_R\colon 
x\in\Gamma_x\cap \Z^d\}$, i.e., $\mathbf{A}_R$ is, roughly speaking, the set of all possibilities to assign each $x\in \partial D_R$ to a unique $(d-1)$-dimensional unit box  $\Gamma_x\in \mathcal{S}_R$ with $x\in \Gamma_x$.
For every $R\in\N$, $\Gamma\in \mathbf{A}_R$,
$x\in \partial D_R$ let $\psi_x^{(R,\Gamma)}\in  C(\Gamma_x,\R)$  be the linear combination of the functions
$\varphi_z^{(R)}$, $z\in \Gamma_x\cap\Z^d$, with the property that for all $z\in \Gamma_x\cap\Z^d$ it holds that
\begin{align}\label{c01}
\int_{\Gamma_x}\psi_{x}^{(R,\Gamma)}\varphi^{(R)}_{z}\,d\sigma=\1_{\{x\}}(z).\end{align}
For every $R\in\N$, $\Gamma\in \mathbf{A}_R$ let $\Pi^{(R,\Gamma)}\colon \mathrm{Lip}(\partial C_R)\to \R^{\partial D_R}$ be the operator which satisfies for all  
$f\in \mathrm{Lip}(\partial C_R)$,
$x\in \partial D_R$ that
\begin{align}
(\Pi^{(\Gamma,R)} f) (x)= \int_{\Gamma_x}\psi_{x}^{(\Gamma,R)} f\,d\sigma.\label{c03}
\end{align}
Let $B\in[0,\infty]$ be the real extended number given by
\begin{align}\label{b47}
B=
\sup\left\{\|\psi_{x}^{R,\Gamma}\|_{L^\infty(\Gamma_x)}\,\middle| R\in\N,\Gamma \in \mathbf{A}_R,x\in \partial D_R\right\}.\end{align}
 Then 
\begin{enumerate}[i)]
\item\label{b40a} it holds 
that $B<\infty$,
\item\label{b40b} it holds 
for all $R\in \N$, $\Gamma\in \mathbf{A}_R$  that $\Pi^{(R,\Gamma)} T_R=\mathrm{id}$, and
\item\label{b40c} there exists $C\colon [1,\infty]\to(0,\infty)$ such that for all
$p\in [1,\infty]$,
$R\in \N$, $\Gamma\in \mathbf{A}_R$, $f\in\mathrm{Lip}(\partial C_R)$
it holds that
\begin{align}
\norm{\Pi^{(R,\Gamma)}f}{p}{\partial D_R} \leq C(p) \normB{f}{p}{\partial  C_R}
\quad\text{and}\quad
\norm{\nablaBar (\Pi^{(R,\Gamma)}f)}{p}{\etan{R}}\leq C(p) \normB{\nabla^\mathrm{tan} f}{p}{\partial{C}_R}
.
\end{align}
\end{enumerate}
\end{lemma}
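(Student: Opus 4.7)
The plan is to prove items \ref{b40a}, \ref{b40b}, \ref{b40c} in that order. Item \ref{b40a} is the technical heart and reduces to a fixed finite-dimensional problem on the reference element; item \ref{b40b} is a two-line consequence of the definitions and \cref{b20}; item \ref{b40c} is a Scott--Zhang-type bound built from \ref{b40a} and a local Poincar\'e inequality.

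For \ref{b40a}, the observation is that every $\Gamma_x\in\mathcal{S}_R$ is, up to an affine isometry of $\R^d$, a copy of the reference box $[0,1]^{d-1}$ with vertex set $\Gamma_x\cap\Z^d$ corresponding to $\{0,1\}^{d-1}$. Under this identification the restrictions $\nodal{R}{z}|_{\Gamma_x}$ for $z\in\Gamma_x\cap\Z^d$ become exactly the polilinear basis functions $L(\1_{\{a\}})$, $a\in\{0,1\}^{d-1}$, from \cref{b19a}. The defining relation \eqref{c01} then characterises $\psi_x^{(R,\Gamma)}|_{\Gamma_x}$ as the element of $\mathrm{span}\{L(\1_{\{a\}})\colon a\in\{0,1\}^{d-1}\}$ dual to $L(\1_{\{\tilde x\}})$ with respect to the $L^2([0,1]^{d-1})$ inner product, where $\tilde x\in\{0,1\}^{d-1}$ is the image of $x$. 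By item \ref{b20b} of \cref{b20} these basis functions are linearly independent, so the associated Gram matrix $M_{ab}=\int_{[0,1]^{d-1}}L(\1_{\{a\}})L(\1_{\{b\}})\,dx$ is symmetric positive definite, and $\psi_x^{(R,\Gamma)}$ is the solution of a single fixed linear system in $2^{d-1}$ unknowns; its $L^\infty$-norm is therefore bounded by a dimension-only constant, which establishes $B<\infty$.

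For \ref{b40b}, fix $u\colon\partial D_R\to\R$ and $x\in\partial D_R$; item \ref{b20a} of \cref{b20} (nodal functions for vertices outside $\Gamma_x$ vanish on $\Gamma_x$) combined with item \ref{b20d} yields $(T_R u)|_{\Gamma_x}=\sum_{z\in\Gamma_x\cap\Z^d}u(z)\,\nodal{R}{z}|_{\Gamma_x}$, so that \eqref{c03} and \eqref{c01} give $(\Pi^{(R,\Gamma)}T_R u)(x)=\sum_{z\in\Gamma_x\cap\Z^d}u(z)\,\1_{\{x\}}(z)=u(x)$. For the $L^p$-bound on $\Pi^{(R,\Gamma)}f$ in \ref{b40c}, H\"older together with \ref{b40a} and $|\Gamma_x|=1$ give $|(\Pi^{(R,\Gamma)}f)(x)|\leq B\|f\|_{L^p(\Gamma_x)}$; raising to the $p$-th power and summing, using that each $\Gamma\in\mathcal{S}_R$ contains at most $2^{d-1}$ lattice points, yields the first inequality.

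The gradient estimate in \ref{b40c} rests on the fact that $\Pi^{(R,\Gamma)}$ reproduces constants, which follows from \ref{b40b} together with the identity $\sum_{a\in\{0,1\}^{d-1}}\prod_{i=1}^{d-1}\bigl(a_ix_i+(1-a_i)(1-x_i)\bigr)=1$: this shows that $T_R$ sends the constant sequence to the constant function on $\partial C_R$, and hence $\Pi^{(R,\Gamma)}c=c$ for every constant $c$. For a tangential edge $(x,y)\in\etan{R}$ set $U_{xy}:=\Gamma_x\cup\Gamma_y$ and take $c_{xy}$ to be the mean of $f$ on $U_{xy}$; then $|\nablaBar_{xy}\Pi^{(R,\Gamma)}f|=|\Pi^{(R,\Gamma)}(f-c_{xy})(y)-\Pi^{(R,\Gamma)}(f-c_{xy})(x)|\lesssim \|f-c_{xy}\|_{L^p(U_{xy})}$ by H\"older and \ref{b40a}, and a local Poincar\'e inequality on $U_{xy}$ bounds this by $\|\nabla^{\mathrm{tan}}f\|_{L^p(U_{xy})}$. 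Raising to the $p$-th power, summing over tangential edges, and observing that each $\Gamma\in\mathcal{S}_R$ meets only $O(1)$ neighbours in $\mathcal{S}_R$ completes the proof. The main technical subtlety is the local Poincar\'e step: when $\Gamma_x$ and $\Gamma_y$ lie on different faces of $\partial C_R$, the set $U_{xy}$ is a bent Lipschitz surface patch rather than a flat rectangle; this is harmless because $\partial C_R$ admits only a fixed finite list of local shapes (flat patch, two faces meeting along a codimension-one edge of the cube, three faces meeting at a corner), each of which is a standard unit-scale Lipschitz domain, so that a single Poincar\'e constant depending only on $d$ suffices for all $R$ and $\Gamma$.
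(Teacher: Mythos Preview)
Your approach matches the paper's almost verbatim: reduce \ref{b40a} to the reference element, derive \ref{b40b} from \cref{b20}\,\ref{b20a},\,\ref{b20d} and the biorthogonality \eqref{c01}, and for the gradient bound in \ref{b40c} exploit that $\Pi^{(R,\Gamma)}$ reproduces constants together with a local Poincar\'e inequality and finite overlap.

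There is, however, one genuine gap in your gradient argument: the set $U_{xy}=\Gamma_x\cup\Gamma_y$ can be \emph{disconnected}. For instance take $d=2$, $x=(0,R)$, $y=(1,R)$ on the top edge; the assignment $\Gamma$ may legitimately pick $\Gamma_x=[-1,0]\times\{R\}$ and $\Gamma_y=[1,2]\times\{R\}$, which do not touch. On a disconnected patch the Poincar\'e inequality fails (take $f$ locally constant on each component with different values but with $\nabla^{\mathrm{tan}}f=0$ on $U_{xy}$), so the step $\|f-c_{xy}\|_{L^p(U_{xy})}\lesssim\|\nabla^{\mathrm{tan}}f\|_{L^p(U_{xy})}$ breaks down. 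Your list of ``local shapes'' (flat patch, bent patch, corner) implicitly assumed $\Gamma_x$ and $\Gamma_y$ are adjacent, which need not hold for an arbitrary $\Gamma\in\mathbf{A}_R$.

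The paper closes this gap by introducing a slightly larger \emph{connected} neighbourhood $M^{(R)}_{x,y}\supseteq\Gamma_x\cup\Gamma_y$ --- either a flat rectangle or a ``roof'' (union of two rectangular faces) with side lengths at most $3$ --- on which Poincar\'e holds with a constant depending only on $d$, together with a uniform bound $K$ on how many such $M^{(R)}_{x,y}$ cover any given point of $\partial C_R$. Once you enlarge $U_{xy}$ to such a connected $M^{(R)}_{x,y}$ your argument goes through unchanged.
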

\tdplotsetmaincoords{70}{30}
\begin{figure}\centering
\begin{subfigure}{.45\textwidth}
\begin{tikzpicture}[scale=1.5,tdplot_main_coords]
\draw (-0.5,0,0)--(3.5,0,0);
\draw (-0.5,0,1)--(3.5,0,1);
\draw (-0.5,1,1)--(3.5,1,1);
\draw (-0.5,0,-1)--(3.5,0,-1);
\draw (0,0,-1.5)--(0,0,1)--(0,1.5,1);
\draw (1,0,-1.5)--(1,0,1)--(1,1.5,1);
\draw (2,0,-1.5)--(2,0,1)--(2,1.5,1);
\draw (3,0,-1.5)--(3,0,1)--(3,1.5,1);
\draw plot [mark=*, mark size=1] coordinates{(2,0,0)}; 
\draw plot [mark=*, mark size=1] coordinates{(1,0,0)}; 
%\node [below left ] at (0,0,0){$y$};
\node [below left ] at (0,0,1){$\Gamma_x$};
\node [below left ] at (1,0,0){$x$};
\node [below left ] at (2,0,0){$y$};
\node [below right ] at (2,0,1){$\Gamma_y$};

\fill[pattern=north east lines, pattern color=red] 
(0,0,0)--(1,0,0)--(1,0,1)--(0,0,1) --(0,0,0);		
\fill[pattern=north west lines, pattern color=blue] 
(1,0,0)--(2,0,0)--(2,0,1)--(1,0,1) --(1,0,0);	
\draw [line width=1.5](0,0,-1)--(0,0,1)--(3,0,1)--(3,0,-1)--(0,0,-1); 		 
\end{tikzpicture}\caption{}\label{b42}
\end{subfigure}
\begin{subfigure}{.45\textwidth}
\begin{tikzpicture}[scale=1.5,tdplot_main_coords]
\draw (-0.5,0,0)--(3.5,0,0);
\draw (-0.5,0,1)--(3.5,0,1);
\draw (-0.5,0,-1)--(3.5,0,-1);
\draw (-0.5,1,1)--(3.5,1,1);
\draw (0,0,-1.5)--(0,0,1)--(0,1.5,1);
\draw (1,0,-1.5)--(1,0,1)--(1,1.5,1);
\draw (2,0,-1.5)--(2,0,1)--(2,1.5,1);
\draw (3,0,-1.5)--(3,0,1)--(3,1.5,1);
\fill[pattern=north west lines, pattern color=red] 
(0,0,0)--(1,0,0)--(1,0,1)--(0,0,1) --(0,0,0);		
\fill[pattern=north west lines, pattern color=blue] 
(2,0,0)--(3,0,0)--(3,0,-1)--(2,0,-1);
\draw plot [mark=*, mark size=1] coordinates{(1,0,0)}; 
\node [above right] at (1,0,0) {$x$};
\node [below left] at (0,0,.7){$\Gamma_x$};
\node [above right] at (2.5,1,1){$\Gamma_y$};
\draw plot [mark=*, mark size=1] coordinates{(2,0,0)}; 
\node [above right] at (2,0,0) {$y$};
\draw [line width=1.5](0,0,-1)--(0,0,1)--(3,0,1)--(3,0,-1)--(0,0,-1); 
\end{tikzpicture}\caption{}\label{b44}
\end{subfigure}
\begin{subfigure}{.45\textwidth}
\begin{tikzpicture}[scale=1.5,tdplot_main_coords]
\draw (-0.5,0,0)--(3.5,0,0);
\draw (-0.5,0,1)--(3.5,0,1);
\draw (-0.5,0,-1)--(3.5,0,-1);
\draw (-0.5,1,1)--(3.5,1,1);
\draw (0,0,-1.5)--(0,0,1)--(0,1.5,1);
\draw (1,0,-1.5)--(1,0,1)--(1,1.5,1);
\draw (2,0,-1.5)--(2,0,1)--(2,1.5,1);
\draw (3,0,-1.5)--(3,0,1)--(3,1.5,1);
\fill[pattern=north west lines, pattern color=red] 
(0,0,0)--(1,0,0)--(1,0,1)--(0,0,1) --(0,0,0);		
\fill[pattern=north west lines, pattern color=blue] 
(1,0,0)--(2,0,0)--(2,0,-1)--(1,0,-1);
\draw plot [mark=*, mark size=1] coordinates{(1,0,1)}; 
\node [above ] at (1,0,1) {$x$};
\node [below left] at (0,0,.7){$\Gamma_x$};
\node [below] at (1.5,0,-1){$\Gamma_y$};
\draw plot [mark=*, mark size=1] coordinates{(1,0,0)}; 
\node [above right] at (1,0,0) {$y$};
\draw[line width=1.5](0,0,-1)--(0,0,1)--(2,0,1)--(2,0,-1)--(0,0,-1); 
\draw[line width=1.5](0,0,1)--(0,1,1)--(2,1,1)--(2,0,1);
\end{tikzpicture}\caption{}\label{b43}
\end{subfigure}
%%%%%%%%%%%%%%%%%%%%%%%
\begin{subfigure}{.45\textwidth}
\begin{tikzpicture}[scale=1.5,tdplot_main_coords]
\draw (-0.5,0,0)--(3.5,0,0);
\draw (-0.5,0,1)--(3.5,0,1);
\draw (-0.5,0,-1)--(3.5,0,-1);
\draw (-0.5,1,1)--(3.5,1,1);
\draw (0,0,-1.5)--(0,0,1)--(0,1.5,1);
\draw (1,0,-1.5)--(1,0,1)--(1,1.5,1);
\draw (2,0,-1.5)--(2,0,1)--(2,1.5,1);
\draw (3,0,-1.5)--(3,0,1)--(3,1.5,1);
\fill[pattern=north west lines, pattern color=red] 
(0,0,0)--(1,0,0)--(1,0,1)--(0,0,1) --(0,0,0);		
\fill[pattern=north west lines, pattern color=blue] 
(2,0,1)--(3,0,1)--(3,1,1)--(2,1,1);
\draw plot [mark=*, mark size=1] coordinates{(1,0,1)}; 
\node [above] at (1,0,1) {$x$};
\node [below left] at (0,0,.7){$\Gamma_x$};
\node [above right] at (2.5,1,1){$\Gamma_y$};
\draw plot [mark=*, mark size=1] coordinates{(2,0,1)}; 
\node [above] at (2,0,1) {$y$};
\draw [line width=1.5](0,0,0)--(0,0,1)--(3,0,1)--(3,0,0)--(0,0,0); 
\draw[line width=1.5](0,0,1)--(0,1,1)--(3,1,1)--(3,0,1);
\end{tikzpicture}\caption{}\label{b45}
\end{subfigure}
\caption{An illustrative construction of a good cover of the surface of a box}
\label{b46}
\end{figure}
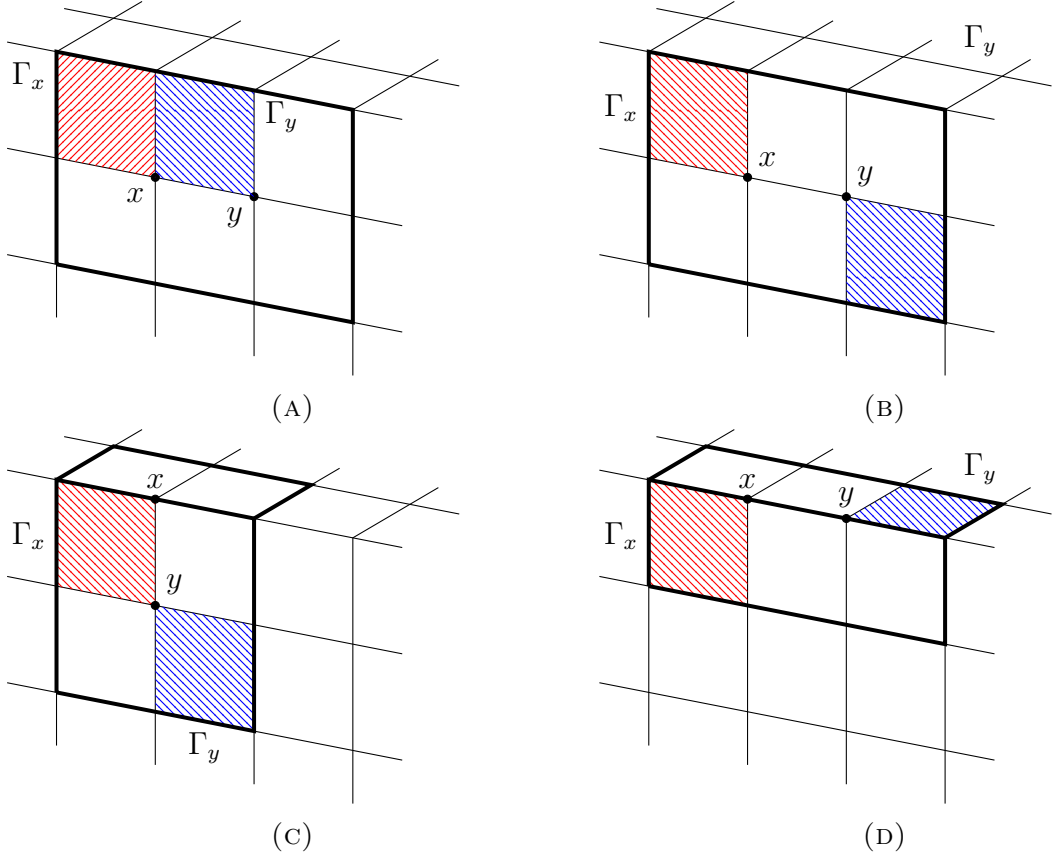
\begin{proof}[Proof of \cref{b39c}]
Since for every
$R\in\N$, $\Gamma \in \mathbf{A}_R$,
$x\in\partial D_R$
we can use an affine transformation to move $\Gamma_x$ to the reference box $[0,1]^{d-1}$, \cref{b40a} is obvious.
Next, Items \eqref{b20a} and \eqref{b20d} in \cref{b20} imply
 for all $R\in \N$, $\Gamma\in \mathbf{A}_R$, $f\colon \partial D_R\to\R$, $x\in\partial D_R$, $y\in\Gamma_x$ that
$(T_Rf)(y) = \sum_{z\in \Gamma_ x\cap\Z^d}f(z)\varphi_z^{(R)}(y)$. 
Hence, \eqref{c01} and \eqref{c03} show that for all 
$R\in \N$, $\Gamma\in \mathbf{A}_R$, $f\colon \partial D_R\to\R$, $x\in\partial D_R$ it holds that 
\begin{align}\begin{split}
(\Pi^{(\Gamma,R)} T_Rf) (x)&= 
\int_{\Gamma_x}\psi_{x}^{(\Gamma,R)} \left[\sum_{z\in \Gamma_ x\cap\Z^d}f(z)\varphi_z^{(R)}\right]\,d\sigma=\sum_{z\in \Gamma_ x\cap\Z^d}\left[f(z)
\int_{\Gamma_x}\psi_{x}^{(\Gamma,R)} \varphi_z^{(R)}\,d\sigma\right]
\\&=\sum_{z\in \Gamma_ x\cap\Z^d}f(z) \1_{\{x\}}(z)=f(x).
\end{split}\end{align}
This shows \cref{b40b}.
Next, observe that there exist $K\in \N $ and a collection of  $(d-1)$-dimensional manifolds with boundary $\{M_{x,y}^{(R)} \mid R\in\N, x,y\in \partial D_R,x\sim y\}$ such that 
for all $R\in \N$, $\Gamma\in\mathbf{A}_R$, $x,y\in \partial D_R$, $z\in \partial C_R$ 
\begin{enumerate}[i)]
\item it holds that
 $M_{x,y}^{(R)}$ is either a $(d-1)$-dimensional face or 
the union of two dimensional faces of a $d$-dimensional rectangle
whose edges are at most $3$
(informally speaking, 
$M_{x,y}^{(R)}$ is
either a rectangle or a ''roof'' of dimension $(d-1)$ with edges of size at most $3$), 
\item it holds that
 $\Gamma_x\cup\Gamma_y\subseteq M^{(R)}_{x,y}$,  and
\item it holds that
\begin{align}\label{m01}
\left|\left\{x\in \partial D_R\;\middle| z\in \Gamma_x \right\}\right|\leq K\quad\text{and}\quad \left|\left\{x,y\in\Z^d\;\middle| x\sim y, z\in M_{x,y}^{(R)}\right\}\right|\leq K.
\end{align}
\end{enumerate}
An illustrative argument for this (in the case $d=3$) is provided in \cref{b46}: 
\begin{inparaenum}[(a)]
\item 
If none of $x,y$ lies on an edge of the box $\overline{C}_R$, we choose $M^{(R)}_{x,y}$, e.g., as a $2\times 3$ rectangle, see \cref{b42,b44}; \item otherwise, we choose $M^{(R)}_{x,y}$ as a roof, see \cref{b43,b45}.
\end{inparaenum}
Note that $M_{x,y}^{(R)}$, $ R\in\N$, $x,y\in \partial D_R,x\sim y$, can only have a finite number of shapes. 
Hence,
Poincar\'e's inequality on Lipschitz manifolds implies that
there exists $\poincareConst\colon [1,\infty]\to (0,\infty) $ such that for all $R\in\N$, $x,y\in \partial D_R$, $f\in  L^p_1(M^{(R)}_{x,y})$, $p\in [1,\infty]$ it holds that $\inf_{a\in\R}\norm{f-a}{p}{M^{(R)}_{x,y}}\leq
\poincareConst(p) \normB{\nabla^{\mathrm{tan}}f }{p}{M^{(R)}_{x,y}} $.
Furthermore, \eqref{c03}, \eqref{b47}, and Jensen's inequality imply for all 
$R\in\N$, $\Gamma\in \mathbf{A}_R$,
$x\in \partial D_R$, $p\in [1,\infty)$, $f\in\mathrm{Lip}(\partial C_R)$ that 
$|(\Pi^{(R,\Gamma)} f)(x)|^p\leq B^p \normB{f}{p}{\Gamma_x}^p$ 
and 
$|(\Pi^{(R,\Gamma)} f)(x)|\leq B  \normB{f}{\infty}{\Gamma_x}$. 
Summing or taking the supremum (over $x\in \partial D_R$) and using \eqref{m01} yield that
for all 
$R\in\N$, $\Gamma\in \mathbf{A}_R$, $p\in[1,\infty] $, $f\in\mathrm{Lip}(\partial C_R)$ it holds that 
\begin{align}\label{c04}
\norm{\Pi^{(R,\Gamma)} f}{p}{\partial D_R}\leq BK^{1/p} \normB{f}{p}{\partial C_R}. 
\end{align}
Next, \cref{b40b} (with $f\defeq (\partial D_R\ni x\mapsto 1\in\R)$) implies 
for all $R\in\N $, $\Gamma\in\mathbf{A}_R$, $x\in\partial D_R$
that $\int_{\Gamma_x}\psi^{(R,\Gamma)}_x=1$. This, \eqref{c03}, the fact that $\forall\,p\in[1,\infty), a,b\in \R \colon |a+b|^p\leq 2^{p-1}(|a|^p+|b|^p) $, the triangle inequality, \eqref{b47}, and Poincar\'e's inequality  imply that for all 
 $R\in\N $, $\Gamma\in\mathbf{A}_R$, $x,y\in\partial D_R$, $p\in [1,\infty)$, $f\in\mathrm{Lip}(\partial C_R)$ it holds that
\begin{align}\begin{split}
&\left|(\Pi^{(R,\Gamma)} f)(x)-(\Pi^{(R,\Gamma)} f)(y)\right|^p=\inf_{a\in\R}\left |
\int_{\Gamma_x} \psi^{(R,\Gamma)}_x (f-a)d\sigma - \int_{\Gamma_y} \psi^{(R,\Gamma)}_y (f-a) d\sigma\right|^p\\
&\leq 2^{p-1} \inf_{a\in\R}\left[
\left |
\int_{\Gamma_x} \psi^{(R,\Gamma)}_x (f-a)d\sigma \right|^p+\left| \int_{\Gamma_y} \psi^{(R,\Gamma)}_y (f-a) d\sigma\right|^p\right]\\
&\leq 2^{p-1} B^p\left[\inf_{a\in\R}
\normB{f-a}{p}{M_{x,y}^{(R)}}^p\right]
\leq 2^{p-1}B ^p \poincareConst(p)^p\normB{\nabla^{\mathrm{tan}}f}{p}{M_{x,y}^{(R)}}^p
\end{split}\end{align}
and 
\begin{align}
\left|(\Pi^{(R,\Gamma)} f)(x)-(\Pi^{(R,\Gamma)} f)(y)\right|
\leq 2B\left[ \inf_{a\in \R}\normB{f-a}{\infty}{M_{x,y}^{(R)}}\right]
\leq 2B \poincareConst(\infty)\normB{\nabla^{\mathrm{tan}f}}{\infty}{M_{x,y}^{(R)}}
.
\end{align}
Summing or taking the supremum (over $x,y\in \partial D_R$ with $x\sim y$) and using \eqref{m01} yield  for all 
 $R\in\N $, $\Gamma\in\mathbf{A}_R$,  $p\in [1,\infty]$ that 
$
\norm{\nablaBar (\Pi^{R,\Gamma}f)}{p}{\etan{R}}\leq 
2^{(p-1)/p}K^{1/p}M \poincareConst(p) \normB{\nabla^\mathrm{tan} f}{p}{\partial{C}_R}.
$
This, \eqref{c04}, and the fact that $B<\infty$ (following from \cref{b40a}) imply \cref{b40c}. The proof of \cref{b39c} is thus completed.
\end{proof}

Now, for every bounded Lipschitz domain $\Omega$ and every $f\in\mathrm{Lip}(\partial \Omega)$
let 
$|f|_{L^p(\partial \Omega)}$ be the real number given by
$|f|_{L^p(\partial \Omega)}^p=\|f\|_{L^p(\partial \Omega)}/(\int_{\partial\Omega}d\sigma)$ and let 
$|f|_{L^\infty(\partial \Omega)}$ be the real number given by 
$|f|_{L^\infty(\partial \Omega)}=\|f\|_{L^\infty(\partial \Omega)}$.
Then \cref{b34} and a scaling argument show that
there exist 
$\epsilon_0\in (0,1)$, $C_1\colon  [1,\infty]\times[1,\infty]\to (0,\infty)$, $C_2\colon [1,\infty]\to (0,\infty)$
and there exist linear operators $\contSmooth{\epsilon}^{(R)}\colon \mathrm{Lip}(\partial C_R)\to \mathrm{Lip}(\partial C_R)$, $\epsilon\in (0,\epsilon_0)$, $R\in\N$,
such that  for all $R\in\N$, $u\in \mathrm{Lip}(\partial C_R)$, 
$r,s\in [1,\infty]$ with $1\leq  s\leq r\leq  \infty$,  $\epsilon\in (0,\epsilon_0)$
it holds that
\begin{align}
\label{b35a}
|\contSmooth{\epsilon}^{(R)}u|_{L^r(\partial C_R)}
&\leq C_1(r,s) \epsilon ^{-(d-1)(\frac1s-\frac1r)}|u|_{L^s(\partial C_R)},\\
\label{b37a}
|\nabla^\mathrm{tan} (\contSmooth{\epsilon}^{(R)}u)|_{L^r(\partial C_R)}
&\leq C_1(r,s) \epsilon^{-(d-1)(\frac1s-\frac1r)}|\nabla^\mathrm{tan}u|_{L^s(\partial C_R)},\\
\label{b36a}
|u-\contSmooth{\epsilon}^{(R)}u|_{L^s(\partial C_R)}&\leq C_2(s)\epsilon R |\nabla^\mathrm{tan}u|_{L^s(\partial C_R)}.
\end{align}
Using the notation given in \cref{b19b} 
let $\smooth{R}{\epsilon}^\Gamma\colon \R^{\partial D_R}\to\R^{\partial D_R}$, $\epsilon\in(0,\epsilon_0)$, $R\in\N$, $\Gamma\in \mathbf{A}_R$, be the linear operators which satisfy for all 
$\epsilon\in(0,\epsilon_0)$, $R\in\N$, $\Gamma\in \mathbf{A}_R$, $u\colon \partial D_R\to \R$ that
$\smooth{R}{\epsilon}^\Gamma u= \Pi^{\Gamma,R}\contSmooth{\epsilon}^{(R)} T_Ru$. The projection property (\cref{b40b} in \cref{b39c}) then shows  for all 
$\epsilon\in(0,\epsilon_0)$, $R\in\N$, $\Gamma\in \mathbf{A}_R$, $u\colon \partial D_R\to \R$ that
$u-\smooth{R}{\epsilon}^\Gamma u= 
\Pi^{\Gamma,R} (T_Ru-
 \contSmooth{\epsilon}^{(R)} T_Ru)$. Combining this, \cref{b20e} in \cref{b20}, and \cref{b40c} in \cref{b39c} we obtain \cref{a48a} below where $\Gamma$ is dropped. Note that although there are several ways to choose
$\Gamma$ and hence  several ways to define the family $(\smooth{R}{\epsilon})= (\smooth{R}{\epsilon}^\Gamma)$ of smoothing operators, the constants $C_1$ and $C_2$ in \eqref{a48}--\eqref{a50} only depend on $s,r$, and the dimension~$d$.
\begin{corollary}[Smoothing operators for discrete functions]\label{a48a}
There exist a real number $\epsilon_0\in (0,1)$, functions $C_1\colon [1,\infty]\times[1,\infty]\to (0,\infty)$, 
$C_2\colon [1,\infty]\to(0,\infty)$,
and   linear operators
$\smooth{R}{\epsilon}\colon  \R^{\partial D_R}\to  \R^{\partial D_R}$,
 $\epsilon\in (0,\epsilon_0)$, $R\in \N $, 
such that 
for all 
$s\in [1,\infty]$, $r\in [s,\infty]$, $R\in [1,\infty)\cap\N$, $u\colon \partial D_R\to\R$, $\epsilon\in (0,\epsilon_0)$ it holds that
\begin{align}
\avnorm{\smooth{R}{\epsilon}u}{r}{\partial D_R}
&\leq C_1(s,r)\epsilon ^{-(d-1)(\frac1s-\frac1r)}
\avnorm{u}{s}{\partial D_R},\label{a48} 
\\
\avnorm{\nablaBar (\smooth{R}{\epsilon}u) }{r}{\etan{R}}&\leq C_1(s,r)
\epsilon ^{-(d-1)(\frac1s-\frac1r)}
\avnorm{\nablaBar u}{s}{\etan{R}}\label{a49},
\\
\avnorm{u-\smooth{R}{\epsilon}u}{s}{\partial D_R}
&\leq C_2(s)\epsilon R \avnorm{\nablaBar u}{s}{\etan{R}}. \label{a50}\end{align}
\end{corollary}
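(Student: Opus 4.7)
My plan is to define the discrete smoothing operator by sandwiching the continuum smoothing between the polylinear interpolation and a Scott--Zhang projection: for a fixed choice of $\Gamma\in\mathbf{A}_R$ set
\begin{equation*}
\smooth{R}{\epsilon}u := \Pi^{(R,\Gamma)}\bigl(\contSmooth{\epsilon}^{(R)}(T_R u)\bigr),\qquad u\colon\partial D_R\to\R.
\end{equation*}
The three estimates \eqref{a48}--\eqref{a50} will then drop out by composing results already at our disposal: the continuum smoothing bounds \eqref{b35a}--\eqref{b36a} on the Lipschitz surface $\partial C_R$, the norm equivalence between discrete functions and their polylinear interpolations (\cref{b20e} in \cref{b20}), and the $L^p$- and gradient-boundedness of the Scott--Zhang projection (\cref{b40c} in \cref{b39c}).

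For the boundedness \eqref{a48}, I plan a direct three-step chain: apply \cref{b40c} in \cref{b39c} to pass from $\avnorm{\smooth{R}{\epsilon}u}{r}{\partial D_R}$ to the continuum $L^r$-norm of $\contSmooth{\epsilon}^{(R)}T_R u$ on $\partial C_R$, invoke \eqref{b35a} to reduce to $|T_R u|_{L^s(\partial C_R)}$ at the cost of $\epsilon^{-(d-1)(1/s-1/r)}$, and finally apply \eqref{b26} of \cref{b20} to return to $\avnorm{u}{s}{\partial D_R}$. The gradient estimate \eqref{a49} follows along exactly the same lines, with the gradient half of \cref{b40c}, the smoothing bound \eqref{b37a}, and the gradient equivalence \eqref{b27} in place of \eqref{b26}.

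The approximation estimate \eqref{a50} is the one for which the projection property of $\Pi^{(R,\Gamma)}$ is essential. Because $\Pi^{(R,\Gamma)}T_R=\mathrm{id}$ by \cref{b40b} in \cref{b39c}, one obtains the identity
\begin{equation*}
u-\smooth{R}{\epsilon}u \;=\; \Pi^{(R,\Gamma)}\bigl(T_R u-\contSmooth{\epsilon}^{(R)}T_R u\bigr).
\end{equation*}
Applying the $L^s$-boundedness of $\Pi^{(R,\Gamma)}$, then the continuum approximation bound \eqref{b36a} (which produces the factor $\epsilon R$), and finally the gradient equivalence \eqref{b27} yields \eqref{a50}.

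The only point still to check, and it is routine, is that the constants $C_1,C_2$ depend only on $s,r,d$ and not on $R$ or on the choice of $\Gamma\in\mathbf{A}_R$. This is immediate because each of the three ingredients has constants of exactly this form: the Scott--Zhang constants in \cref{b40c} are uniform in $\Gamma$ thanks to the bound $B<\infty$ from \cref{b40a}; the continuum constants in \eqref{b35a}--\eqref{b36a} already reflect the scaling to radius $R$; and the interpolation constants in \cref{b20e} depend only on the exponent and the dimension. The main conceptual work --- namely, constructing a bounded projection that preserves the range of $T_R$ --- has already been carried out in \cref{b39c} via the dual basis $\psi_x^{(R,\Gamma)}$, so no further obstacle remains at the level of \cref{a48a} itself; the present corollary is essentially an assembly statement.
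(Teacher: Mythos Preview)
Your proposal is correct and follows essentially the same approach as the paper: define $\smooth{R}{\epsilon}u=\Pi^{(R,\Gamma)}\contSmooth{\epsilon}^{(R)}T_Ru$, use the projection identity $\Pi^{(R,\Gamma)}T_R=\mathrm{id}$ to rewrite $u-\smooth{R}{\epsilon}u=\Pi^{(R,\Gamma)}(T_Ru-\contSmooth{\epsilon}^{(R)}T_Ru)$, and then chain the boundedness of $\Pi^{(R,\Gamma)}$ from \cref{b39c}\,\cref{b40c}, the scaled continuum bounds \eqref{b35a}--\eqref{b36a}, and the norm equivalences \eqref{b26}--\eqref{b27} from \cref{b20}. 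The paper's argument (given in the paragraph immediately preceding the corollary) is exactly this assembly, with the same observation that the resulting constants are uniform in $R$ and in the choice of $\Gamma$.
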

\section{Estimates on the harmonic extension in details}\label{b47c}
\renewcommand{\smooth}[2]{\mathbf{S}}
\renewcommand{\dualsmooth}[2]{\mathbf{S}^*}
\renewcommand{\DirichletExt}[2]{\mathbf{D}}
\renewcommand{\NeumannExt}[2]{\mathbf{N}}
\renewcommand{\funcNu}[1]{{\mathbf{n}}}
%%%%%%%%%%%

In \cref{b47b} below $\smooth{R}{\epsilon}$ is a smoothing operator (cf. \cref{a48a}) which depends on $R,\epsilon$ and $\modi{R}$ is a modifying operator (cf. \cref{a04}) which depends on $R$. 
Here, we drop the dependency on $R,\epsilon$ to lighten the notation.
All inequalities on $\smooth{R}{\epsilon}$ in \eqref{d06c} are direct consequences of \eqref{a48}--\eqref{a50}.
Recall that we use \cref{b66b} and \cref{f01} and in the following 
we briefly write $\funcNu{R}=\mathbf{n}_R$ for the normal mapping.

Discrete boundary problems have been widely studied in numerical analysis, e.g., to approximate the continuum solutions (see, e.g., the classical work by Stummel~\cite{Stu67} and
G\"urlebeck and Hommel \cite{Hom98}, \cite{GH01}, \cite{GH02},
who studied Dirichlet and Neumann boundary problems 
on general two-dimensional discretized domains
using difference potentials,
and the references therein). We therefore do not discuss the definitions of $\DirichletExt{R}{\epsilon}$ and $\NeumannExt{R}{\epsilon}$ in details. The condition $\sum_{y\in\tilde \partial D_R}u(y)-(\NeumannExt{R}{\epsilon}u)(y)=0$ is reasonable since 
uniqueness of solutions to Neumann problems only holds up to constants and
we can therefore adapt the mean of a solution by adding a constant. 

Note that 
\eqref{y22} is a consequence of Sobolev's inequality in the case $d\ge3$.
In the case $d=2$, i.e., when $\partial D_R$ is one-dimensional, this inequality is just a direct consequence of the triangle inequality, i.e., 
$\avnorm{u}{\infty}{\partial D_R}\lesssim R\avnorm{\nablaBar u}{1}{ \etan{R}}$.

Finally, since for the main results there is no need to provide more detailed estimates, we solely use  a real number $c\in(1,\infty)$  for several purposes
(see e.g. \eqref{d06c}, \eqref{y01c}, and \eqref{y22a}). 

\begin{setting}\label{b47b}
Let $\epsilon\in (0,1)$, $c\in (1,\infty)$, 
$p,q\in (1,\infty]$,
$R\in\N$, 
$\omega\in\Omega$,
let $\matAhom\in \R^{d\times d}$ be a diagonal matrix
which satisfies for all $i\in[1,d]\cap\N$ that $c^{-1}\leq (\matAhom)_{ii}\leq c$, 
let $\omegaH\in \Omega$ satisfy  for all $ i\in [1,d]\cap\N, x\in\Z^d$ that $ \omegaH(\{x,x+\unit{i}\}) =(\matAhom)_{ii}$,
and assume that
$
|\partial D_R|/ |\tilde{\partial} D_R|\leq c$ and $|\tilde{\partial} D_R|/|E_R|\leq c/R$. 
Let
$\smooth{R}{\epsilon}\colon \R^{\partial D_R}\to \R^{\partial D_R}$, $ 
\modi{R}\colon  \R^{ \partial D_R}\to \R^{ \partial D_R}$
be linear operators
which satisfy for all $u\colon \partial D_R\to\R$ that
\begin{align}\begin{split}
&\avnorm{u-\smooth{R}{\epsilon}u}{\frac{2p}{p-1}}{\partial D_R}
\leq c\epsilon R \avnorm{\nablaBar u}{\frac{2p}{p-1}}{\etan{R}}
,\quad
\avnorm{u-\smooth{R}{\epsilon}u}{\frac{2q}{q-1}}{\partial D_R}
\leq c\epsilon R \avnorm{\nablaBar u}{\frac{2q}{q-1}}{\etan{R}} , \\
&
\avnorm{\smooth{R}{\epsilon}u}{\frac{2p}{p-1}}{\partial D_R}
\leq c
\avnorm{u}{\frac{2p}{p-1}}{\partial D_R} ,\quad
  \avnorm{\smooth{R}{\epsilon}u}{\frac{2p}{p-1}}{\partial D_R}
\leq c
\avnorm{u}{\frac{2p}{p-1}}{\partial D_R} , 
\\
&
\avnorm{\smooth{R}{\epsilon}u}{\frac{2p}{p-1}}{\partial D_R}\leq \epsilon^{-(d-1)\frac{p+1}{2p}}
\avnorm{u}{1}{\partial D_R}
 , \quad
\avnorm{\smooth{R}{\epsilon}u}{\infty}{\partial D_R}\leq \epsilon^{-(d-1)\frac{q+1}{2q}}
\avnorm{u}{\frac{2q}{q+1}}{\partial D_R}
 , \\
& 
\avnorm{\nablaBar (\smooth{R}{\epsilon}u) }{\frac{2q}{q+1}}{\etan{R}}\leq c
\avnorm{\nablaBar u}{\frac{2q}{q+1}}{\etan{R}}
 ,\quad
 \avnorm{\nablaBar (\modi{R}u)}{\frac{2q}{q+1}}{\etan{R}}\leq 
\avnorm{\nablaBar u}{\frac{2q}{q+1}}{\etan{R}}
 ,\\&
\avnorm{\nablaBar(\modi{R}u)}{\frac{2p}{p+1}}{\etan{R}}\leq c \avnorm{\nablaBar u}{\frac{2p}{p+1}}{\etan{R}} ,\quad
  \avnorm{\modi{R}u}{\frac{2p}{p-1}}{\partial D_R}\leq c\avnorm{u}{\frac{2p}{p-1}}{\tilde{\partial} D_R} ,\quad
 \avnorm{\modi{R}u}{1}{\partial D_{R}}\leq c \avnorm{u}{1}{\tilde \partial D_{R}} \end{split}\label{d06c}
\end{align}
and let $\dualsmooth{R}{\epsilon}\colon \R^{\tilde \partial D_R}\to 
 \R^{\tilde \partial D_R}$
be the operator which satisfies for all
 $h\colon \tilde\partial D_R\to\R$, $x\in \tilde{\partial}D_R$  that
\begin{align}\label{d06a}
 (\dualsmooth{R}{\epsilon}h)(x) =
\sum_{y\in \tilde \partial D_{R}} h(y) \left(\smooth{R}{\epsilon}\modi{R} \left(\1_{\{x\}}\upharpoonright_{\partial D_R}\right) \right)(y).
\end{align}
Let $\DirichletExt{R}{\epsilon}\colon \R^{\overline   D_R}\to \R^{\overline   D_R}$ be the solution operator of the discrete Dirichlet problem in the sense that for all  $u \colon \overline {D}_R\to \R $ it holds that
\begin{align}
\forall x\in D_R\colon \quad (\nabla^*\cdot \matAhom\nabla(\DirichletExt{R}{\epsilon}u))(x)=0
\quad\text{and}
\quad 
\forall x\in \partial D_R\colon \quad  (\DirichletExt{R}{\epsilon}u)(x)=(\smooth{R}{\epsilon}u )(x),
\label{y01b}
\end{align}
let  $ \NeumannExt{R}{\epsilon}\colon \R^{\overline{D}_R}   \to \R^{\overline   D_R}$ be
a solution operator of the discrete Neumann problem in the sense that for all $u \colon \overline {D}_R\to \R $  it holds that 
\begin{align} \begin{split}
&\forall x\in \tilde \partial D_R\colon\quad  \left(\omegaH\nablaBar (\NeumannExt{R}{\epsilon}u)\right) _{\funcNu{R}(x)}= \left[\dualsmooth{R}{\epsilon}\left(\omega_\funcNu{R} \nabla_\funcNu{R} u \right)\right](x)-\frac{1}{|\tilde\partial D_R|}\left[
\sum_{y\in \tilde \partial D_R}\left[\dualsmooth{R}{\epsilon}\left(\omega_\funcNu{R} \nabla_\funcNu{R} u\right)\right](y)\right],\\
&\forall x\in D_R\colon\quad  (\nabla^*\cdot \matAhom\nabla(\NeumannExt{R}{\epsilon}u))(x)=0,\quad\text{and}\quad\left[ \sum_{y\in\tilde \partial D_R}u(y)-(\NeumannExt{R}{\epsilon}u)(y)\right]=0,
\end{split}\label{y01}
\end{align}
with $\funcNu{R}$ the normal mapping,
 assume  
for all $u \colon \overline {D}_R\to \R $,  $s\in\{2p/(p+1),2q/(q+1)\}$ that
\begin{align}
\avnorm{\nablaBar(\NeumannExt{R}{\epsilon}u)}{s}{\etan{R}}\leq c\avnorm{\nablaBar(\NeumannExt{R}{\epsilon}u)}{s}{\enor{R}}\quad\text{and}\quad
\avnorm{\nablaBar(\DirichletExt{R}{\epsilon}u)}{s}{\enor{R}}\leq c\avnorm{\nablaBar(\DirichletExt{R}{\epsilon}u)}{s}{\etan{R}},
\label{y01c}
\end{align}
and  assume for all $u \colon \overline {D}_R\to \R $ that
\begin{align}\begin{split}
\avnorm{\nablaBar (\DirichletExt{R}{\epsilon}u)}{\max\left\{\frac{4p}{p-1},
\frac{ 4q}{q-1}\right\}}{E_R}
&\leq c\avnorm{\nablaBar (\DirichletExt{R}{\epsilon}u)}{\max\left\{\frac{4p}{p-1},
\frac{ 4q}{q-1}\right\}}{\etan{R}},
\\
\avnorm{\nablaBar (\NeumannExt{R}{\epsilon}u)}{\max\left\{\frac{4p}{p-1},
\frac{ 4q}{q-1}\right\}}{E_R}
&\leq c \avnorm{\nablaBar (\NeumannExt{R}{\epsilon}u)}{\max\left\{\frac{4p}{p-1},
\frac{ 4q}{q-1}\right\}}{\enor{R}}.
\label{y01d}
\end{split}\end{align}
Let $\alpha\colon (1,\infty)\to\R$ be the function which satisfies for all 
$r\in \{p,q\}$ that
\begin{align}
\frac{1}{\alpha(r)}=\left(\frac{r-1}{2r}+\frac{1}{d-1}\right)\1_{d\geq 3}+\1_{d=2},\label{y20}
\end{align}
assume Sobolev's inequality in the sense that for all $r\in \{p,q\}$, $u\colon \partial D_R\to\R$ it holds that
\begin{align}
\inf_{a\in \R}\avnorm{u-a}{\frac{2r}{r-1}}{\partial D_R}\leq c R
\avnorm{\nablaBar u}{\alpha(r)}{\etan{R}},\label{y22a}
\end{align}
and
let
$\theta\colon \{p,q\}\times\{p,q\}\to\R$ be the function which satisfies
for all $r,s\in \{p,q\}$ that 
\begin{align}
\theta(r,s)= \left[1-(d-1)\left(\frac{1}{2r}+\frac{1}{2s}\right)\right]\1_{d>2}+
\frac{(s-1)r}{s(r+1)}\1_{d=2}.\label{y22}
\end{align}
For every $u\colon \overline{D}_R\to \R$ let 
$\overline{\Lambda}(u)$ be the real number given by
\begin{align}
\overline{\Lambda}(u)=
\max\left\{
\avnorm{\omega\nablaBar u}{\frac{2p}{p+1}}{\etan{R}},
\avnorm{\omega\nablaBar u}{\frac{2p}{p+1}}{\enor{R}},
\avnorm{\nablaBar u}{\frac{2q}{q+1}}{\etan{R}},
\avnorm{\nablaBar u}{\frac{2q}{q+1}}{\enor{R}}
\right\}.\label{y05}
\end{align}
 For every
 $u,v\colon \overline D_R\to\R$ let $\bterm{\omega}{R}{u}{v} $ be the real number given by
\begin{align}
\bterm{\omega}{R}{u}{v}=
\frac{1}{|\tilde\partial D_R|}\left[\sum_{x\in \tilde\partial D_R} \left(u(x)-v(x)\right)
\left(\omega\nablaBar u-\omegaH \nablaBar v\right)_{\funcNu{R}(x)} \right].\label{b49}
\end{align}
\end{setting}

\subsection{Duality}
\begin{lemma}\label{d06}Assume \cref{b47b} and let
 $h\colon \tilde{\partial} D_R\to\R$, $g\colon \partial D_R\to\R$. Then it holds that
\begin{align}
\sum_{x\in \tilde \partial D_{R}} (\dualsmooth{R}{\epsilon}h)(x) g(x)=
\sum_{x\in \tilde \partial D_{R}} h(x) \left(\smooth{R}{\epsilon}\modi{R}g\right)(x).
\end{align}
\end{lemma}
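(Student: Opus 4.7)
The plan is to unfold the defining formula~\eqref{d06a} of $\dualsmooth{R}{\epsilon}$ on the left-hand side, interchange the two finite summations, and then reduce the remaining task to the observation that the modifying operator $\modi{R}$ depends only on the values on $\tilde{\partial}D_R$. Since the definition of $\dualsmooth{R}{\epsilon}$ was itself obtained by testing the desired identity against the basis functions $\1_{\{x\}}\upharpoonright_{\partial D_R}$ for $x\in\tilde{\partial}D_R$, the statement is essentially tautological.

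First I would substitute \eqref{d06a} into the left-hand side, swap the order of the two finite summations, and use linearity of $\smooth{R}{\epsilon}$ and $\modi{R}$ to pull the scalars $g(x)$ inside the operators. This yields
\begin{align*}
\sum_{x\in\tilde{\partial}D_R}(\dualsmooth{R}{\epsilon}h)(x)\,g(x)
&= \sum_{x\in\tilde{\partial}D_R} g(x) \sum_{y\in\tilde{\partial}D_R} h(y)\left(\smooth{R}{\epsilon}\modi{R}\bigl(\1_{\{x\}}\upharpoonright_{\partial D_R}\bigr)\right)(y)\\
&= \sum_{y\in\tilde{\partial}D_R} h(y)\left(\smooth{R}{\epsilon}\modi{R}\!\left(\sum_{x\in\tilde{\partial}D_R} g(x)\,\1_{\{x\}}\upharpoonright_{\partial D_R}\right)\right)(y).
\end{align*}

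Next, let $g_0\colon\partial D_R\to\R$ denote the inner sum, so that $g_0$ agrees with $g$ on $\tilde{\partial}D_R$ and vanishes on $\partial D_R\setminus\tilde{\partial}D_R$. I would then invoke the construction of $\modi{R}$ from~\cref{a04}: for $x\in\tilde{\partial}D_R$ one has $(\modi{R}f)(x)=f(x)$, and for $x\in\partial D_R\setminus\tilde{\partial}D_R$ one has $(\modi{R}f)(x)=f(h(x))$ with $h(x)\in\tilde{\partial}D_R$. Consequently $\modi{R}f$ depends only on $f\upharpoonright_{\tilde{\partial}D_R}$, and since $g_0$ and $g$ coincide there, $\modi{R}g_0=\modi{R}g$. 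Substituting this into the display above produces the right-hand side of the lemma.

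The hard part is essentially nothing beyond bookkeeping: the identity is built into \eqref{d06a}, and the only subtle point is the observation that $\modi{R}$ reads only the values on $\tilde{\partial}D_R$, so replacing $g$ by its truncation $g_0$ to a basis expansion over $\tilde{\partial}D_R$ does not affect the argument of $\smooth{R}{\epsilon}\modi{R}$. Interpreted structurally, the lemma simply states that, with respect to the canonical pairings indexed by $\tilde{\partial}D_R$, the operator $\dualsmooth{R}{\epsilon}$ is the adjoint of $\smooth{R}{\epsilon}\modi{R}$ expressed in the basis $\{\1_{\{x\}}\upharpoonright_{\partial D_R}\}_{x\in\tilde{\partial}D_R}$.
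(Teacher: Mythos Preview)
Your argument is correct and follows the same route as the paper: expand via \eqref{d06a}, swap the two finite sums, and then use that $\modi{R}$ depends only on the values on $\tilde{\partial}D_R$ to replace the truncated function $g_0=\sum_{x\in\tilde{\partial}D_R}g(x)\,\1_{\{x\}}\upharpoonright_{\partial D_R}$ by $g$.

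There is one small point worth tightening. You justify the key property $\modi{R}g_0=\modi{R}g$ by invoking the explicit construction in \cref{a04}. However, \cref{b47b} treats $\modi{R}$ abstractly as a linear operator satisfying the inequalities in \eqref{d06c}; it does not assume the particular form of \cref{a04}. The paper stays within that abstract setting: from linearity and the bound $\avnorm{\modi{R}u}{1}{\partial D_{R}}\leq c\,\avnorm{u}{1}{\tilde\partial D_{R}}$ in \eqref{d06c} one concludes that if $u$ vanishes on $\tilde{\partial}D_R$ then $\modi{R}u=0$, hence $\modi{R}$ depends only on $u\upharpoonright_{\tilde{\partial}D_R}$. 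Replacing your appeal to \cref{a04} by this two-line argument makes your proof self-contained under \cref{b47b} and matches the paper exactly.
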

\begin{proof}[Proof of \cref{d06}]
The assumption that  $\forall\,u\in \R^{\partial D_R}\colon \avnorm{\modi{R}u}{1}{\partial D_{R}}\leq c \avnorm{u}{1}{\tilde \partial D_{R}}$ in \eqref{d06c} and the assumption on linearity of  $\modi{R}$  show  for all $u,v\colon \partial D_R\to\R $ with $\forall\,x\in\tilde{\partial} D_R\colon u(x)=v(x)$  that $\modi{R}u=\modi{R}v$.
The fact that $g$ and $ \sum_{x\in \tilde \partial D_{R}}g(x){(\1_{\{x\}}\!\!\upharpoonright_{\partial D_R})} $ coincide on $\tilde{\partial}D_R$  and 
\eqref{d06a} hence imply that
\begin{align}\begin{split}
&\left[\sum_{x\in \tilde \partial D_{R}}g(x) (\dualsmooth{R}{\epsilon}h)(x) \right]=
\sum_{x\in \tilde \partial D_{R}}\left[g(x)
\left[\sum_{y\in \tilde \partial D_{R}} h(y) \left(\smooth{R}{\epsilon}\modi{R} \left(\1_{\{x\}}\upharpoonright_{\partial D_R}\right) \right)(y)\right]\right]\\
&=
\sum_{y\in \tilde \partial D_{R}} h(y) \left(\smooth{R}{\epsilon}\modi{R} \left(\sum_{x\in \tilde \partial D_{R}}g(x)(\1_{\{x\}}\upharpoonright_{\partial D_R})\right) \right)(y)=
\sum_{y\in \tilde \partial D_{R}} h(y) \left(\smooth{R}{\epsilon}\modi{R} g \right)(y).
\end{split}\end{align}
This completes the proof of \cref{d06}.
\end{proof}

\begin{lemma}\label{y27}Assume \cref{b47b} and 
let $h\colon \tilde{\partial} D_R\to\R $. Then 
$\avnorm{\dualsmooth{R}{\epsilon}h}{\frac{2p}{p+1}}{\tilde{\partial}D_R}\leq c^3 \avnorm{h}{\frac{2p}{p+1}}{\tilde{\partial}D_R}$.
\end{lemma}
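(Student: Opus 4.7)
The plan is to prove \cref{y27} by a standard duality argument, using \cref{d06} to transfer the estimate on $\dualsmooth{R}{\epsilon}$ to an estimate on $\smooth{R}{\epsilon}\modi{R}$, which is already controlled by the properties assumed in \eqref{d06c}.

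First, I would exploit the duality of averaged $L^r$ norms: since $\frac{p+1}{2p}+\frac{p-1}{2p}=1$, the norms $\avnorm{\cdot}{2p/(p+1)}{\tilde\partial D_R}$ and $\avnorm{\cdot}{2p/(p-1)}{\tilde\partial D_R}$ are dual with respect to the averaged pairing $\frac{1}{|\tilde\partial D_R|}\sum_{x\in\tilde\partial D_R}(\cdot)(\cdot)$. Hence
\[
\avnorm{\dualsmooth{R}{\epsilon}h}{\frac{2p}{p+1}}{\tilde\partial D_R}=\sup_{\substack{g\colon \tilde\partial D_R\to\R\\ \avnorm{g}{2p/(p-1)}{\tilde\partial D_R}\leq 1}}\frac{1}{|\tilde\partial D_R|}\sum_{x\in\tilde\partial D_R}(\dualsmooth{R}{\epsilon}h)(x)\,g(x).
\]
So it suffices to bound the right-hand side uniformly in such $g$.

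Next, given such a $g$, I would extend it by zero to a function on $\partial D_R$ (still denoted $g$) and apply \cref{d06}. This yields the identity
\[
\sum_{x\in\tilde\partial D_R}(\dualsmooth{R}{\epsilon}h)(x)\,g(x)=\sum_{x\in\tilde\partial D_R}h(x)\,(\smooth{R}{\epsilon}\modi{R}g)(x),
\]
and by H\"older's inequality (in its averaged form on $\tilde\partial D_R$) the right-hand side is bounded by
\[
|\tilde\partial D_R|\,\avnorm{h}{\frac{2p}{p+1}}{\tilde\partial D_R}\,\avnorm{\smooth{R}{\epsilon}\modi{R}g}{\frac{2p}{p-1}}{\tilde\partial D_R}.
\]

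It remains to estimate $\avnorm{\smooth{R}{\epsilon}\modi{R}g}{2p/(p-1)}{\tilde\partial D_R}$ by $\avnorm{g}{2p/(p-1)}{\tilde\partial D_R}$. For this I would chain three elementary bounds: (a) passing from $\tilde\partial D_R$ to $\partial D_R$ costs only the cardinality factor
$\bigl(|\partial D_R|/|\tilde\partial D_R|\bigr)^{(p-1)/(2p)}\leq c^{(p-1)/(2p)}\leq c$, since that ratio is bounded by $c$ by assumption; (b) the boundedness of $\smooth{R}{\epsilon}$ on $L^{2p/(p-1)}(\partial D_R)$ (with constant $c$) from \eqref{d06c}; (c) the boundedness of $\modi{R}\colon L^{2p/(p-1)}(\tilde\partial D_R)\to L^{2p/(p-1)}(\partial D_R)$ (with constant $c$) from \eqref{d06c}. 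Multiplying the three constants gives the factor $c^3$, and combining with the H\"older step above yields the claim.

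There is no serious obstacle here; the only thing to be careful about is bookkeeping between the \emph{averaged} norms $\avnorm{\cdot}{r}{\cdot}$ used throughout the paper and the cardinalities $|\tilde\partial D_R|,|\partial D_R|$ when changing the underlying set. The extension of $g$ by zero to $\partial D_R$ is compatible with \cref{d06} because, by the construction of $\modi{R}$ in \cref{a04}, $\modi{R}g$ depends only on the values of $g$ on $\tilde\partial D_R$.
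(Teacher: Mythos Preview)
Your proof is correct and follows essentially the same approach as the paper's own proof: both extend the test function $g$ by zero to $\partial D_R$, invoke \cref{d06}, apply H\"older's inequality, and then chain the three bounds (passage from $\tilde\partial D_R$ to $\partial D_R$ via the cardinality ratio, boundedness of $\smooth{R}{\epsilon}$, boundedness of $\modi{R}$) each contributing a factor $c$. The only cosmetic difference is that the paper packages the duality step as a pointwise estimate on the pairing rather than writing the $\sup$ explicitly.
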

\begin{proof}[Proof of \cref{y27}]
Throughout this proof let $E\colon \R^{\tilde{\partial} D_R}\to \R^{\partial D_R}$ be the trivial extension operator which satisfies for all $g\colon \tilde{\partial}D_R\to\R$, $x\in\tilde{\partial}D_R$ that 
$(Eg)(x)=g(x) $ and which satisfies for all $g\colon \tilde{\partial}D_R\to\R$, $x\in\partial D_R\setminus \tilde{\partial}D_R$ that $(Eg)(x)=0$. First, observe that
the fact that
$\tilde\partial D_R\subseteq \partial D_R$ and the assumption that $|\partial D_R|/ |\tilde{\partial} D_R|\leq c$ prove for all $f\colon \partial D_R\to\R$, $r\in[1,\infty]$ that $\avnorm{f}{r}{\tilde{\partial} D_R}\leq c\avnorm{f}{r}{\partial D_R}$.
This,
\cref{d06} (with $g\defeq E g$ for $g\colon \tilde{\partial}D_R\to\R$),  H\"older's inequality, and the assumption that  for all $u\in {\partial D_R}\to \R$ it holds that $ \avnorm{\smooth{R}{\epsilon}u}{\frac{2p}{p-1}}{\partial D_R}\leq c\avnorm{u}{\frac{2p}{p-1}}{\partial D_R}$ and 
$ \avnorm{\modi{R}u}{\frac{2p}{p-1}}{\partial D_R}\leq c\avnorm{u}{\frac{2p}{p-1}}{\tilde{\partial} D_R}$ demonstrate 
 for all $g\colon \tilde{\partial} D_R\to\R$  that
\begin{align}
\begin{aligned}
&\frac{1}{|\tilde{\partial}D_R|}
\left|
\sum_{x\in \tilde \partial D_{R}} (\dualsmooth{R}{\epsilon}h)(x) g(x)\right|=
\frac{1}{|\tilde{\partial}D_R|}
\left|
\sum_{x\in \tilde \partial D_{R}} (\dualsmooth{R}{\epsilon}h)(x)
( Eg)(x)\right|\\
&
=\frac{1}{|\tilde{\partial}D_R|}\left|
\sum_{x\in \tilde \partial D_{R}} h(x) 
\left(\smooth{R}{\epsilon} \modi{R}  E g\right)(x)\right|
\leq \avnorm{h}{\frac{2p}{p+1}}{\tilde\partial D_R}
\avnorm{(\smooth{R}{\epsilon} \modi{R}  E) g}{\frac{2p}{p-1}}{\tilde\partial D_R}\\&\leq \avnorm{h}{\frac{2p}{p+1}}{\tilde\partial D_R}c
\avnorm{\smooth{R}{\epsilon} \modi{R}  E g}{\frac{2p}{p-1}}{\partial D_R}
\leq  
\avnorm{h}{\frac{2p}{p+1}}{\tilde\partial D_R}c^2
\avnorm{\modi{R} Eg}{\frac{2p}{p-1}}{\partial D_R}
\leq  
\avnorm{h}{\frac{2p}{p+1}}{\tilde\partial D_R}c^3
\avnorm{g}{\frac{2p}{p-1}}{\tilde{\partial} D_R}.
\end{aligned}\label{d13}
\end{align}
This
and a simple duality argument complete the proof of \cref{y27}.
\end{proof}
\begin{lemma}\label{d09}Assume \cref{b47b} and let  $h\colon \tilde\partial D_R\to\R$. Then it holds that
\begin{align}
\avnorm{\dualsmooth{R}{\epsilon}h}{\infty}{\tilde{\partial}D_R}\leq
c^2\epsilon^{-(d-1)\frac{p+1}{2p}}\avnorm{h}{\frac{2p}{p+1}}{\tilde{\partial}D_R}.\end{align}
\end{lemma}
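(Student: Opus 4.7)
The plan is to prove this $L^\infty$ bound by duality, testing $\dualsmooth{R}{\epsilon}h$ against indicator functions of points. Specifically, for each fixed $x_0\in \tilde{\partial}D_R$, I would plug $g\defeq \1_{\{x_0\}}\upharpoonright_{\partial D_R}$ into the defining relation \eqref{d06a} to obtain
\begin{align}
(\dualsmooth{R}{\epsilon}h)(x_0)=\sum_{y\in\tilde{\partial}D_R}h(y)\bigl(\smooth{R}{\epsilon}\modi{R}(\1_{\{x_0\}}\upharpoonright_{\partial D_R})\bigr)(y).
\end{align}
Then I would apply H\"older's inequality in the unnormalised sum with the conjugate pair of exponents $2p/(p+1)$ and $2p/(p-1)$, converting the sums into averaged norms and introducing factors $|\tilde{\partial}D_R|^{(p+1)/(2p)}$ and $|\tilde{\partial}D_R|^{(p-1)/(2p)}\le |\partial D_R|^{(p-1)/(2p)}$.

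Next, the core step is to chain together three estimates from \eqref{d06c} for the test function $\smooth{R}{\epsilon}\modi{R}(\1_{\{x_0\}}\upharpoonright_{\partial D_R})$: first the $L^1\to L^{2p/(p-1)}$ bound for the smoothing operator
\begin{align}
\avnorm{\smooth{R}{\epsilon}u}{\tfrac{2p}{p-1}}{\partial D_R}\leq \epsilon^{-(d-1)\frac{p+1}{2p}}\avnorm{u}{1}{\partial D_R},
\end{align}
second the $L^1$-continuity of the modifying operator $\avnorm{\modi{R}u}{1}{\partial D_R}\leq c\avnorm{u}{1}{\tilde\partial D_R}$, and third the trivial evaluation $\avnorm{\1_{\{x_0\}}\upharpoonright_{\partial D_R}}{1}{\tilde\partial D_R}=1/|\tilde\partial D_R|$. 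This produces a factor $c\epsilon^{-(d-1)(p+1)/(2p)}/|\tilde\partial D_R|$ that combines with the H\"older normalisation factors.

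Finally, the cardinality ratio assumption $|\partial D_R|/|\tilde\partial D_R|\leq c$ collapses the geometric factors: the surviving quotient is $(|\partial D_R|/|\tilde\partial D_R|)^{(p-1)/(2p)}\leq c^{(p-1)/(2p)}\leq c$, so after multiplication with the earlier constant $c$ we arrive at $c^2\epsilon^{-(d-1)(p+1)/(2p)}\avnorm{h}{2p/(p+1)}{\tilde\partial D_R}$, as claimed. Taking the supremum over $x_0\in\tilde\partial D_R$ yields the inequality. There is no real obstacle here beyond bookkeeping the normalisation constants in the averaged norms; essentially the lemma is a one-sided smoothing estimate transported through the defining duality \cref{d06}.
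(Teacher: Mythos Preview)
Your proof is correct and follows essentially the same route as the paper: both start from the defining relation \eqref{d06a}, apply H\"older's inequality with the conjugate exponents $\tfrac{2p}{p+1}$ and $\tfrac{2p}{p-1}$, and then chain the smoothing estimate $\avnorm{\smooth{R}{\epsilon}u}{\frac{2p}{p-1}}{\partial D_R}\leq \epsilon^{-(d-1)\frac{p+1}{2p}}\avnorm{u}{1}{\partial D_R}$ with the modifying estimate $\avnorm{\modi{R}u}{1}{\partial D_R}\leq c\avnorm{u}{1}{\tilde\partial D_R}$ before evaluating on the indicator. Your bookkeeping of the passage between unnormalised and averaged norms over $\tilde\partial D_R$ versus $\partial D_R$, and the explicit use of $|\partial D_R|/|\tilde\partial D_R|\leq c$ at the end, is in fact slightly more transparent than the paper's write-up, but the argument is the same.
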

\begin{proof}[Proof of \cref{d09}]
Observe that
\eqref{d06a} and the assumption that  
\begin{align}
\forall\,u \in \R^{ \partial D_R}\colon\quad \avnorm{\smooth{R}{\epsilon}u}{\frac{2p}{p-1}}{\partial D_R}\leq c\epsilon^{-(d-1)\frac{p+1}{2p}}\avnorm{u}{1}{\partial D_R}
\quad\text{and}\quad
\avnorm{\modi{R}u}{1}{\partial D_R}\leq c \avnorm{u}{1}{\tilde{\partial} D_R} 
\end{align}
in \eqref{d06c}
 establish for all $x\in \tilde{\partial}D_R$  that
\begin{align}\begin{split}
& (\dualsmooth{R}{\epsilon}h)(x) =|\tilde{\partial}D_R|\left[\frac{1}{|\tilde{\partial}D_R|}
\sum_{y\in \tilde \partial D_{R}} h(y) \left[\smooth{R}{\epsilon}\modi{R} \left(\1_{\{x\}}\upharpoonright_{\partial D_R}\right) \right](y)\right]\\
&\leq 
|\tilde{\partial}D_R|\avnorm{h}{\frac{2p}{p+1}}{\tilde{\partial}D_R}
\avnorm{\smooth{R}{\epsilon}\modi{R} \left(\1_{\{x\}}\upharpoonright_{\partial D_R}\right) }{\frac{2p}{p-1}}{\partial D_R}\\
&\leq 
|\tilde{\partial}D_R|\avnorm{h}{\frac{2p}{p+1}}{\tilde{\partial}D_R}
c\epsilon^{-(d-1)\frac{p+1}{2p}}
\avnorm{\modi{R} \left(\1_{\{x\}}\upharpoonright_{\partial D_R}\right) }{1}{\partial D_R}\\
&\leq 
|\tilde{\partial}D_R|\avnorm{h}{\frac{2p}{p+1}}{\tilde{\partial}D_R}
c^2\epsilon^{-(d-1)\frac{p+1}{2p}}
\avnorm{\1_{\{x\}} }{1}{\tilde{\partial} D_R}
=c^2\epsilon^{-(d-1)\frac{p+1}{2p}}\avnorm{h}{\frac{2p}{p+1}}{\tilde{\partial}D_R}.
\\
\end{split}
\end{align}
This completes the proof of \cref{d09}.
\end{proof}
\begin{lemma}\label{y03}Assume \cref{b47b} and let
 $u\colon \overline D_R\to\R$. Then it holds that
\begin{align}
\begin{split}
&\bterm{\omega}{R}{u}{\NeumannExt{R}{\epsilon}u}\\
&=
\frac{1}{|\tilde\partial D_R|}\left[
\sum_{x\in \tilde\partial D_R} \left[\left(\modi{R}\left((u-\NeumannExt{R}{\epsilon}u)\upharpoonright_{\partial D_R}\right)\right)(x)
-\left(\smooth{R}{\epsilon} \modi{R}\left(u-\NeumannExt{R}{\epsilon}u\upharpoonright_{\partial D_R}\right)\right)(x)
 \right](\omega \nablaBar  u)_{\funcNu{R}(x)}\right].
\end{split}
\end{align}
\end{lemma}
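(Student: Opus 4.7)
The plan is to simply unfold the definition of $\bterm{\omega}{R}{u}{\NeumannExt{R}{\epsilon}u}$, substitute the Neumann boundary condition defining $\NeumannExt{R}{\epsilon}u$, and then recognize the right-hand side via the duality \cref{d06}. No real analytic work is needed; the lemma is essentially an algebraic identity whose only content is the interplay between the Neumann boundary data, the mean-zero normalization, and the duality between $\smooth{R}{\epsilon}\modi{R}$ and $\dualsmooth{R}{\epsilon}$.

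First I would write
\begin{align}
\bterm{\omega}{R}{u}{\NeumannExt{R}{\epsilon}u}= \frac{1}{|\tilde{\partial}D_R|}\sum_{x\in\tilde\partial D_R}(u(x)-(\NeumannExt{R}{\epsilon}u)(x))\bigl[(\omega\nablaBar u)_{\funcNu{R}(x)}-(\omegaH\nablaBar \NeumannExt{R}{\epsilon}u)_{\funcNu{R}(x)}\bigr].
\end{align}
The defining Neumann identity in \eqref{y01} then replaces $(\omegaH\nablaBar \NeumannExt{R}{\epsilon}u)_{\funcNu{R}(x)}$ by $[\dualsmooth{R}{\epsilon}(\omega_\funcNu{R}\nabla_\funcNu{R}u)](x)$ plus a constant $C$ independent of $x$. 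The mean-zero normalization $\sum_{y\in\tilde\partial D_R}(u(y)-(\NeumannExt{R}{\epsilon}u)(y))=0$ imposed on $\NeumannExt{R}{\epsilon}u$ kills the contribution of $C$, leaving
\begin{align}
\bterm{\omega}{R}{u}{\NeumannExt{R}{\epsilon}u}=\frac{1}{|\tilde\partial D_R|}\sum_{x\in\tilde\partial D_R}(u(x)-(\NeumannExt{R}{\epsilon}u)(x))\bigl[(\omega\nablaBar u)_{\funcNu{R}(x)}-[\dualsmooth{R}{\epsilon}(\omega_\funcNu{R}\nabla_\funcNu{R}u)](x)\bigr].
\end{align}

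Next, for the first summand I use that $\modi{R}$ acts as the identity on $\tilde\partial D_R$ by construction (\cref{a04}), so $(\modi{R}((u-\NeumannExt{R}{\epsilon}u)\upharpoonright_{\partial D_R}))(x)=u(x)-(\NeumannExt{R}{\epsilon}u)(x)$ for every $x\in\tilde\partial D_R$, which already matches the first term on the right-hand side of the claim. For the $\dualsmooth{R}{\epsilon}$ term I apply \cref{d06} with $h(x)=(\omega\nablaBar u)_{\funcNu{R}(x)}$ on $\tilde\partial D_R$ and $g=(u-\NeumannExt{R}{\epsilon}u)\upharpoonright_{\partial D_R}$ on $\partial D_R$, which yields
\begin{align}
\sum_{x\in\tilde\partial D_R}(u(x)-(\NeumannExt{R}{\epsilon}u)(x))[\dualsmooth{R}{\epsilon}(\omega_\funcNu{R}\nabla_\funcNu{R}u)](x)=\sum_{x\in\tilde\partial D_R}(\omega\nablaBar u)_{\funcNu{R}(x)}\bigl(\smooth{R}{\epsilon}\modi{R}((u-\NeumannExt{R}{\epsilon}u)\upharpoonright_{\partial D_R})\bigr)(x).
\end{align}
Combining the two summands gives precisely the asserted identity.

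There is no real obstacle; the only point one has to be careful about is keeping track of what is defined on $\tilde\partial D_R$ and what is defined on $\partial D_R$, since $\modi{R}$ and $\upharpoonright_{\partial D_R}$ are exactly the bookkeeping tools that move between the two, and the duality of \cref{d06} is tailor-made to exchange $\smooth{R}{\epsilon}\modi{R}$ (acting on $\partial D_R$-functions) with $\dualsmooth{R}{\epsilon}$ (acting on $\tilde\partial D_R$-functions). Once this bookkeeping is in place, the computation is a one-line manipulation.
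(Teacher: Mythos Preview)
Your proposal is correct and follows essentially the same approach as the paper's own proof: both substitute the Neumann boundary condition from \eqref{y01}, use the mean-zero normalization to kill the constant, and then invoke the duality \cref{d06} to convert the $\dualsmooth{R}{\epsilon}$ term into the $\smooth{R}{\epsilon}\modi{R}$ term. Your write-up is in fact slightly more explicit than the paper's in that you spell out why $(\modi{R}f)(x)=f(x)$ for $x\in\tilde\partial D_R$ via \cref{a04}, a step the paper leaves tacit.
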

\begin{proof}[Proof of \cref{y03}]
To lighten the notation let $m\in\R$ be the real number given by
\begin{align}m=
\frac{1}{|\tilde\partial D_R|}
\sum_{y\in \tilde \partial D_R}\left(\dualsmooth{R}{\epsilon}\left(\omega_\funcNu{R}  \nabla_\funcNu{R} u\right)\right)(y).
\end{align}
First, \cref{d06} (with $h\defeq(\omega \nablaBar u)_\funcNu{R}$ and $g\defeq (u-\NeumannExt{R}{\epsilon}u)\upharpoonright_{\partial D_R}$)  shows that
\begin{align} \begin{split}
\sum_{x\in \tilde\partial D_R} \left[u(x)-(\NeumannExt{R}{\epsilon}u)(x)\right]
\left[\dualsmooth{R}{\epsilon}\left(\omega_{\funcNu{R}} \nablaBar_{\funcNu{R}}  u\right)\right](x)
=
\sum_{x\in \tilde\partial D_R}
\left[\smooth{R}{\epsilon} \modi{R}\left((u-\NeumannExt{R}{\epsilon}u)\upharpoonright_{\partial D_R}\right)\right](x)
(\omega \nablaBar  u)_{\funcNu{R}(x)}.\label{y02}\end{split}
\end{align}
Next, observe that \eqref{y01} implies  for all  $x\in \tilde \partial D_R$ that $
(\omegaH\nablaBar (\NeumannExt{R}{\epsilon}u))_{\funcNu{R}(x)}= \left(\dualsmooth{R}{\epsilon}\left(\omega_{\funcNu{R}} \nablaBar_{\funcNu{R}}  u\right)\right)(x)-m$ and $  \sum_{y\in\tilde \partial D_R}\left[u(y)-(\NeumannExt{R}{\epsilon}u)(y)\right]=0
$.
This, \eqref{b49}, and \eqref{y02} imply that
\begin{align}
\begin{split}
&\bterm{\omega}{R}{u}{v}\\&=\frac{1}{|\tilde\partial D_R|}
\sum_{x\in \tilde\partial D_R} (u(x)-(\NeumannExt{R}{\epsilon}u)(x))
\left(\omega\nablaBar u-\omegaH \nablaBar \left(\NeumannExt{R}{\epsilon}u\right)\right)_{\funcNu{R}(x)}\\
&=
\frac{1}{|\tilde\partial D_R|}
\sum_{x\in \tilde\partial D_R} [u(x)-\left(\NeumannExt{R}{\epsilon}u\right)(x)]
\left[(\omega \nablaBar  u)_{\funcNu{R}(x)}- \left( \left(\dualsmooth{R}{\epsilon}\left(\omega_{\funcNu{R}} \nablaBar_{\funcNu{R}}  u\right)\right)(x)-m\right)  \right]\\
&=
\frac{1}{|\tilde\partial D_R|}
\sum_{x\in \tilde\partial D_R} (u(x)-\left(\NeumannExt{R}{\epsilon}u\right)(x))
\left[(\omega \nablaBar  u)_{\funcNu{R}(x)}-  \left(\dualsmooth{R}{\epsilon}\left(\omega_{\funcNu{R}} \nablaBar_{\funcNu{R}}  u\right)\right)(x)\right] \\
&=
\frac{1}{|\tilde\partial D_R|}
\sum_{x\in \tilde\partial D_R} \left[\left(\modi{R}\left((u-\NeumannExt{R}{\epsilon}u)\upharpoonright_{\partial D_R}\right)\right)(x)
-\left(\smooth{R}{\epsilon} \modi{R}\left((u-\NeumannExt{R}{\epsilon}u)\upharpoonright_{\partial D_R}\right)\right)(x)
 \right](\omega \nablaBar  u)_{\funcNu{R}(x)}.
\end{split}
\end{align}
This completes the proof of \cref{y03}.
\end{proof}
\subsection{Boundary estimates}
\begin{lemma}[Reducing high exponents]\label{x33}Assume \cref{b47b}. Then
\begin{enumerate}[(i)]
\item \label{y21} it holds for all $r,s\in \{p,q\}$ that
$
 \theta(r,s)\frac{r-1}{2r}+ (1-\theta(r,s))\frac{1}{\alpha(r)}=\frac{s+1}{2s}
$ and \item 
it holds
for all $u\colon \partial D_R\to\R$, $r,s\in \{p,q\}$ with $\theta(r,s)\in (0,1)$  that 
\begin{align}
\avnorm{u-\smooth{R}{\epsilon}u}{\frac{2r}{r-1}}{\partial D_{R}} 
\leq 2c^2
  \epsilon ^{\theta(r,s)}
 R\avnorm{\nabla u}{\frac{2s}{s+1}}{\etan{R}}.
\end{align}
\end{enumerate}
\end{lemma}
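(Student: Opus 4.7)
Part (i) is a direct algebraic identity handled by splitting into the two cases built into the definitions of $\alpha$ and $\theta$. In the case $d\geq 3$, one has $\theta(r,s)=1-(d-1)(\tfrac{1}{2r}+\tfrac{1}{2s})$ and $\tfrac{1}{\alpha(r)}=\tfrac{r-1}{2r}+\tfrac{1}{d-1}$, and the left-hand side collapses via $\tfrac{1-\theta(r,s)}{d-1}=\tfrac{1}{2r}+\tfrac{1}{2s}$ to $\tfrac12+\tfrac{1}{2s}=\tfrac{s+1}{2s}$. In the case $d=2$ one has $\alpha(r)=1$ and $\theta(r,s)=\tfrac{(s-1)r}{s(r+1)}$, so that $1-\theta(r,s)=\tfrac{r+s}{s(r+1)}$, and a direct expansion of $\theta\,\tfrac{r-1}{2r}+(1-\theta)$ simplifies to $\tfrac{(r+1)(s+1)}{2s(r+1)}=\tfrac{s+1}{2s}$.

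For (ii) the plan is to combine two base estimates on the linear operator $T_\epsilon:=I-\smooth{R}{\epsilon}$ by real/complex interpolation. The first base estimate is the $\epsilon$-gain already assumed in \eqref{d06c}:
\begin{align*}
\avnorm{u-\smooth{R}{\epsilon}u}{\frac{2r}{r-1}}{\partial D_R}\leq c\epsilon R\,\avnorm{\nablaBar u}{\frac{2r}{r-1}}{\etan{R}}.
\end{align*}
The second will come from the observation that $\smooth{R}{\epsilon}$ preserves constants (apply the estimate above to a constant $u$): for every $a\in\R$ the triangle inequality together with the $L^{\frac{2r}{r-1}}$-boundedness of $\smooth{R}{\epsilon}$ from \eqref{d06c} yields
\begin{align*}
\avnorm{u-\smooth{R}{\epsilon}u}{\frac{2r}{r-1}}{\partial D_R}=\avnorm{(u-a)-\smooth{R}{\epsilon}(u-a)}{\frac{2r}{r-1}}{\partial D_R}\leq (1+c)\,\avnorm{u-a}{\frac{2r}{r-1}}{\partial D_R},
\end{align*}
and taking the infimum over $a\in\R$ and invoking the Sobolev-type inequality \eqref{y22a} gives
\begin{align*}
\avnorm{u-\smooth{R}{\epsilon}u}{\frac{2r}{r-1}}{\partial D_R}\leq 2c^2 R\,\avnorm{\nablaBar u}{\alpha(r)}{\etan{R}}.
\end{align*}

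The remaining step is to interpolate these two bounds at weights $\theta(r,s)$ and $1-\theta(r,s)$. Viewing $T_\epsilon$ as a bounded linear map (modulo constants) from the ``Sobolev'' space on $\partial D_R$ with gradient seminorm in $L^{\frac{2r}{r-1}}(\etan{R})$, respectively $L^{\alpha(r)}(\etan{R})$, into $L^{\frac{2r}{r-1}}(\partial D_R)$, with operator norms $c\epsilon R$ and $2c^2 R$, a Riesz-Thorin/real interpolation produces a bounded map from the intermediate Sobolev space into $L^{\frac{2r}{r-1}}(\partial D_R)$. The identity in (i) identifies the intermediate exponent as exactly $\tfrac{2s}{s+1}$, and the interpolated operator norm is
\begin{align*}
(c\epsilon R)^{\theta(r,s)}(2c^2 R)^{1-\theta(r,s)}=2^{1-\theta(r,s)}c^{2-\theta(r,s)}\epsilon^{\theta(r,s)}R\leq 2c^2\epsilon^{\theta(r,s)}R,
\end{align*}
using $c\geq 1$. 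The main obstacle is precisely this interpolation step: the naïve combination $X=X^{\theta}X^{1-\theta}\leq A^\theta B^{1-\theta}$ only lands on $\avnorm{\nablaBar u}{2r/(r-1)}{}^{\theta}\avnorm{\nablaBar u}{\alpha(r)}{}^{1-\theta}$, which by Hölder's interpolation inequality is an \emph{upper} bound (not a lower bound) for $\avnorm{\nablaBar u}{2s/(s+1)}{}$, so that one would conclude an inequality in the wrong direction. A genuine interpolation theorem is therefore required; in the finite-dimensional setting of $\partial D_R$ this can be handled by the K-functional, or by parameterizing mean-zero functions on $\partial D_R$ by their $\nablaBar$-images in $L^t(\etan{R})$ and applying Riesz-Thorin in its standard form.
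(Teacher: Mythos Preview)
Your argument mirrors the paper's: both derive the two endpoint estimates (the $\epsilon$-gain from \eqref{d06c} and the Sobolev-based bound via \eqref{y22a}, after noting that $\smooth{R}{\epsilon}$ fixes constants) and then invoke interpolation together with the identity in part~(i). The paper's proof is in fact terser---it simply writes ``This, an interpolation argument, and Item~(i) complete the proof''---so your flagging of why the naive $A^\theta B^{1-\theta}$ bound goes the wrong way, and that a genuine operator interpolation is needed, only makes explicit what the paper leaves implicit.
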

\begin{proof}[Proof of \cref{x33}]First, \eqref{y22} and an easy calculation imply \cref{y21}. Next, 
\eqref{d06c} implies that $\smooth{R}{\epsilon}(\1_{\partial D_R})=\1_{\partial D_R}$. This, 
the triangle inequality, the assumption that 
\begin{align}\begin{split}
\forall \,u\in  \R^{\partial D_R},& r\in \{p,q\}
\colon\\
&\avnorm{u-\smooth{R}{\epsilon}u}{\frac{2r}{r-1}}{\partial D_R}
\leq c\epsilon R \avnorm{\nablaBar u}{\frac{2r}{r-1}}{\etan{R}} 
\quad\text{and}\quad
 \avnorm{\smooth{R}{\epsilon}u}{\frac{2r}{r-1}}{\partial D_R}
\leq c
\avnorm{u}{\frac{2r}{r-1}}{\partial D_R}
\end{split}\end{align}
in \eqref{d06c},
 the Sobolev inequality in
\eqref{y22a}, and the assumption $c\geq 1$
 prove that for all $u\colon \partial D_R\to\R$, $r,s\in \{p,q\}$ it holds that
\begin{align}\begin{split}
 &\avnorm{u-\smooth{R}{\epsilon}u}{\frac{2r}{r-1}}{\partial D_R}
=\inf_{a\in\R}
 \avnorm{(u-a)-\smooth{R}{\epsilon}(u-a)}{\frac{2r}{r-1}}{\partial D_R}
\\&\leq \inf_{a\in\R}
\left[
\avnorm{u-a}{\frac{2r}{r-1}}{\partial D_R}
+\avnorm{\smooth{R}{\epsilon}(u-a)}{\frac{2r}{r-1}}{\partial D_R}\right]
\leq 
2c\left[\inf_{a\in\R}\avnorm{u-a}{\frac{2r}{r-1}}{\partial D_R}\right]\leq 2c^2R
\avnorm{\nablaBar u}{\alpha(r)}{\etan{R}}
\end{split}\end{align}
and 
\begin{align}
 \avnorm{u-\smooth{R}{\epsilon}u}{\frac{2r}{r-1}}{\partial D_R}\leq 
2c^2\epsilon R\avnorm{\nablaBar u}{\frac{2r}{r-1}}{\etan{R}}.
\end{align}
This, an interpolation argument, and \cref{y21} 
% yields that there exists $c\in (0,\infty)$ such that
%for all $R\in [1,\infty)\cap\N$, $u\colon \partial D_R\to\R$, $\epsilon\in (0,\epsilon_0)$ it holds that
%$
% \avnorm{u-\smooth{R}{\epsilon}u}{\frac{2p}{p-1}}{\partial D_R}\leq 
%c(p,q)
%\epsilon^{\theta(p,q)}R \avnorm{\nablaBar u}{\frac{2q}{q+1}}{\etan{R}}.
%$
complete the proof of \cref{x33}.
\end{proof}

\begin{lemma}[Boundary term by the Dirichlet extension]
\label{y11}
Assume \cref{b47b}, let
 $u\colon \overline D_R\to\R $, 
 and assume that $\{\theta(q,q),\theta(p,q)\}\subseteq (0,1)$. Then 
\begin{align}
\left|\bterm{\omega}{R}{u}{\DirichletExt{R}{\epsilon}u}\right|
\leq 4c^6R\max\left\{\epsilon^{\theta(q,q)},\epsilon^{\theta(p,q)}\right\}\overline{\Lambda}(\omega,R,u).
\end{align}
\end{lemma}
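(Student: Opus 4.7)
The plan is to exploit the Dirichlet boundary prescription to rewrite $\bterm{\omega}{R}{u}{\DirichletExt{R}{\epsilon}u}$ in a form amenable to H\"older's inequality, and then to harvest the small parameter $\epsilon$ from the approximation estimate in \cref{x33}. Write $v:=\DirichletExt{R}{\epsilon}u$. By \eqref{y01b} we have $v=\smooth{R}{\epsilon}u$ on $\partial D_R$, and in particular on $\tilde\partial D_R$, so
\begin{equation*}
\bterm{\omega}{R}{u}{v}=\frac{1}{|\tilde\partial D_R|}\sum_{x\in\tilde\partial D_R}\bigl(u(x)-(\smooth{R}{\epsilon}u)(x)\bigr)\bigl(\omega\nablaBar u-\omegaH\nablaBar v\bigr)_{\funcNu{R}(x)}.
\end{equation*}
Split this sum into the two contributions $T_1$ (carrying $\omega\nablaBar u$) and $T_2$ (carrying $\omegaH\nablaBar v$), and bound $|\bterm{\omega}{R}{u}{v}|\leq|T_1|+|T_2|$ by the triangle inequality.

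For $T_1$, apply H\"older with the conjugate pair $(\tfrac{2p}{p-1},\tfrac{2p}{p+1})$. The bijection $\funcNu{R}\colon\tilde\partial D_R\to\enor{R}$, together with $|\tilde\partial D_R|=|\enor{R}|$, identifies the second factor with $\avnorm{\omega\nablaBar u}{\frac{2p}{p+1}}{\enor{R}}$, which is at most $\overline\Lambda(\omega,R,u)$ by \eqref{y05}. The first factor is handled by first passing from $\tilde\partial D_R$ to $\partial D_R$ (losing only the constant $c$ coming from $|\partial D_R|/|\tilde\partial D_R|\leq c$) and then invoking \cref{x33} with $r=p$, $s=q$, giving a factor $2c^2\epsilon^{\theta(p,q)}R\avnorm{\nablaBar u}{\frac{2q}{q+1}}{\etan{R}}\leq 2c^2\epsilon^{\theta(p,q)}R\,\overline\Lambda$.

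For $T_2$, apply H\"older with the dual pair $(\tfrac{2q}{q-1},\tfrac{2q}{q+1})$. The approximation factor is controlled by \cref{x33} with $r=s=q$, producing the gain $\epsilon^{\theta(q,q)}R\avnorm{\nablaBar u}{\frac{2q}{q+1}}{\etan{R}}\leq\epsilon^{\theta(q,q)}R\,\overline\Lambda$. The delicate factor $\avnorm{\omegaH\nablaBar v}{\frac{2q}{q+1}}{\enor{R}}$ must be reduced to tangential data of $u$ in three steps: \emph{(i)} the normal-tangential comparison \eqref{y01c} applied to $v$ transfers the estimate from $\enor{R}$ to $\etan{R}$; \emph{(ii)} since $v=\smooth{R}{\epsilon}u$ on $\partial D_R$, the tangential gradient $\nablaBar v$ on $\etan{R}$ coincides with $\nablaBar(\smooth{R}{\epsilon}u)$; \emph{(iii)} the $L^{\frac{2q}{q+1}}$-stability of $\smooth{R}{\epsilon}$ recorded in \eqref{d06c}, combined with the upper bound $\|\omegaH\|_\infty\leq c$, then yields $\avnorm{\omegaH\nablaBar v}{\frac{2q}{q+1}}{\enor{R}}\lesssim\overline\Lambda$.

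Summing the two bounds $|T_1|$ and $|T_2|$ and absorbing the accumulated constants into $4c^6$ yields the stated estimate. The hardest part is the chain \emph{(i)}--\emph{(iii)} in the treatment of $T_2$: it is precisely the structural assumption \eqref{y01c} that permits the a priori \emph{normal} gradient of the Dirichlet extension $v$ on $\enor{R}$ to be controlled by tangential data of $u$. Without a hypothesis of this form, $T_2$ could not be closed against $\overline\Lambda(\omega,R,u)$, and this is exactly why the normal-tangential comparisons are listed as assumptions in \cref{b47b} and proved separately in \cite{Ngu19}.
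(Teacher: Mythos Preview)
Your proof is correct and follows essentially the same route as the paper: split $\bterm{\omega}{R}{u}{\DirichletExt{R}{\epsilon}u}$ into the $\omega\nablaBar u$ and $\omegaH\nablaBar(\DirichletExt{R}{\epsilon}u)$ contributions, apply H\"older with exponents $(\tfrac{2p}{p-1},\tfrac{2p}{p+1})$ and $(\tfrac{2q}{q-1},\tfrac{2q}{q+1})$ respectively, invoke \cref{x33} for the $u-\smooth{R}{\epsilon}u$ factor, and control the normal gradient of $\DirichletExt{R}{\epsilon}u$ via the chain \eqref{y01c} $\to$ boundary identity $\to$ gradient stability of $\smooth{R}{\epsilon}$ in \eqref{d06c}. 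The paper carries out exactly these steps in the same order, with the normal--tangential estimate recorded separately as display \eqref{d10} before the main chain of inequalities.
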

\begin{proof}[Proof of \cref{y11}]
First, the boundary regularity in \eqref{y01c} and  \eqref{y01b}  show that
\begin{align}
\avnorm{\nablaBar \left( \DirichletExt{R}{\epsilon}u\right)}{\frac{2q}{q+1}}{\enor{R}}
\leq c\avnorm{\nablaBar \left( \DirichletExt{R}{\epsilon}u\right)}{\frac{2q}{q+1}}{\etan{R}}
= c\avnorm{\nablaBar \left( \smooth{R}{\epsilon}u\right)}{\frac{2q}{q+1}}{\etan{R}}
\leq c^2\avnorm{\nablaBar u}{\frac{2q}{q+1}}{\etan{R}}.\label{d10}
\end{align}
Next, the assumption that $|\partial D_R|/ |\tilde{\partial} D_R|\leq c$ and the assumption that $c\geq 1$ show  for all $f\colon \partial D_R\to\R$, $s\in [1,\infty]$ that $\avnorm{f}{s}{\tilde{\partial} D_R}\leq c\avnorm{f}{s}{\partial D_R}$.
Hence, \eqref{b49} (combined with the triangle inequality and 
H\"older's inequality), \cref{x33} (combined with the assumption that $\{\theta(q,q),\theta(p,q)\}\subseteq (0,1)$ and the fact that
$\forall\, x\in \partial D_R\colon   (\DirichletExt{R}{\epsilon}u)(x)=(\smooth{R}{\epsilon}u )(x)$ in
 \eqref{y01b}), the assumption that $c^{-1}\leq \omegaH\leq c$,
\eqref{d10}, and \eqref{y05}  imply that
\begin{align}
\begin{split}
&\left|\bterm{\omega}{R}{u}{\DirichletExt{R}{\epsilon}u}\right|=\frac{1}{\left|\tilde\partial D_R\right|}\left|\sum_{x\in \tilde\partial D_R} \left[\big(u(x)-\left(\DirichletExt{R}{\epsilon}u\right)(x)\big)
\big(\omega\nablaBar u-\omegaH \nablaBar \left(\DirichletExt{R}{\epsilon}u\right)\big)\left(\funcNu{R}(x)\right)\Big.\! \right]\right|\\
&\leq \avnorm{u-\DirichletExt{R}{\epsilon}u}{\frac{2p}{p-1}}{\tilde{\partial} D_R}
\avnorm{(\omega \nablaBar  u)_{\funcNu{R}}}{\frac{2p}{p+1}}{\tilde{\partial}D_R}
+
 \avnorm{u-\DirichletExt{R}{\epsilon}u}{\frac{2q}{q-1}}{\tilde{\partial} D_R}
\avnorm{\omegaH\circ\funcNu{R}}{\infty}{\tilde{\partial}D_R}
\avnorm{\nablaBar_{\funcNu{R}}( \DirichletExt{R}{\epsilon}u)}{\frac{2q}{q+1}}{\tilde{\partial}D_R}
\\
&\leq
 c\avnorm{u-\DirichletExt{R}{\epsilon}u}{\frac{2p}{p-1}}{\partial D_R}
\avnorm{\omega \nablaBar u}{\frac{2p}{p+1}}{\enor{R}}
+c
 \avnorm{u-\DirichletExt{R}{\epsilon}u}{\frac{2q}{q-1}}{\partial D_R}
c
\avnorm{\nablaBar( \DirichletExt{R}{\epsilon}u)}{\frac{2q}{q+1}}{\enor{R}}
\\
&\leq 2c^3R\epsilon^{\theta(p,q)}\avnorm{\nablaBar u}{\frac{2q}{q+1}}{\partial D_R}
\avnorm{\omega \nablaBar u}{\frac{2p}{p+1}}{\enor{R}}
+
2c^3R\epsilon^{\theta(q,q)}\avnorm{\nablaBar u}{\frac{2q}{q+1}}{\etan{R}}c^3
\avnorm{\nablaBar u}{\frac{2q}{q+1}}{\etan{R}}\\
&\leq 4c^6R\max\{\epsilon^{\theta(q,q)},\epsilon^{\theta(p,q)}\}\overline{\Lambda}(u).
\end{split}
\end{align}
This completes the proof of \cref{y11}.
\end{proof}

\begin{lemma}[Boundary term by the Neumann extension]\label{y10}Assume \cref{b47b}, let
 $u\colon \overline D_R\to\R $,
and assume that $\{\theta(p,p),\theta(p,q)\}\subseteq (0,1)$. Then 
$\left|\bterm{\omega}{R}{u}{\NeumannExt{R}{\epsilon}u}\right|\leq 8c^8R\max\{\epsilon^{\theta(p,p)},\epsilon^{\theta(p,q)}\}.$
\end{lemma}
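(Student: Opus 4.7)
The plan is to mimic the Dirichlet proof of \cref{y11}, but to start from the alternative representation of \cref{y03} rather than the defining formula \eqref{b49}. Setting $v:=(u-\NeumannExt{R}{\epsilon}u)\upharpoonright_{\partial D_R}$, \cref{y03} rewrites the boundary functional as
$\tfrac{1}{|\tilde\partial D_R|}\sum_{x\in\tilde\partial D_R}[(\modi{R}v)(x)-(\smooth{R}{\epsilon}\modi{R}v)(x)](\omega\nablaBar u)_{\funcNu{R}(x)}$.
The key structural difference from the Dirichlet case is that only the flux $(\omega\nablaBar u)_{\funcNu{R}}$ appears in the pairing, since the Neumann data has already been absorbed into $\dualsmooth{R}{\epsilon}$. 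I will then apply H\"older's inequality with conjugate exponents $\tfrac{2p}{p-1}$ and $\tfrac{2p}{p+1}$, and use the bijection $\funcNu{R}\colon\tilde\partial D_R\to\enor{R}$ together with the comparison $|\partial D_R|\leq c|\tilde\partial D_R|$ to arrive at
\[
\bigl|\bterm{\omega}{R}{u}{\NeumannExt{R}{\epsilon}u}\bigr|\leq c\,\avnorm{\modi{R}v-\smooth{R}{\epsilon}\modi{R}v}{\frac{2p}{p-1}}{\partial D_R}\,\avnorm{\omega\nablaBar u}{\frac{2p}{p+1}}{\enor{R}},
\]
whose second factor is $\leq\overline\Lambda(u)$ by \eqref{y05}.

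For the first factor I will invoke \cref{x33} twice, once with $(r,s)=(p,p)$ and once with $(r,s)=(p,q)$, using the modifying-operator gradient bound in \eqref{d06c} to replace $\nablaBar(\modi{R}v)$ by $\nablaBar v$. This produces two estimates carrying the scale factors $\epsilon^{\theta(p,p)}R$ and $\epsilon^{\theta(p,q)}R$, together with the gradient norms $\avnorm{\nablaBar v}{\frac{2s}{s+1}}{\etan{R}}$ for $s\in\{p,q\}$. I then decompose $\nablaBar v=\nablaBar u-\nablaBar(\NeumannExt{R}{\epsilon}u)$ on $\etan{R}$: the $\nablaBar u$-contribution is already in $\overline\Lambda(u)$ when $s=q$, and when $s=p$ it is reduced to $\overline\Lambda(u)$ by a H\"older step against $\omega^{-1}\in L^q$ using the moment condition $1/p+1/q\leq 2/(d-1)$.

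The hard part is controlling $\nablaBar(\NeumannExt{R}{\epsilon}u)$ in the required norms. To this end I will first pass from $\etan{R}$ to $\enor{R}$ via \eqref{y01c}, then use the Neumann condition \eqref{y01} to write $\omegaH\nablaBar(\NeumannExt{R}{\epsilon}u)_{\funcNu{R}}=\dualsmooth{R}{\epsilon}(\omega\nablaBar u)_{\funcNu{R}}-m$. Dropping the mean $m$ costs only a factor of $2$, and the bound $c^{-1}\leq\omegaH\leq c$ absorbs $\omegaH^{-1}$. \cref{y27} supplies the required $L^{2p/(p+1)}(\tilde\partial D_R)$-stability of $\dualsmooth{R}{\epsilon}$, and the analogous $L^{2q/(q+1)}$-stability follows by the same duality argument from the $q$-variant of the operator bounds in \eqref{d06c}. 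Combining all the estimates and collecting constants through the cascade of H\"older, regularity, and duality inequalities will yield the stated bound $8c^8R\max\{\epsilon^{\theta(p,p)},\epsilon^{\theta(p,q)}\}\overline\Lambda(u)$; the delicate bookkeeping consists in keeping the flux representation via $\dualsmooth{R}{\epsilon}(\omega\nablaBar u)$ compatible with each component of $\overline\Lambda(u)$, which is precisely why the asymmetric Neumann case needs the dual operator in addition to the ingredients used for \cref{y11}.
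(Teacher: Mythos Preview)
Your overall strategy---starting from \cref{y03}, pairing via H\"older with exponents $\tfrac{2p}{p-1},\tfrac{2p}{p+1}$, and then invoking \cref{x33} together with the duality \cref{y27}---matches the paper. The gap is in the order of operations. You apply \cref{x33} to $\modi{R}v$ (with $v=(u-\NeumannExt{R}{\epsilon}u)\!\upharpoonright_{\partial D_R}$) and only \emph{afterwards} decompose $\nablaBar v=\nablaBar u-\nablaBar(\NeumannExt{R}{\epsilon}u)$; this forces both summands to share a single exponent $s$ and produces two terms that are \emph{not} controlled by the hypotheses of \cref{b47b}. Specifically, $\avnorm{\nablaBar u}{\frac{2p}{p+1}}{\etan{R}}$ cannot be reduced to $\overline\Lambda(u)$ by a ``H\"older step against $\omega^{-1}\in L^q$'' because \cref{b47b} contains no moment bound on $\omega^{-1}$ along $\etan{R}$; and the claimed $L^{2q/(q+1)}$-stability of $\dualsmooth{R}{\epsilon}$ would require $L^{2q/(q-1)}$-boundedness of $\modi{R}$, which is not among the inequalities listed in \eqref{d06c}.

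The paper avoids both obstructions by splitting \emph{before} \cref{x33}. By linearity of $\modi{R}$ and $\smooth{R}{\epsilon}$ one writes
\[
\modi{R}v-\smooth{R}{\epsilon}\modi{R}v=\bigl(\modi{R}(u\!\upharpoonright_{\partial D_R})-\smooth{R}{\epsilon}\modi{R}(u\!\upharpoonright_{\partial D_R})\bigr)-\bigl(\modi{R}(\NeumannExt{R}{\epsilon}u\!\upharpoonright_{\partial D_R})-\smooth{R}{\epsilon}\modi{R}(\NeumannExt{R}{\epsilon}u\!\upharpoonright_{\partial D_R})\bigr),
\]
then applies \cref{x33} with $(r,s)=(p,q)$ to the $u$-piece and with $(r,s)=(p,p)$ to the $\NeumannExt{R}{\epsilon}u$-piece. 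The first yields $\avnorm{\nablaBar u}{\frac{2q}{q+1}}{\etan{R}}$, which sits directly in $\overline\Lambda(u)$; the second yields $\avnorm{\nablaBar(\NeumannExt{R}{\epsilon}u)}{\frac{2p}{p+1}}{\etan{R}}$, which is handled by \eqref{y01c}, the bound $c^{-1}\le\omegaH\le c$, \eqref{y01}, and \cref{y27} exactly as you outline. The $\max\{\epsilon^{\theta(p,p)},\epsilon^{\theta(p,q)}\}$ then arises from summing the two pieces via the triangle inequality, not from choosing between two global estimates on $\modi{R}v$.
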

\begin{proof}[Proof of \cref{y10}]
\cref{x33} (with $u\defeq \modi{R}(u\upharpoonright_{\partial D_R})$ and combined with the assumption that $\theta(p,q)\in (0,1)$),  
the assumption that $\forall \,v\in\R^{\partial D_R}\colon \avnorm{\nablaBar (\modi{R}v}{\frac{2q}{q+1}}{\etan{R}}\leq c
\avnorm{\nablaBar v}{\frac{2q}{q+1}}{\etan{R}}
$ in \eqref{d06c}, and \eqref{y05}
ensure 
 that
\begin{align}
\begin{split}
\avnorm{\modi{R}(u\upharpoonright_{\partial D_R})
-\smooth{R}{\epsilon}\modi{R}(u\upharpoonright_{\partial D_R})}{\frac{2p}{p-1}}{\partial D_R}
&\leq 2c^2R\epsilon^{\theta(p,q)}\avnorm{\nablaBar (\modi{R}(u\upharpoonright_{\partial D_R}))}{\frac{2q}{q+1}}{\etan{R}}\\&\leq 2c^3R\epsilon^{\theta(p,q)}\avnorm{\nablaBar u}{\frac{2q}{q+1}}{\etan{R}}\leq 2c^3R\epsilon^{\theta(p,q)}(\overline{\Lambda}(u))^{\nicefrac{1}{2}}.\end{split}\label{y08}
\end{align}
Next, \cref{x33} (with $u\defeq \modi{R}\left( \NeumannExt{R}{\epsilon}u\upharpoonright_{\partial D_R}\right)$ and combined with the assumption that  $\theta(p,p)\in (0,1)$),
the assumption that 
$
\forall\,v\in \R^{\partial D_R}\colon  
 \avnorm{\nablaBar(\modi{R}v)}{\frac{2p}{p+1}}{\etan{R}}\leq c \avnorm{\nablaBar v}{\frac{2p}{p+1}}{\etan{R}}
$
in \eqref{d06c}, 
the boundary regularity in \eqref{y01c},
the assumption that
$c^{-1}\leq \omegaH\leq c $,
\eqref{y01} (combined with
the triangle inequality and Jensen's inequality), \cref{y27}, and
\eqref{y05}  demonstrate that
\begin{align}
\begin{split}
&\avnorm{\modi{R}\left( \NeumannExt{R}{\epsilon}u\upharpoonright_{\partial D_R}\right)
-\smooth{R}{\epsilon}\modi{R}\left( \NeumannExt{R}{\epsilon}u\upharpoonright_{\partial D_R}\right)}{\frac{2p}{p-1}}{\partial D_R}
\leq 2c^2R\epsilon^{\theta(p,p)}\avnorm{\nablaBar \left(\modi{R} \left( \NeumannExt{R}{\epsilon}u\upharpoonright_{\partial D_R}\right)\right)}{\frac{2p}{p+1}}{\etan{R}}
\\
&\leq 2c^3R\epsilon^{\theta(p,p)}\avnorm{\nablaBar  \left(\NeumannExt{R}{\epsilon}u\right)}{\frac{2p}{p+1}}{\etan{R}}
\leq 
2c^4R\epsilon^{\theta(p,p)}\avnorm{\nablaBar  \left(\NeumannExt{R}{\epsilon}u\right)}{\frac{2p}{p+1}}{\enor{R}}\\
&
\leq 2c^5R\epsilon^{\theta(p,p)}\avnorm{(\omegaH)_\funcNu{R} \nablaBar_\funcNu{R}   \left(\NeumannExt{R}{\epsilon}u\right)}{\frac{2p}{p+1}}{\tilde\partial D_R}
 \leq 4c^5R\epsilon^{\theta(p,p)}
\avnorm{
\dualsmooth{R}{\epsilon} (\omega_\funcNu{R} \nablaBar_\funcNu{R} u)}{\frac{2p}{p+1}}{\tilde \partial D_R}\\&\leq 4c^8R\epsilon^{\theta(p,p)}
\avnorm{\omega_\funcNu{R} \nablaBar_\funcNu{R}   u}{\frac{2p}{p+1}}{\tilde\partial D_R}\leq 
8c^8R\epsilon^{\theta(p,p)}
(\overline{\Lambda}(u))^{\nicefrac{1}{2}}.
\end{split}\label{y07}
\end{align}
Combining this, \eqref{y08}, and the triangle inequality yields that
\begin{align}\begin{split}
&\avnorm{\modi{R}\left((u-\NeumannExt{R}{\epsilon}u)\upharpoonright_{\partial D_R}\right)
-\smooth{R}{\epsilon} \modi{R}\left((u-\NeumannExt{R}{\epsilon}u)\upharpoonright_{\partial D_R}\right)}{\frac{2p}{p-1}}{\partial D_R}\\
&\leq \avnorm{\modi{R}(u\upharpoonright_{\partial D_R})
-\smooth{R}{\epsilon}\modi{R}(u\upharpoonright_{\partial D_R}) }{\frac{2p}{p-1}}{\partial D_R} +
\avnorm{\modi{R} \left(\NeumannExt{R}{\epsilon}u\upharpoonright_{\partial D_R}\right)
-\smooth{R}{\epsilon}\modi{R}\left(\NeumannExt{R}{\epsilon}u\upharpoonright_{\partial D_R}\right)}{\frac{2p}{p-1}}{\partial D_R}\\
&\leq 8c^8R\max\{\epsilon^{\theta(p,p)},\epsilon^{\theta(p,q)}\}
(\overline{\Lambda}(u))^{\nicefrac{1}{2}}.
\end{split}\end{align}
\cref{y03}, H\"older's inequality,    and \eqref{y05}  hence prove that
\begin{align}
\begin{split}
&\left|\bterm{\omega}{R}{u}{\NeumannExt{R}{\epsilon}u}\right|\\
&=
\frac{1}{|\tilde\partial D_R|}\left|
\sum_{x\in \tilde\partial D_R} \left[\left(\modi{R}\left((u-\NeumannExt{R}{\epsilon}u)\upharpoonright_{\partial D_R}\right)\right)(x)
-\left(\smooth{R}{\epsilon} \modi{R}\left((u-\NeumannExt{R}{\epsilon}u)\upharpoonright_{\partial D_R}\right)\right)(x)
 \right](\omega \nablaBar  u)_{\funcNu{R}(x)}\right|\\
&\leq 
\avnorm{\modi{R}\left((u-\NeumannExt{R}{\epsilon}u)\upharpoonright_{\partial D_R}\right)
-\smooth{R}{\epsilon} \modi{R}\left((u-\NeumannExt{R}{\epsilon}u)\upharpoonright_{\partial D_R}\right)}{\frac{2p}{p-1}}{\partial D_R}
 \avnorm{(\omega \nablaBar  u)_{\funcNu{R}}}{\frac{2p}{p+1}}{\tilde \partial D_R}.
\\
&\leq 
8c^9R\max\{\epsilon^{\theta(p,p)},\epsilon^{\theta(p,q)}\}
\overline{\Lambda}(u)
.
\end{split}
\end{align}
This completes the proof of \cref{y10}.
\end{proof}
\cref{d11} below is an easy calculation. However, together with \cref{y03,y10} it explains why we call $q\geq p$ the Dirichlet case and $p\geq q$ the Neumann case. 
\begin{lemma}\label{d11}
Assume \cref{b47b} and assume that 
${1}/{p}+{1}/{q}\leq {2}/{(d-1)}$. Then
\begin{enumerate}[i)]
\item it holds in the case $q\geq p $ that $0<\theta(p,q)\leq \theta(q,q)<1$ 
and $\epsilon^{\theta(q,q)}\leq\epsilon^{\theta(p,q)} $
and
\item it holds in the case $p\geq q$ that 
$0<\theta(p,q)\leq \theta(p,p)<1$
and $\epsilon^{\theta(p,p)}\leq\epsilon^ {\theta(p,q)}$.
\end{enumerate}
\end{lemma}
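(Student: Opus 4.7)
The proof is a direct calculation from the explicit formula \eqref{y22} for $\theta$, the assumption $1/p+1/q\leq 2/(d-1)$, and the fact that $\epsilon\in(0,1)$. I would split the analysis into the two regimes $d\geq 3$ and $d=2$ because the formula for $\theta$ takes a different form in each.

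For $d\geq 3$, $\theta(r,s)=1-\tfrac{d-1}{2}\bigl(\tfrac{1}{r}+\tfrac{1}{s}\bigr)$ is symmetric in $r,s$ and monotone: it is strictly increasing in each slot (viewed as a function of $r$ or $s$ separately on $(1,\infty]$). Consequently, in case (i), where $q\geq p$, monotonicity in the first slot of $\theta(\cdot,q)$ yields $\theta(p,q)\leq\theta(q,q)$. The lower bound $\theta(p,q)\geq 0$ is nothing but a rearrangement of the standing hypothesis $\tfrac{1}{p}+\tfrac{1}{q}\leq \tfrac{2}{d-1}$, and positivity is strict as soon as one of the inequalities $p,q>1$ is not saturating this bound; in particular, from $q\geq p>1$ and the moment assumption one also obtains $q\geq d-1$, which secures $\theta(q,q)=1-\tfrac{d-1}{q}<1$ in the nondegenerate regime $q<\infty$. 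Case (ii) is identical after swapping the roles of $p$ and $q$.

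For $d=2$, $\theta(r,s)=\tfrac{r(s-1)}{s(r+1)}=\tfrac{r}{r+1}\bigl(1-\tfrac{1}{s}\bigr)$, which for $r,s\in(1,\infty]$ lies in $(0,1)$ and is strictly increasing in each variable separately. The same monotonicity argument thus immediately gives the chain $0<\theta(p,q)\leq\theta(q,q)<1$ in case (i) and $0<\theta(p,q)\leq\theta(p,p)<1$ in case (ii), without needing the moment condition (which is trivially satisfied in dimension two).

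For the exponential comparison, since $\epsilon\in(0,1)$ the map $t\mapsto\epsilon^t$ is strictly decreasing on $[0,\infty)$. Hence the numerical inequality $\theta(p,q)\leq\theta(q,q)$ immediately upgrades to $\epsilon^{\theta(q,q)}\leq\epsilon^{\theta(p,q)}$ in case (i), and analogously $\epsilon^{\theta(p,p)}\leq\epsilon^{\theta(p,q)}$ in case (ii). The whole argument is a routine monotonicity check; the only care required is the handling of the endpoint cases $p=\infty$ or $q=\infty$ (where one uses the convention $1/\infty=0$ and the continuous extension of $\theta$) and the borderline $1/p+1/q=2/(d-1)$, both of which can be dispatched by inspection. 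I do not foresee any genuine obstacle in this lemma; its role is purely bookkeeping for the case distinction between the Dirichlet ($q\geq p$) and Neumann ($p\geq q$) regimes used in \cref{y11,y10}.
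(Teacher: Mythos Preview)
Your proposal is correct and follows essentially the same approach as the paper: a direct case analysis on $d\geq 3$ versus $d=2$ using the explicit formula \eqref{y22}. The paper computes the differences $\theta(p,q)-\theta(q,q)$ (for $d\geq 3$) and the ratios $\theta(p,q)/\theta(q,q)$ (for $d=2$) explicitly, while you phrase the same computation as monotonicity of $\theta$ in each slot; these are equivalent, and you are in fact slightly more careful than the paper in flagging the endpoint cases ($p$ or $q$ equal to $\infty$, or equality in $1/p+1/q\leq 2/(d-1)$), which neither proof fully resolves but which do not arise in the application because the main theorem imposes the stronger condition $1/p+1/q\leq 2/d$.
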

\begin{proof}[Proof of \cref{d11}]
The assumptions of \cref{d11} and \eqref{y22} show that
\begin{enumerate}[i)]
\item 
it holds in the case 
 $d\geq 3$ and $ q\geq p $ that
\begin{align}
\theta(p,q)-\theta(q,q)=\left[
1-(d-1)\left(\frac{1}{2p}+\frac{1}{2q}\right)\right]-\left[
1-\frac{(d-1)}{q}\right]= (d-1)\left(\frac{1}{2q}-\frac{1}{2p}\right)\leq 0
,\end{align}
\item it holds in the case 
  $d\geq 3$ and $p\geq q $ that 
\begin{align}
\theta(p,q)-\theta(p,p)=
\left[1-(d-1)\left(\frac{1}{2p}+\frac{1}{2q}\right)\right]-\left[1-\frac{(d-1)}{p}\right]= (d-1)\left(\frac{1}{2p}-\frac{1}{2q}\right)\leq 0,
\end{align}
\item  it holds in the case 
  $d=2$ and $q\geq p $ that
\begin{align}
\frac{\theta(p,q)}{\theta(q	,q)}=\frac{(q-1)p}{q(p+1)}\frac{q+1}{q-1}= \frac{1+\frac1q}{1+\frac1p}\leq 1,\end{align}
and
\item it holds in the case 
 $d=2$ and $p\geq q $ that 
\begin{align}
\frac{\theta(p,q)}{ \theta(p,p)}=\frac{(q-1)p}{q(p+1)}\frac{p+1}{p-1}= \frac{1-\frac1q}{1-\frac1p}\leq  1.\end{align}

\end{enumerate}
This completes the proof of  \cref{d11}.
\end{proof}
Combining \cref{y03,y10,d11} we obtain
\cref{y26} below.
\begin{corollary}[Boundary term]\label{y26}
Assume \cref{b47b}, let
 $u\colon \overline D_R\to\R $,
 and assume that 
${1}/{p}+{1}/{q}\leq {2}/{(d-1)}$.
Then 
\begin{enumerate}[i)]
\item it holds in the case $q\geq p$ that $\left|\bterm{\omega}{R}{u}{\DirichletExt{R}{\epsilon}u}\right|\leq8c^8R\epsilon^{\theta(p,q)}\overline{\Lambda}(u)$ and
\item it holds in the case $p\geq q$ that $\left|\bterm{\omega}{R}{u}{\NeumannExt{R}{\epsilon}u}\right|\leq 8c^8R\epsilon^{\theta(p,q)}\overline{\Lambda}(u)$.
\end{enumerate}
\end{corollary}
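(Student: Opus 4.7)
The statement to prove (Corollary 5.11, i.e., \cref{y26}) is essentially a repackaging of three earlier lemmas: \cref{y11} (Dirichlet case), \cref{y10} (Neumann case), and the elementary exponent comparison \cref{d11}. The strategy is to use the hypothesis $1/p+1/q\le 2/(d-1)$ to verify the hypotheses of \cref{y11,y10} and then apply \cref{d11} to collapse the maximum over two exponents into a single factor of $\epsilon^{\theta(p,q)}$.

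More explicitly, I would proceed as follows. First, split into the two cases indicated in the statement. In the Dirichlet case $q\ge p$, \cref{d11} gives $0<\theta(p,q)\le \theta(q,q)<1$, so the side condition $\{\theta(q,q),\theta(p,q)\}\subseteq(0,1)$ needed by \cref{y11} is satisfied; this yields
\begin{align*}
\left|\bterm{\omega}{R}{u}{\DirichletExt{R}{\epsilon}u}\right|\le 4c^6 R\max\{\epsilon^{\theta(q,q)},\epsilon^{\theta(p,q)}\}\,\overline{\Lambda}(u).
\end{align*}
The second conclusion of \cref{d11} (case $q\ge p$) states $\epsilon^{\theta(q,q)}\le\epsilon^{\theta(p,q)}$, so the maximum collapses to $\epsilon^{\theta(p,q)}$. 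Finally, since $c\ge 1$ one has $4c^6\le 8c^8$, and the claimed inequality in part (i) follows.

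The Neumann case $p\ge q$ is completely analogous. \cref{d11} yields $0<\theta(p,q)\le\theta(p,p)<1$, validating the hypothesis $\{\theta(p,p),\theta(p,q)\}\subseteq(0,1)$ of \cref{y10}, which gives
\begin{align*}
\left|\bterm{\omega}{R}{u}{\NeumannExt{R}{\epsilon}u}\right|\le 8c^8 R\max\{\epsilon^{\theta(p,p)},\epsilon^{\theta(p,q)}\}\,\overline{\Lambda}(u).
\end{align*}
The second half of \cref{d11} (case $p\ge q$) gives $\epsilon^{\theta(p,p)}\le \epsilon^{\theta(p,q)}$, so again the maximum reduces to $\epsilon^{\theta(p,q)}$, and part (ii) follows directly.

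Since all the analytic work — Sobolev's inequality on $\partial D_R$, the duality/boundedness properties of $\smooth{R}{\epsilon}$, $\modi{R}$, $\dualsmooth{R}{\epsilon}$, and the boundary regularity assumptions \eqref{y01c}--\eqref{y01d} — has already been absorbed into the proofs of \cref{y11,y10}, and since \cref{d11} is a routine calculation from the definition \eqref{y22} of $\theta$, there is no real obstacle here. The only thing to watch is that the exponent comparison in \cref{d11} must go in the correct direction: one needs the \emph{smaller} of the two competing $\theta$-values to be $\theta(p,q)$ (so that when raised to $\epsilon<1$, $\theta(p,q)$ yields the larger factor and dominates in the max); this is precisely what the condition $1/p+1/q\le 2/(d-1)$ ensures via the linear computations carried out in the proof of \cref{d11}.
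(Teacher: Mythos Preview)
Your proposal is correct and matches the paper's approach exactly: the paper simply states ``Combining \cref{y03,y10,d11} we obtain \cref{y26}'' (where the reference to \cref{y03} is evidently a typo for \cref{y11}), and you have spelled out precisely how those three lemmas combine. Your observation that $4c^6\le 8c^8$ to absorb the Dirichlet constant into the stated one is the only additional detail, and it is trivially correct.
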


%\subsection{Energy of the harmonic extension}
\begin{corollary}[Energy of the harmonic extensions]\label{y15}
Assume \cref{b47b}, assume that ${1}/{p}+{1}/{q}\leq {2}/{(d-1)}$, and
let
$R\in \N$, $u\colon\partial D_R\to\R  $, $\epsilon\in (0,\epsilon_0)$,
 $\omega\in\Omega$. Then
\begin{enumerate}[i)]
\item\label{y15a} it holds in the case $q\geq p$ that 
$
\avnorm{\omegaH(\nablaBar (\DirichletExt{R}{\epsilon}u) )^2}{1}{E_R}\leq 2c^6 \overline{\Lambda}(u) $ and
\item\label{y15b} it holds in the case $p\geq q$ that 
$
\avnorm{\omegaH(\nablaBar (\NeumannExt{R}{\epsilon}u))^2}{1}{E_R}
\leq 8c^{12} \overline{\Lambda}(u)$.
\end{enumerate}
\end{corollary}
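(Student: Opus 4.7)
The plan is to turn the bulk energy $\avnorm{\omegaH(\nablaBar v)^2}{1}{E_R}$ into a boundary integral via discrete partial integration (\cref{b66}), apply Hölder with the exponents appearing in $\overline{\Lambda}$, and then close via the Sobolev inequality \eqref{y22a}, the boundary regularity \eqref{y01c}, and the smoothing/duality estimates compiled in \eqref{d06c} and \cref{y27}. The case dichotomy $q\ge p$ vs.\ $p\ge q$ dictates which harmonic extension to use and which pair of conjugate exponents to take.

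Set $v=\DirichletExt{R}{\epsilon}u$ (for (i)) or $v=\NeumannExt{R}{\epsilon}u$ (for (ii)). Since $v$ is $\omegaH$-harmonic on $D_R$, \cref{b66} with $w=v$, $g(x,y)=(\omegaH\nablaBar v)_{xy}$, $h=\omegaH\nabla v$, $f\equiv 0$ yields
\begin{equation*}
\sum_{(x,y)\in E_R}\omegaH(\nablaBar v)^2_{xy}=2\sum_{y\in\tilde\partial D_R}v(y)\,(\omegaH\nablaBar v)_{\funcNu{R}(y)}.
\end{equation*}
Applying the same lemma with $w\equiv 1$ forces $\sum_{y\in\tilde\partial D_R}(\omegaH\nablaBar v)_{\funcNu{R}(y)}=0$, so $v(y)$ may be replaced by $v(y)-a$ for any $a\in\R$. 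Combining this with $|\tilde\partial D_R|/|E_R|\le c/R$, $|\partial D_R|/|\tilde\partial D_R|\le c$, and the bijection $\funcNu{R}\colon\tilde\partial D_R\to\enor{R}$ (so averaged norms of $(\cdot)_{\funcNu{R}}$ on $\tilde\partial D_R$ equal those of the original quantity on $\enor{R}$), I get, for every pair $(r,s)$ of conjugate exponents,
\begin{equation*}
\avnorm{\omegaH(\nablaBar v)^2}{1}{E_R}\le \frac{2c}{R}\,\Big[\inf_{a\in\R}\avnorm{v-a}{\frac{2r}{r-1}}{\partial D_R}\Big]\,\avnorm{\omegaH\nablaBar v}{\frac{2r}{r+1}}{\enor{R}}.
\end{equation*}

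For (i), take $r=q$. Since $v|_{\partial D_R}=\smooth{R}{\epsilon}u$, the Sobolev inequality \eqref{y22a} bounds the first factor by $cR\,\avnorm{\nablaBar(\smooth{R}{\epsilon}u)}{\alpha(q)}{\etan{R}}$. The moment assumption $1/p+1/q\le 2/(d-1)$ together with $q\ge p$ forces $q\ge d-1$ when $d\ge 3$ (and the $d=2$ case is trivial since $\alpha(q)=1\le 2q/(q+1)$), so $\alpha(q)\le 2q/(q+1)$; Jensen and the smoothing estimate \eqref{d06c} then give a bound of order $R\,\overline{\Lambda}(u)^{1/2}$. For the flux factor, $\omegaH\le c$, the boundary regularity \eqref{y01c} (normal controlled by tangential), the identification $\nablaBar v|_{\etan{R}}=\nablaBar(\smooth{R}{\epsilon}u)$, and \eqref{d06c} yield a bound of order $\overline{\Lambda}(u)^{1/2}$. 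The product closes (i).

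For (ii), take $r=p$. The flux factor is now read off directly from the Neumann data: the triangle inequality separates the mean-correction in \eqref{y01}, and \cref{y27} gives $\avnorm{\omegaH\nablaBar v}{2p/(p+1)}{\enor{R}}\lesssim \avnorm{\omega\nablaBar u}{2p/(p+1)}{\enor{R}}\le \overline{\Lambda}(u)^{1/2}$. The oscillation factor is handled by Sobolev \eqref{y22a} with $\alpha(p)\le 2p/(p+1)$ (which follows from $p\ge q$ and the moment bound, giving $p\ge d-1$), followed by \eqref{y01c} in the direction ``tangential by normal'' and $\omegaH^{-1}\le c$, which convert $\avnorm{\nablaBar v}{2p/(p+1)}{\etan{R}}$ into the flux quantity already estimated. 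The main technical point is exactly this verification that $\alpha(r)\le 2r/(r+1)$: it is precisely what allows the Sobolev exponent produced on the left-hand side to be matched by the norm appearing in $\overline{\Lambda}$, and it is why the Dirichlet/Neumann dichotomy is tied to which of $p,q$ is larger.
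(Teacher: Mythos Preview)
Your proof is correct and follows essentially the same scheme as the paper: integration by parts via \cref{b66}, H\"older on the boundary, Sobolev \eqref{y22a}, boundary regularity \eqref{y01c}, and the smoothing/duality bounds. The only variation is that in part (ii) you apply H\"older with the pair $\bigl(\tfrac{2p}{p-1},\tfrac{2p}{p+1}\bigr)$ and Sobolev with $\alpha(p)$, whereas the paper uses $\bigl(\tfrac{2q}{q-1},\tfrac{2q}{q+1}\bigr)$ and $\alpha(q)$ in \emph{both} cases to obtain the unified mixed bound $\avnorm{\omegaH(\nablaBar v)^2}{1}{E_R}\le 2c^3\avnorm{\nablaBar v}{2q/(q+1)}{\enor{R}}\avnorm{\nablaBar v}{2p/(p+1)}{\etan{R}}$, postponing the case distinction to a Jensen step afterward; the key exponent check in the paper is $\alpha(q)\le\tfrac{2p}{p+1}$ rather than your $\alpha(p)\le\tfrac{2p}{p+1}$ and $\alpha(q)\le\tfrac{2q}{q+1}$, but all three follow from $1/p+1/q\le 2/(d-1)$ and the two routes are equivalent.
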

%%%%%%%%%%%%%%%%%%%%%%%%%%%
\begin{proof}[Proof of \cref{y15}]First, note that
\eqref{y20} and the assumption that $1/p+1/q\leq 2/(d-1)$ imply that
\begin{align}\label{y14b}
\frac{1}{\alpha(q)}=\left[\left(\frac{q-1}{2q}+\frac{1}{d-1}\right)\1_{d\geq 3}+\1_{d=2}\right]\geq \frac{p+1}{2p}\quad\text{and}\quad \alpha(q)\leq \frac{2p}{p+1}.
\end{align}

\cref{b66} (with $w\defeq v-a$, $g\defeq \omegaH\nabla v$, $f\defeq 0$ for $a\in\R$, $v\in \{\DirichletExt{R}{\epsilon}u, \NeumannExt{R}{\epsilon}u\}$), H\"older's inequality,
the assumption that 
$|\tilde{\partial} D_R|/|E_R|\leq c/R$,
the assumption  that $c^{-1}\leq \omegaH\leq c$, Sobolev's inequality in \eqref{y22a}, and Jensen's inequality (combined with \eqref{y14b}) imply that for all $v\in \{\DirichletExt{R}{\epsilon}u, \NeumannExt{R}{\epsilon}u\}$ it holds that
\begin{align}\begin{split}
&\avnorm{\omegaH(\nablaBar v)^2}{1}{E_R}
= 
\inf_{a\in\R}\left[
\frac{|\tilde{\partial}D_R|}{|E_R|}
\frac{2}{|\tilde{\partial}D_R|}
\left|\sum_{x\in \tilde \partial D_R} (\omegaH
 \nablaBar v)_{\funcNu{R}(x)}(v(x)-a)\right|\right]\\
&
\leq 2cR^{-1}\avnorm{\omegaH\circ\funcNu{R}}{\infty}{\tilde{\partial}D_R}
\avnorm{\nablaBar_\funcNu{R} v}{\frac{2q}{q+1}}{\tilde{\partial}D_R}
\left[
\inf_{a\in\R}\avnorm{v-a}{\frac{2q}{q-1}}{\tilde\partial D_R}\right]\\
&
\leq 2c^2R^{-1}
\avnorm{\nablaBar v}{\frac{2q}{q+1}}{\enor{R}}cR
\avnorm{\nablaBar v}{\alpha(q)}{\etan{R}}
\leq 2c^3
\avnorm{\nablaBar v}{\frac{2q}{q+1}}{\enor{R}}
\avnorm{\nablaBar v}{\frac{2p}{p+1}}{\etan{R}}.\end{split}\label{d14}
\end{align}
Furthermore, 
\eqref{y01c} and Jensen's inequality (combined with the fact that 
$(1,\infty)\ni x\mapsto 2x/(x+1) $ is non-decreasing) prove that in the case $q\geq p $ it holds that
\begin{align}
\avnorm{\nablaBar (\DirichletExt{R}{\epsilon}u)}{\frac{2q}{q+1}}{\enor{R}}
\avnorm{\nablaBar( \DirichletExt{R}{\epsilon}u)}{\frac{2p}{p+1}}{\etan{R}}
\leq c
\avnorm{\nablaBar (\DirichletExt{R}{\epsilon}u)}{\frac{2q}{q+1}}{\etan{R}}^2
\leq c^3 \avnorm{\nablaBar u}{\frac{2q}{q+1}}{\etan{R}}^2\leq c^3\overline{\Lambda}(u).
\end{align}
This and \cref{d14} imply  in the case  $q\geq p$ that
$
\avnorm{\omegaH(\nablaBar (\DirichletExt{R}{\epsilon}u) )^2}{1}{E_R}\leq 2c^3 c^3\overline{\Lambda}(u)= 2c^6R \overline{\Lambda}(u)$. This shows \cref{y15a}.
In addition, the boundary regularity in
\eqref{y01c}, Jensen's inequality (combined with the fact that 
$(1,\infty)\ni x\mapsto 2x/(x+1) $ is non-decreasing), and the assumption that $c^{-1}\leq \omegaH\leq c$ prove  in the case $p\geq q $  that
\begin{align}
\avnorm{\nablaBar (\NeumannExt{R}{\epsilon}u)}{\frac{2q}{q+1}}{\enor{R}}
\avnorm{\nablaBar( \NeumannExt{R}{\epsilon}u)}{\frac{2p}{p+1}}{\etan{R}}
\leq c
\avnorm{\nablaBar (\NeumannExt{R}{\epsilon}u)}{\frac{2p}{p+1}}{\enor{R}}^2
\leq  c^{3}
\avnorm{\omegaH\nablaBar (\NeumannExt{R}{\epsilon}u)}{\frac{2p}{p+1}}{\enor{R}}^2.\label{d15}
\end{align}
Furthermore, \eqref{y01} (combined with the triangle inequality and Jensen's inequality), \cref{y27}, and \eqref{y05} demonstrate that
\begin{align}\begin{split}
\avnorm{\omegaH\nablaBar (\NeumannExt{R}{\epsilon}u)}{\frac{2p}{p+1}}{\enor{R}}
&= \avnorm{(\omegaH\nablaBar (\NeumannExt{R}{\epsilon}u))_ \funcNu{R}}{\frac{2p}{p+1}}{\tilde{\partial}D_R}
\leq 
2\avnorm{\dualsmooth{R}{\epsilon}((\omega \nablaBar u)_ \funcNu{R} )}{\frac{2p}{p+1}}{\tilde{\partial}D_R}\\
&\leq 2c ^3
\avnorm{(\omega \nablaBar u)_ \funcNu{R} }{\frac{2p}{p+1}}{\tilde{\partial}D_R}
= 2c ^3
\avnorm{\omega \nablaBar u }{\frac{2p}{p+1}}{\tilde{\partial}D_R}\leq 2c^3(\overline{\Lambda}(u))^{\nicefrac{1}{2}}.
\end{split}\end{align}
This, \eqref{d14}, and \eqref{d15} imply in the case $p\geq q$ that
$
\avnorm{\omegaH(\nablaBar (\NeumannExt{R}{\epsilon}u))^2}{1}{E_R}
\leq 2c^3 c^3 (2c^3)^2\overline{\Lambda}(u)= 8c^{12} R \overline{\Lambda}(u)$. This shows \cref{y15b}. 
 The proof of \cref{y15} is thus completed.
\end{proof}

\subsection{The annulus term}

\begin{lemma}\label{y25}
Assume \cref{b47b}. Then 
\begin{enumerate}[i)]
\item it holds in the case $q\geq p$ that 
$\avnorm{\nablaBar (\DirichletExt{R}{\epsilon}u)}{\max\left\{\frac{4p}{p-1},
\frac{ 4q}{q-1}\right\}}{E_R}
\leq 
c \epsilon^{-(d-1)\frac{q+1}{2q} }
(\overline{\Lambda}(u))^{\nicefrac{1}{2}}$ and
\item it holds in the case $p\geq q$ that 
$
\avnorm{\nablaBar (\NeumannExt{R}{\epsilon}u)}{\max\left\{\frac{4p}{p-1},
\frac{ 4q}{q-1}\right\}}{E_R}
\leq2c^4\epsilon ^{-(d-1)\frac{p+1}{2p}} (\overline{\Lambda}(u))^{\nicefrac{1}{2}}$.
\end{enumerate}
\end{lemma}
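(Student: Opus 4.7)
Both assertions follow a common template: use the boundary regularity assumption \eqref{y01d} to pass from the $L^M$ energy on $E_R$ (with $M:=\max\{4p/(p-1),4q/(q-1)\}$) to a norm on the boundary layer $\etan{R}$ (Dirichlet) or $\enor{R}$ (Neumann), exchange the harmonic extension for its prescribed boundary data via \eqref{y01b} or \eqref{y01}, and finally invoke a high-to-low scaling estimate on $\smooth{R}{\epsilon}$ or $\dualsmooth{R}{\epsilon}$.

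For the Dirichlet case, since $r\mapsto 4r/(r-1)$ is decreasing and $q\ge p$, $M=4p/(p-1)$. Applying \eqref{y01d} combined with $\nablaBar\DirichletExt{R}{\epsilon}u=\nablaBar\smooth{R}{\epsilon}u$ on $\etan{R}$ (the latter from \eqref{y01b}) gives $\avnorm{\nablaBar\DirichletExt{R}{\epsilon}u}{M}{E_R}\le c\avnorm{\nablaBar\smooth{R}{\epsilon}u}{M}{\etan{R}}$. I would then apply the general Scott--Zhang scaling from \cref{a48a}, namely \eqref{a49}, with input exponent $s=2q/(q+1)$ and output $r=M$. The resulting prefactor $\epsilon^{-(d-1)\left(\frac{q+1}{2q}-\frac{p-1}{4p}\right)}$ is bounded by $\epsilon^{-(d-1)(q+1)/(2q)}$ because $(p-1)/(4p)>0$ and $\epsilon<1$, while $\avnorm{\nablaBar u}{2q/(q+1)}{\etan{R}}\le(\overline\Lambda(u))^{1/2}$ by \eqref{y05}.

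For the Neumann case, $p\ge q$ forces $M=4q/(q-1)$. Here \eqref{y01d} reduces the estimate to $\enor{R}$, and the Neumann equation \eqref{y01} together with the bijection $\funcNu{R}\colon\tilde\partial D_R\to\enor{R}$ writes $(\omegaH\nablaBar\NeumannExt{R}{\epsilon}u)\circ\funcNu{R}$ as $\dualsmooth{R}{\epsilon}(\omega_{\funcNu{R}}\nablaBar_{\funcNu{R}}u)$ shifted by its boundary mean. The triangle and Jensen inequalities, together with $c^{-1}\le\omegaH\le c$, then reduce the problem to the dual bound $\avnorm{\dualsmooth{R}{\epsilon}h}{M}{\tilde\partial D_R}\le C\epsilon^{-(d-1)(p+1)/(2p)}\avnorm{h}{2p/(p+1)}{\tilde\partial D_R}$. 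Mimicking the duality template of \cref{y27} and \cref{d09}, I would pair $\dualsmooth{R}{\epsilon}h$ against a test function $g$ on $\tilde\partial D_R$ via \cref{d06}, apply H\"older at exponent $2p/(p-1)$, and then invoke the listed inequality $\avnorm{\smooth{R}{\epsilon}v}{2p/(p-1)}{\partial D_R}\le\epsilon^{-(d-1)(p+1)/(2p)}\avnorm{v}{1}{\partial D_R}$ applied to $v=\modi{R}g$, together with the $L^1$ bound on $\modi{R}$ from \eqref{d06c} and Jensen's inequality $\avnorm{g}{1}{\tilde\partial D_R}\le\avnorm{g}{M^*}{\tilde\partial D_R}$ (valid since $M^*=4q/(3q+1)\ge 1$). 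Taking $h=\omega_{\funcNu{R}}\nablaBar_{\funcNu{R}}u$ and combining with $\avnorm{\omega\nablaBar u}{2p/(p+1)}{\enor{R}}\le(\overline\Lambda(u))^{1/2}$ from \eqref{y05} concludes the proof.

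The main obstacle is the Dirichlet case: the required $L^{2q/(q+1)}\to L^M$ estimate for $\nablaBar\smooth{R}{\epsilon}u$ is not among the specific instances listed in \eqref{d06c}, and any attempt to piece together only the listed inequalities (for example combining $\avnorm{\smooth{R}{\epsilon}v}{\infty}{\partial D_R}\le\epsilon^{-(d-1)(q+1)/(2q)}\avnorm{v}{2q/(q+1)}{\partial D_R}$ with the crude pointwise bound $|\nablaBar f|\le 2\avnorm{f}{\infty}{\partial D_R}$) introduces a spurious factor of $R$ after Poincar\'e--Sobolev. The cleanest resolution is to read $\smooth{R}{\epsilon}$ as the concrete Scott--Zhang operator of \cref{a48a}, for which the full scaling \eqref{a49} on gradients is built in; the Neumann case, by contrast, can be handled purely with the inequalities already in \eqref{d06c}.
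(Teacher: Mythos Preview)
Your approach matches the paper's in outline: apply \eqref{y01d}, identify boundary data via \eqref{y01b} or \eqref{y01}, and use a high-to-low smoothing estimate. Two remarks on the details.

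In the Dirichlet case the paper does exactly what you propose, but with a small simplification: it first passes from the $L^M$-norm on $\etan{R}$ to the $L^\infty$-norm by Jensen, and then uses the gradient instance of \eqref{a49} with $s=2q/(q+1)$ and $r=\infty$, which gives the clean exponent $\epsilon^{-(d-1)(q+1)/(2q)}$ directly rather than after a further throw-away. Your observation that this gradient scaling is not among the finitely many inequalities listed in \eqref{d06c} is entirely correct; the paper's own proof cites only the function-level bound $\avnorm{\smooth{R}{\epsilon}v}{\infty}{\partial D_R}\le\epsilon^{-(d-1)(q+1)/(2q)}\avnorm{v}{2q/(q+1)}{\partial D_R}$ from \eqref{d06c} but silently applies it at the gradient level, which is legitimate only because $\smooth{R}{\epsilon}$ is the concrete operator of \cref{a48a} and hence satisfies \eqref{a49} for all exponent pairs. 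So your diagnosis and resolution are the right ones.

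In the Neumann case you work harder than needed. The paper again passes to $L^\infty$ on $\tilde\partial D_R$ by Jensen and then invokes \cref{d09} directly, which already gives
\[
\avnorm{\dualsmooth{R}{\epsilon}h}{\infty}{\tilde\partial D_R}\le c^2\epsilon^{-(d-1)(p+1)/(2p)}\avnorm{h}{2p/(p+1)}{\tilde\partial D_R}.
\]
There is no need to re-derive a dual estimate at the finite exponent $M$ via \cref{d06}; \cref{d09} is ready-made, and together with the triangle/Jensen inequalities applied to \eqref{y01} and the bound $c^{-1}\le\omegaH\le c$ it yields the stated constant $2c^4$ in one line.
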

\begin{proof}[Proof of \cref{y25}]
First,
\eqref{y01d}, the assumption that $\forall x\in \partial D_R\colon   (\DirichletExt{R}{\epsilon}u)(x)=(\smooth{R}{\epsilon}u )(x)$ in \eqref{y01b}, 
the assumption that $\avnorm{\smooth{R}{\epsilon}u}{\infty}{\partial D_R}\leq \epsilon^{-(d-1)\frac{q+1}{2q}}
\avnorm{u}{\frac{2q}{q+1}}{\partial D_R}$ in \eqref{d06c},
and \eqref{y05} imply that 
\begin{align}\begin{split}
&\avnorm{\nablaBar (\DirichletExt{R}{\epsilon}u)}{\max\left\{\frac{4p}{p-1},
\frac{ 4q}{q-1}\right\}}{E_R}
\leq c\avnorm{\nablaBar (\DirichletExt{R}{\epsilon}u)}{\max\left\{\frac{4p}{p-1},
\frac{ 4q}{q-1}\right\}}{\etan{R}}
=c\avnorm{\nablaBar (\smooth{R}{\epsilon}u)}{\max\left\{\frac{4p}{p-1},
\frac{ 4q}{q-1}\right\}}{\etan{R}}\\&
\leq c\avnorm{\nablaBar (\smooth{R}{\epsilon}u)}{\infty}{\etan{R}}\leq c \epsilon^{-(d-1)\frac{q+1}{2q} }
\avnorm{\nablaBar u}{\frac{2q}{q+1}}{\etan{R}}\leq 
c \epsilon^{-(d-1)\frac{q+1}{2q} }
(\overline{\Lambda}(u))^{\nicefrac{1}{2}}.
\label{c02a}
\end{split}
\end{align}
Next,
\eqref{y01d}, the assumption that 
$c^{-1}\leq \omegaH\leq c$,
\eqref{y01} (combined with the triangle inequality, Jensen's inequality, and \cref{d09}),  and \eqref{y05}
 show that 
\begin{align}\begin{split}
&\avnorm{\nablaBar (\NeumannExt{R}{\epsilon}u)}{\max\left\{\frac{4p}{p-1},
\frac{4q}{q-1}\right\}}{E_R}
\leq c \avnorm{\nablaBar (\NeumannExt{R}{\epsilon}u)}{\max\left\{\frac{4p}{p-1},
\frac{4q}{q-1}\right\}}{\enor{R}}\\
&
\leq 
c^2 \avnorm{\omegaH\nablaBar (\NeumannExt{R}{\epsilon}u)}{\max\left\{\frac{4p}{p-1},
\frac{ 4q}{q-1}\right\}}{\enor{R}}
\leq c^2
\avnorm{(\omegaH \nablaBar (\NeumannExt{R}{\epsilon}u)_\funcNu{R}}{\infty}{\tilde\partial D_R}\\
&\leq 2c^4\epsilon^{-(d-1)\frac{p+1}{2p}}
\avnorm{(\omega \nablaBar u)_\funcNu{R} }{\frac{2p}{p+1}}{\tilde{\partial}D_R}\leq2c^4\epsilon ^{-(d-1)\frac{p+1}{2p}} (\overline{\Lambda}(u))^{\nicefrac{1}{2}}.
\label{c02b}
\end{split}
\end{align}
The proof of \cref{y25} is thus completed.
\end{proof}
\begin{corollary}\label{d16}
Assume \cref{b47b}, 
let $\Lambda\in [0,\infty)$ be  given by 
$
\Lambda=\avnorm{\omega}{p}{E_R}+
\avnorm{\omega^{-1}}{q}{E_R}$,
let $u\colon\overline{D}_R\to\R$,
let $v\in\{\DirichletExt{R}{\epsilon}u,\NeumannExt{R}{\epsilon}u\} $ be a function which satisfies 
that in the case $q> p$ it holds that $v=\DirichletExt{R}{\epsilon}u $ and 
 in the case $p>q$ it holds that $v=\NeumannExt{R}{\epsilon}u $, and let $\rho \in [1,R/2]\cap\N$ satisfy that $|E_R\setminus E_{R-\rho}|\leq c(\rho/R)|E_R|$. Then it holds that
\begin{align}\begin{split}
&\avnorm{\omega(\nablaBar v)^2\1_{E_R\setminus E_{R-\rho}}}{1}{E_R}
+
\avnorm{\omega^{-1}(\nablaBar v)^2\1_{E_R\setminus E_{R-\rho}}}{1}{E_R}\\&\leq 
8c^9  \left(\frac{\rho}{R}\right)
^{\min\left\{\frac{p-1}{2p},
\frac{q-1}{2q}\right\}}
\epsilon^{-(d-1)\min \left\{\frac{p+1}{p}, \frac{q+1}{q}\right\}} \Lambda\overline{\Lambda}(u).
\end{split}\end{align}
\end{corollary}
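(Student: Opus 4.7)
The plan is to bound each of the two terms by a three-factor Hölder inequality, decomposing the summand $\omega^{\pm 1}(\nablaBar v)_e^{2}\1_A(e)$ (with $A := E_R\setminus E_{R-\rho}$) as the product of the conductance (or its inverse), the squared gradient, and the indicator. Writing $m := \max\{4p/(p-1),\,4q/(q-1)\}$ and choosing Hölder exponents $(r,\,m/2,\,c)$ with $1/r+2/m+1/c=1$, where $r=p$ for the $\omega$-term and $r=q$ for the $\omega^{-1}$-term, one obtains
\begin{align*}
\avnorm{\omega^{\pm 1}(\nablaBar v)^{2}\1_A}{1}{E_R}
\leq \avnorm{\omega^{\pm 1}}{r}{E_R}\,\avnorm{\nablaBar v}{m}{E_R}^{2}\!\left(\frac{|A|}{|E_R|}\right)^{\!1/c}
\leq \Lambda\,\avnorm{\nablaBar v}{m}{E_R}^{2}\,c^{1/c}\!\left(\frac{\rho}{R}\right)^{\!1/c},
\end{align*}
the last step using the volume hypothesis $|A|/|E_R|\leq c\rho/R$.

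The middle factor is then controlled by \cref{y25}. In the Dirichlet case ($q\geq p$, $v=\DirichletExt{R}{\epsilon}u$) it gives $\avnorm{\nablaBar v}{m}{E_R}^{2}\leq c^{2}\epsilon^{-(d-1)(q+1)/q}\overline{\Lambda}(u)$; in the Neumann case ($p\geq q$, $v=\NeumannExt{R}{\epsilon}u$) it gives $\avnorm{\nablaBar v}{m}{E_R}^{2}\leq 4c^{8}\epsilon^{-(d-1)(p+1)/p}\overline{\Lambda}(u)$. Since $(q+1)/q\leq (p+1)/p$ is equivalent to $p\geq q$, the $\epsilon$-exponent in both cases is exactly $-(d-1)\min\{(p+1)/p,\,(q+1)/q\}$, matching the target.

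It remains to check, case by case, that $1/c\geq \min\{(p-1)/(2p),\,(q-1)/(2q)\}$, after which the inequality $(\rho/R)^{1/c}\leq(\rho/R)^{\min}$ (valid since $\rho/R\leq 1/2<1$) yields the claimed exponent in $\rho/R$. A direct computation of $1/c=1-1/r-2/m$ shows that equality holds in the two \emph{matched} subcases — Dirichlet with $r=p$ giving $1/c=(p-1)/(2p)$, and Neumann with $r=q$ giving $1/c=(q-1)/(2q)$ — whereas in each of the two \emph{mismatched} subcases the strict inequality $1/c>\min\{(p-1)/(2p),\,(q-1)/(2q)\}$ reduces to the hypothesis of the corresponding case ($q\geq p$ and $p\geq q$ respectively); for instance, in the Dirichlet $r=q$ subcase, $1/c-(p-1)/(2p)=(q-1)/q-(p-1)/p\geq 0$ iff $q\geq p$.

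The main obstacle is purely bookkeeping: one must verify that all Hölder exponents $(r,\,m/2,\,c)$ lie in $[1,\infty]$ for every $p,q\in(1,\infty]$ (which follows at once from $p,q>1$ together with $m/2\geq 1$ and $1/c\in[0,1]$ in each subcase), and then sum the $\omega$- and $\omega^{-1}$-contributions while absorbing the resulting constants (at worst $4c^{9}$ per term, from the Neumann estimate) to fit under the stated bound $8c^{9}$.
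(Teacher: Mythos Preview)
Your proof is correct and follows essentially the same route as the paper: apply a three-factor H\"older inequality with exponents $(r,\,m/2,\,\text{leftover})$ for $r\in\{p,q\}$ and $m=\max\{4p/(p-1),4q/(q-1)\}$, invoke \cref{y25} for the gradient factor, use the volume hypothesis on the indicator, and then relax the exponent of $\rho/R$ to the minimum via $\rho/R<1$. Your case-by-case verification of $1/c\geq\min\{(p-1)/(2p),(q-1)/(2q)\}$ is slightly more explicit than the paper's, but the argument and constants match.
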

\begin{proof}[Proof of \cref{d16}]\cref{y25} combined with the assumption on $v$ and a simple case distinction implies that
\begin{align}
\avnorm{(\nablaBar v)^2}{\max\left\{\frac{2p}{p-1},
\frac{ 2q}{q-1}\right\}}{E_R}=\avnorm{\nablaBar v}{\max\left\{\frac{4p}{p-1},
\frac{ 4q}{q-1}\right\}}{E_R}^2\leq 
4c^8
\epsilon^{-(d-1)\min\left\{\frac{p+1}{p}, \frac{q+1}{q}\right\}} \overline{\Lambda}(u).\end{align}
H\"older inequality, the assumption that
$|E_R\setminus E_{R-\rho}|\leq c(\rho/R)|E_R|$, 
and the assumption that $c\geq 1$ hence show that
\begin{align}\begin{split}
\avnorm{\omega(\nablaBar v)^2\1_{E_R\setminus E_{R-\rho}}}{1}{E_R}
&\leq \avnorm{\omega}{p}{E_R}
\left(\frac{|E_R\setminus E_{R-\rho}|}{|E_R|}\right)^{1-\frac{1}{p}-\min\left\{\frac{p-1}{2p},
\frac{q-1}{2q}\right\}}
\avnorm{(\nablaBar v)^2}{\max\left\{\frac{2p}{p-1},
\frac{2q}{q-1}\right\}}{E_R}\\
&\leq \Lambda c \left(\frac{\rho}{R}\right)
^{\min\left\{\frac{p-1}{2p},
\frac{q-1}{2q}\right\}}
4c^8
\epsilon^{-(d-1)\min\left\{\frac{p+1}{p}, \frac{q+1}{q}\right\}} 
\overline{\Lambda}(u)
\end{split}\end{align}
and
\begin{align}\begin{split}
\avnorm{\omega^{-1}(\nablaBar v)^2\1_{E_R\setminus E_{R-\rho}}}{1}{E_R}
&\leq \avnorm{\omega^{-1}}{q}{E_R}
\left(\frac{|E_R\setminus E_{R-\rho}|}{|E_R|}\right)^{1-\frac{1}{p}-\min\left\{\frac{p-1}{2p},\frac{q-1}{2q}\right\}}
\avnorm{(\nablaBar v)^2}{\max\left\{\frac{2p}{p-1},
\frac{2q}{q-1}\right\}}{E_R}\\&\leq \Lambda c \left(\frac{\rho}{R}\right)
^{\min\left\{\frac{p-1}{2p},
\frac{q-1}{2q}\right\}}
4c^8
\epsilon^{-(d-1)\min\left\{\frac{p+1}{p}, \frac{q+1}{q}\right\}} \overline{\Lambda}(u).
\end{split}\end{align}
This completes the proof of \cref{d16}.
\end{proof}

\small{\bibliography{literature}{}
\bibliographystyle{plain}}
%%%%%%%%%%%%%%%%%%
\end{document}